\numberwithin{equation}{section}
\def\p{\partial}
\def\cH{{\cal H}}
\def\C{{\mathbb C}}
\def\C{{\mathfrak C}}
\def\cH{{\mathcal H}}
\newtheorem{prop}{Proposition}[section]
\newtheorem{theo}[prop]{Theorem}
\newtheorem{lemma}[prop]{Lemma}
\newtheorem{cor}[prop]{Corollary}
\newtheorem{rem}[prop]{Remark}
\newtheorem{defi}[prop]{Definition}
\let\lra=\longrightarrow
\def\mapright\#1{\,\smash{\mathop{\lra}\limits^{\#1}}\,}
\begin{document}
\bibliographystyle{plain}
\title{The space of volume forms}
\author{Xiuxiong Chen;
\; Weiyong He}

\date{}
\maketitle 
\abstract{Donaldson introduced an interesting geometric structure (the Donaldson metric) on the space of volume forms for any compact Riemannian manifold, which has nonpositive sectional curvature formally. The geodesic equation and its perturbed equations are  fully nonlinear elliptic equations .  These equations are also equivalent to two free boundary problems of the Laplacian equation and it also has relationship with many interesting problems, such as Nahm's equation. In this paper we solve these equations and demonstrate the geometric structure of the space of volume forms; in particular, we show that the space of volume forms with the Donaldson metric is a metric space with non-positive curvature in Alexanderov sense.}

\tableofcontents

\section{Introduction}

On any compact K\"ahler manifold, Mabuchi \cite{Mabuchi87}, Semmes
\cite{Semmes96}, and Donaldson \cite{Donaldson1997} introduced a
Weil-Peterson type metric in the space of K\"ahler metrics and proved
 that it is a formally non-positively curved symmetric space of
``non-compact" type. According to \cite{Semmes96}, the geodesic
equation can be transformed into a homogenous complex Monge-Ampere
equation. In \cite{Donaldson1997}, Donaldson proposed an ambitious
program relating the geometry of this infinite dimensional space
to the core problems in K\"ahler geometry, such as the uniqueness
and the existence problems for constant scalar curvature K\"ahler
metrics and  its relation to the stability of the underlying
polarization. In \cite{Chen2000}, the first author  solved the geodesic equation in $C^{1, 1}$ sense with intriguing
applications in K\"ahler geometry. This provides somewhat
technical foundation to this ambitious program of Donaldson.  Exciting
progress is achieved in the last few years in that subject and the readers are
encouraged to read \cite{Donaldson2002, Arezzotian2003, Donaldson2005, Chen2005}...  for further
references in that subject.\\

Is there a corresponding story on Riemannian side?  After all,  in
complex dimension 1, complex and real geometry coincide.  Perhaps
with a bit more imagination, this ``dual" or ``companion version"
of Donaldson's original program might also have important
applications in Riemannian geometry.   In a recent paper
\cite{Donaldson2007}, Donaldson tells a potentially exciting story
in the space of volume forms, more or less, mirror to his program
in K\"ahler geometry. It of courses comes with  twists of new
ideas. He pointed out that the existence problem for geodesic
segment  is related to  a few renowned problems in PDE such as
regularity for some free boundary problems, Nahm's equation etc. From
PDE point of view, the geodesic equation is similar to its
analogues equation in K\"ahler setting, but has difference in a significant way. \\

Let us be more specific. In \cite{Donaldson2007}, Donaldson
introduced a Weil-Peterson type metric on the space of volume
forms (normalized) on any Riemannian manifold $(X, g)$ with fixed
total volume. This infinite dimension space can be parameterized
by smooth functions such that
\[
{\cal H} = \{\phi\in  C^\infty (X): 1 + \triangle_g \phi > 0\}.
\]
This is a locally Euclidean space.  The tangent space is exactly
$C^\infty(X)$, up to addition of a constant. For any $\delta \phi\in T_\phi\cH$, 
the metric is given by
\[
\|\delta \phi\|_\phi^2 =  \int_X\; (\delta \phi)^2 (1 +
\triangle_g \phi) dg.
\]
The energy function on a path $\Phi(t): [0, 1]\rightarrow \cH$ is
defined as
\[
E(\Phi(t)=\int_0^1\int_X|\dot \Phi|^2(1+\triangle \Phi)dg.
\]
Then the geodesic equation is
\begin{equation}\label{E-1-1}
\Phi_{tt}(1+\triangle \Phi) -|\nabla \Phi_t|^2_g = 0.
\end{equation}
This is a fully nonlinear degenerated elliptic equation. To approach this
equation, Donaldson introduced a perturbed equation of the geodesic
equation
\begin{equation}\label{E-1-2}
\Phi_{tt}(1+\triangle \Phi) -|\nabla \Phi_t|^2_g =\epsilon,
\end{equation}
for any $\epsilon>0.$ The equation (\ref{E-1-2}) can be also
formulated  as the other two equivalent free boundary problems
according to \cite{Donaldson2007}.\\

As usual, the geodesic equation (\ref{E-1-1}) tells exactly how to
define the Levi-Civita connection in $\cal H.\;$ Donaldson showed
that   $\cal H$ is formally a non-positively curved  space. Donaldson asked
if there exists a smooth geodesic segment  between any two points in $\cH$. In this paper, we give a partial
answer to this question.
\begin{theo}\label{T-1-1}For any two points $\phi_0, \phi_1 \in \cH$ and any $\epsilon>0$, there
exists a smooth solution of (\ref{E-1-2}), $\Phi(t): [0,
1]\rightarrow \cH$ which connects $\phi_0, \phi_1$.
\end{theo}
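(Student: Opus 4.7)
My plan is to solve (\ref{E-1-2}) by the continuity method. Rewriting the equation as $F(\Phi)=\epsilon$ with
$$F(\Phi) := \Phi_{tt}(1+\triangle\Phi)-|\nabla\Phi_t|^{2},$$
I would introduce for $s\in[0,1]$ the family of Dirichlet problems on $[0,1]\times X$
\begin{equation*}
(E_s):\qquad F(\Phi)=\epsilon,\qquad \Phi|_{t=0}=s\phi_0,\quad \Phi|_{t=1}=s\phi_1.
\end{equation*}
At $s=0$ the explicit quadratic $\Phi^{(0)}(t,x):=\tfrac{\epsilon}{2}t(t-1)$ solves $(E_0)$ (its spatial gradient and Laplacian vanish, and $\Phi^{(0)}_{tt}=\epsilon$), so the set $S\subset[0,1]$ of smoothly solvable parameters is nonempty. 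The theorem follows once $S$ is shown to be both open and closed in $[0,1]$.

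For openness I would linearize $F$ at a smooth solution $\Phi$ to obtain
$$L_\Phi\psi = (1+\triangle\Phi)\,\psi_{tt} + \Phi_{tt}\,\triangle\psi - 2\langle\nabla\Phi_t,\nabla\psi_t\rangle.$$
The principal symbol of $L_\Phi$ at $(\tau,\xi)\in\RR\oplus T^{*}X$ is the quadratic form $(1+\triangle\Phi)\tau^{2}-2\tau\langle\nabla\Phi_t,\xi\rangle+\Phi_{tt}|\xi|^{2}$; a Schur-complement check shows this is positive definite exactly when $\Phi_{tt}>0$ and $\Phi_{tt}(1+\triangle\Phi)-|\nabla\Phi_t|^{2}>0$, both forced by $F(\Phi)=\epsilon>0$ together with $1+\triangle\Phi>0$ from $\Phi\in\cH$. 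Thus $L_\Phi$ is uniformly elliptic, and since it has no zeroth-order term the strong maximum principle kills the kernel under vanishing Dirichlet data; combined with Fredholm theory this makes $L_\Phi$ an isomorphism of the appropriate H\"older spaces, and the implicit function theorem yields openness of $S$.

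Closedness reduces to uniform a priori $C^{k,\alpha}$-bounds along a convergent sequence $s_n\to s_\infty$ in $S$. I would establish these in the usual order: (i) a $C^{0}$-bound by comparing $\Phi$ with quadratic-in-$t$ barriers built from the boundary data and $\epsilon$; (ii) $C^{1}$-bounds on $\Phi_t$ and on $\nabla\Phi$, obtained from boundary barriers and then the maximum principle applied to coordinate derivatives of $\Phi$ (each of which, after differentiating $F(\Phi)=\epsilon$, satisfies a linear equation of type $L_\Phi$); (iii) the crucial $C^{1,1}$-bound, namely pointwise upper bounds on $\Phi_{tt}$ and on $1+\triangle\Phi$, from which matching positive lower bounds are automatic via $\Phi_{tt}(1+\triangle\Phi)\geq\epsilon$. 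Once $C^{1,1}$ is in hand the equation is uniformly elliptic, and I would upgrade to smoothness by observing that dividing (\ref{E-1-2}) through by $1+\triangle\Phi>0$ recasts it in the equivalent form
$$\Phi_{tt}-H(\nabla\Phi_t,\triangle\Phi)=0,\qquad H(p,s):=\frac{\epsilon+|p|^{2}}{1+s},$$
whose Hessian satisfies
$$D^{2}H(p,s)\bigl[(v,a),(v,a)\bigr]=\tfrac{2}{1+s}\Bigl(\bigl|a(1+s)^{-1}p-v\bigr|^{2}+\epsilon\,a^{2}(1+s)^{-2}\Bigr)\ge 0;$$
thus $H$ is strictly convex, the operator $\Phi_{tt}-H(\nabla\Phi_t,\triangle\Phi)$ is concave in the Hessian variables, Evans--Krylov delivers a $C^{2,\alpha}$-estimate, and Schauder bootstrap produces smoothness.

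The main obstacle is the uniform $C^{1,1}$-bound up to the boundary $\{0,1\}\times X$. Interior $C^{2}$-bounds should follow from standard maximum-principle arguments applied to $\Phi_{tt}$ and to $e^{-K\Phi}(1+\triangle\Phi)$ with $K$ sufficiently large, in the spirit of Yau's second-derivative estimate. The boundary $C^{2}$-estimate is the delicate part: it demands explicit sub- and supersolutions adapted to the slab geometry of $[0,1]\times X$ and to the inhomogeneity $\epsilon>0$, paralleling Chen's treatment of the complex homogeneous Monge-Ampere equation in \cite{Chen2000} but now on the Riemannian side. The \emph{lower} bounds on $\Phi_{tt}$ and $1+\triangle\Phi$, automatic from $F(\Phi)=\epsilon$ once the upper bounds are secured, are what preserves ellipticity as $s_n\to s_\infty$ and lets the bootstrap to smoothness close.
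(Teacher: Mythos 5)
Your continuity path differs from the paper's: you fix the operator $Q$ and scale the boundary data $s\phi_0,s\phi_1$, while the paper fixes the boundary data and interpolates the operator, $P(s,\cdot)=sQ+(1-s)(\partial_{tt}+\triangle)$, starting from the Laplace equation. Your choice is legitimate and arguably cleaner in one respect — along your path, $1+\triangle\Phi>0$ and $\Phi_{tt}>0$ are maintained throughout (so the solutions always lie in $\cH$), whereas along the paper's path the intermediate solutions need not be in $\cH$ and one must instead track the weaker inequality $\Phi_{tt}+1+\triangle\Phi>0$. Your starting solution $\tfrac{\epsilon}{2}t(t-1)$ at $s=0$ is correct, and your concavity argument (dividing by $1+\triangle\Phi$ and checking $D^2H\geq 0$, whose computation I verified) is a neat alternative to the paper's Lemma~\ref{L-3-3} that $\log Q$ is concave; both feed into Evans--Krylov equally well.

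However, there is a genuine gap in your interior $C^2$ plan. You propose to bound $\Phi_{tt}$ and $e^{-K\Phi}(1+\triangle\Phi)$ \emph{separately} by the maximum principle. Bounding $\Phi_{tt}$ alone fails: differentiating $Q(D^2\Phi)=\epsilon$ twice in $t$ gives
\[
dQ(\Phi_{tt})\;=\;2\Phi_{ttk}^2-2\Phi_{ttt}\,\triangle\Phi_t,
\]
and at an interior critical point of $\Phi_{tt}$ one has $\Phi_{ttt}=0$, $\Phi_{ttk}=0$, so the right-hand side vanishes and the maximum principle produces no contradiction; adding a $\lambda t^2$ barrier leaves the uncontrolled term $4\lambda t\,\triangle\Phi_t$ of indefinite sign. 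The paper circumvents this precisely by bounding the \emph{sum} $f=\Phi_{tt}+1+\triangle\Phi$ (with an exponential weight in the general-Ricci case): differentiating the equation in each direction and combining yields the quantities $L=\Phi_{ttk}^2-\Phi_{ttt}\triangle\Phi_t$ and $M=\Phi_{tij}^2-\Phi_{ttk}\triangle\Phi_{,k}$, which are shown to be nonnegative as Schur complements of the first-derivative equations; this nonnegativity is what makes the maximum-principle argument close. A Yau-style bound on $e^{-K\Phi}(1+\triangle\Phi)$ alone might control $\triangle\Phi$ when the Ricci term behaves, but gives nothing on $\Phi_{tt}$, and by the above $\Phi_{tt}$ cannot be handled in isolation. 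You should replace this step by a barrier of the form $f\exp\bigl(\tfrac{b}{2}t^2-b\Phi+A\bigr)$ applied to $f=\Phi_{tt}+1+\triangle\Phi$, as in Section~4 of the paper; with that correction the rest of your plan (boundary barriers for $\Phi_{tk}$ in the spirit of Guan--Spruck, Evans--Krylov, Schauder bootstrap) goes through.
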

In particular, we can prove that the (weak) $C^2$ bound of the
solution is independent of $\inf\epsilon.$ The weak $C^2$ means $\triangle \Phi, \Phi_{tt}, \nabla \Phi_t$ are bounded, while $\nabla^2\Phi$ might not. Hence,

\begin{theo}\label{T-1-2} For any two points $\phi_0,
\phi_0\in \cH$, there exists a weakly $C^{2}$ geodesic segment
$\Phi: [0, 1]\rightarrow \overline{\cH}$  which connects $\phi_0,
\phi_1 \in \cal H,$  where $\overline{\cH}$ is the closure of
$\cH$ under the weak $C^2$ topology. \end{theo}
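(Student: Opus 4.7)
\textbf{Proof proposal for Theorem~\ref{T-1-2}.} The plan is to pass to the limit $\epsilon \to 0$ in the family of smooth solutions produced by Theorem~\ref{T-1-1}, exploiting the variational structure of the geodesic equation to sidestep the nonlinearity in the second derivatives.

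For each $\epsilon > 0$, Theorem~\ref{T-1-1} yields a smooth solution $\Phi^\epsilon : [0,1] \to \cH$ of (\ref{E-1-2}) joining $\phi_0$ and $\phi_1$. By the weak $C^2$ bound asserted immediately after Theorem~\ref{T-1-1}, there exists $C$, independent of $\epsilon$, such that $\|\triangle_g \Phi^\epsilon\|_{L^\infty} + \|\Phi^\epsilon_{tt}\|_{L^\infty} + \|\nabla_g \Phi^\epsilon_t\|_{L^\infty} \leq C$. Combined with the prescribed endpoints, these bounds furnish uniform $C^0$ bounds on $\Phi^\epsilon$ and $\Phi^\epsilon_t$, and since $\Phi^\epsilon_t$ has bounded spatial and temporal derivatives it is uniformly Lipschitz on $X \times [0,1]$. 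Arzel\`a--Ascoli together with Banach--Alaoglu then produces a subsequence $\Phi^{\epsilon_k}$ converging uniformly to some limit $\Phi$, with $\Phi^{\epsilon_k}_t \to \Phi_t$ uniformly, while $\triangle_g \Phi^{\epsilon_k}$, $\Phi^{\epsilon_k}_{tt}$, and $\nabla_g \Phi^{\epsilon_k}_t$ converge weakly-$*$ in $L^\infty$ to the corresponding derivatives of $\Phi$. Weak-$*$ lower semicontinuity of the $L^\infty$ norm ensures that $\Phi$ is weakly $C^2$ with the same bounds, while the pointwise inequality $1 + \triangle_g \Phi^{\epsilon_k} > 0$ passes to $1 + \triangle_g \Phi \geq 0$ a.e.; hence $\Phi \in \overline{\cH}$. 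The uniform convergence preserves the boundary values, so $\Phi(\cdot,0) = \phi_0$ and $\Phi(\cdot,1) = \phi_1$.

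The main obstacle is passing to the limit in (\ref{E-1-2}) itself, because the product $\Phi^\epsilon_{tt}(1 + \triangle_g \Phi^\epsilon)$ and the quadratic expression $|\nabla_g \Phi^\epsilon_t|^2$ are nonlinear in quantities that converge only weakly. To circumvent this I would rewrite the equation in its variational form: multiplying by a test function $\eta$ vanishing at $t = 0, 1$ and integrating by parts, using the fact that the geodesic operator is, up to a factor $-2$, the first variation of the energy $E(\Phi) = \int_0^1\!\int_X \Phi_t^2(1+\triangle_g \Phi)\, dg\, dt$, one gets
\[
\int_0^1\!\!\int_X \bigl[ 2\, \Phi^\epsilon_t\, \eta_t\, (1 + \triangle_g \Phi^\epsilon) + (\Phi^\epsilon_t)^2\, \triangle_g \eta \bigr]\, dg\, dt \;=\; -2\epsilon \int_0^1\!\!\int_X \eta\, dg\, dt.
\]
The decisive point is that this reformulation contains $\Phi^\epsilon$ only through $\Phi^\epsilon_t$ (converging strongly in $C^0$, hence in $L^1$) and through $\triangle_g \Phi^\epsilon$ (converging weakly-$*$ in $L^\infty$); all remaining derivatives have been moved onto the smooth test function $\eta$. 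The standard pairing of strong $L^1$ convergence with weak-$*$ $L^\infty$ convergence lets each term on the left pass to the limit, while the right-hand side tends to zero. The limit $\Phi$ therefore satisfies the weak form of the geodesic equation (\ref{E-1-1}), so $\Phi$ is the desired weakly $C^2$ geodesic segment in $\overline{\cH}$.

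The remaining ingredients are routine: the a priori estimates are imported directly from Theorem~\ref{T-1-1}, the compactness is classical, and the endpoint values together with the sign of $1+\triangle_g\Phi$ are preserved automatically. What is essential is the choice of weak formulation, and the variational one succeeds precisely because the geodesic equation is the Euler--Lagrange equation of a quadratic functional, so its first variation involves only the first time derivative of $\Phi$ and thus tolerates merely weak-$*$ control of the second derivatives in $x$.
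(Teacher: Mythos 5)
Your compactness argument is exactly the one the paper has in mind: Theorem~\ref{T-1-1} supplies smooth solutions $\Phi^\epsilon$ of \eqref{E-1-2}, Theorem~\ref{T-1-4} gives bounds on $|\Phi^\epsilon|_{C^1}$, $\triangle\Phi^\epsilon$, $\Phi^\epsilon_{tt}$, $\nabla\Phi^\epsilon_t$ uniform in $\epsilon$, and Arzel\`a--Ascoli plus weak-$*$ compactness in $L^\infty$ extract the limit. Your treatment of what it means for that limit to be a ``geodesic,'' however, takes a different route from the paper's. The paper never attempts to pass to the limit in the equation itself; instead it sidesteps the nonlinearity by \emph{defining} (Definition 5.1 in Section 5) a weak $C^0$ solution to be precisely a uniform limit of smooth solutions of $Q(D^2\tilde\Phi)=\epsilon$, and all later metric-space arguments (uniqueness, triangle inequality, Alexandrov nonpositivity) are phrased directly through that $\epsilon$-approximating family. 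You instead identify the geodesic operator as $-\tfrac12$ times the Euler--Lagrange operator of $E$, write out the resulting variational weak form
\[
\int_0^1\!\!\int_X \bigl[ 2\,\Phi_t\,\eta_t\,(1+\triangle\Phi) + \Phi_t^2\,\triangle\eta \bigr]\,dg\,dt = 0,
\]
and observe that it tolerates exactly the convergence you have (strong $C^0$ for $\Phi_t$, weak-$*$ $L^\infty$ for $\triangle\Phi$, everything else on the test function). This is correct and gives a cleaner, intrinsic meaning to the limiting equation; the paper's definition-by-approximation is more \emph{ad hoc} but happens to be the form used in Section 5. The two formulations are of course consistent, and the core of Theorem~\ref{T-1-2} --- the uniform weak $C^2$ bound and the compactness --- is identical in both. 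One small point worth noting explicitly: weak-$*$ convergence of $\triangle\Phi^{\epsilon_k}$, $\Phi^{\epsilon_k}_{tt}$ in $L^\infty$ identifies the limits with the distributional derivatives of $\Phi$ (by testing against smooth functions and using the uniform convergence $\Phi^{\epsilon_k}\to\Phi$), and weak-$*$ limits preserve a.e.\ sign constraints; you invoke both, correctly, but a referee would want those two one-line justifications written out.
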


Following  \cite{CC2002}, we
can also prove that
\begin{theo}\label{T-1-3} The infinite dimensional space $\cal H$ is a
non-positively curved metric space in the sense of Alexandrov.
\end{theo}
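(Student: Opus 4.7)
The plan is to follow the strategy of Calabi and the first author in \cite{CC2002} from the K\"ahler setting, using the smooth $\epsilon$-approximate geodesics of Theorem \ref{T-1-1} as a substitute for the weak $C^2$ geodesics of Theorem \ref{T-1-2}. Recall that $\cH$ is non-positively curved in the sense of Alexandrov precisely when, for any three points $\phi_0,\phi_1,\phi_2\in\cH$ and any minimising geodesic $\Phi(t)$ from $\phi_0$ to $\phi_1$, one has
\begin{equation*}
d(\Phi(t),\phi_2)^2 \;\le\; (1-t)\,d(\phi_0,\phi_2)^2 + t\,d(\phi_1,\phi_2)^2 - t(1-t)\,d(\phi_0,\phi_1)^2.
\end{equation*}
The work thus splits into (i) showing that the length functional defines an honest distance, and (ii) proving this convexity.

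For (i), I would define $d(\phi_0,\phi_1)$ as the infimum of $\int_0^1 \|\dot\Phi\|_\Phi\,dt$ over smooth paths in $\cH$. The upper bound $d(\phi_0,\phi_1)^2 \le E(\Phi^\epsilon)$ is immediate from the smooth $\epsilon$-geodesic furnished by Theorem \ref{T-1-1}; combined with the $\epsilon$-independent weak $C^2$ bound of Theorem \ref{T-1-2} the infimum is realised by a weak $C^2$ minimiser. Non-degeneracy ($d(\phi_0,\phi_1)>0$ whenever $\phi_0\neq\phi_1$) is the delicate point, and I would mimic \cite{CC2002}: bound the length of any path from below by an $L^2$-type quantity in $\phi_1-\phi_0$, using that the total mass of $(1+\Delta\phi)\,dg$ is fixed and that $\phi_0\neq\phi_1$ forces $L^2$-separation on some subdomain.

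For (ii), the core step is to perform Donaldson's formal non-positive curvature computation on a doubly-smoothed family. Let $\Phi^\epsilon(t)$ be the smooth $\epsilon$-geodesic from $\phi_0$ to $\phi_1$, and for each $t$ let $\Psi^{\epsilon,\delta}_t(s)$ be a smooth $\delta$-geodesic from $\Phi^\epsilon(t)$ to $\phi_2$, with length $L^{\epsilon,\delta}(t)$. The formal Jacobi calculation of \cite{Donaldson2007} on the two-parameter family $(s,t)\mapsto \Psi^{\epsilon,\delta}_t(s)$ should yield
\begin{equation*}
\frac{d^2}{dt^2}\bl L^{\epsilon,\delta}(t)^2\br \;\ge\; -C(\epsilon+\delta),
\end{equation*}
where $C$ depends only on the weak $C^2$ norms, which by Theorem \ref{T-1-1} are $\epsilon$-independent. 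Integrating twice in $t$, and then letting $\delta\to 0$ and $\epsilon\to 0$, converts this into the sought convexity inequality for $d^2(\Phi(t),\phi_2)$.

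The main obstacle will be establishing the error estimate above with constants that genuinely survive the passage $\epsilon,\delta\to 0$. This forces one to control the transverse derivative $\partial_t\Psi^{\epsilon,\delta}_t$ as the source endpoint $\Phi^\epsilon(t)$ moves, and to express every term arising in the Jacobi computation in quantities bounded in the weak $C^2$ norm, since $\nabla^2\Phi$ and $\nabla^2\Psi$ themselves are not controlled. The degenerate ellipticity of (\ref{E-1-2}) as $\epsilon\to 0$ means that all pointwise computations must be carried out at the smooth $\epsilon$-level and only passed to the limit after the $t$-integration, in complete parallel with the K\"ahler analysis of \cite{CC2002}.
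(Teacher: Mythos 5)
Your overall strategy is in the right spirit and broadly parallels the paper: use the smooth $\epsilon$-geodesics from Theorem \ref{T-1-1}, apply Donaldson's algebraic sectional-curvature formula pointwise along a two-parameter family, and pass to the limit $\epsilon\to 0$ after integrating. But the claimed second-derivative estimate
\[
\frac{d^2}{dt^2}\bl L^{\epsilon,\delta}(t)^2\br \;\ge\; -C(\epsilon+\delta)
\]
is too weak to give the Alexandrov inequality. Integrating it twice and letting $\epsilon,\delta\to 0$ would only produce
\[
d(\Phi(t),\phi_2)^2 \le (1-t)\,d(\phi_0,\phi_2)^2 + t\,d(\phi_1,\phi_2)^2,
\]
i.e.\ mere convexity of the squared distance, with no trace of the critical term $-\,t(1-t)\,d(\phi_0,\phi_1)^2$, which is precisely what encodes non-positive Alexandrov curvature. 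A flat Hilbert space already satisfies your inequality with equality and $C=0$, so there is no way to recover strict comparison from an error term that vanishes in the limit.

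What is actually needed, and what the paper proves as its key Lemma (the super-linear growth of Jacobi fields along $\epsilon$-geodesics, Lemma \ref{L-7-12}), is a \emph{positive} lower bound: writing $Y=\Phi_s$ for the deformation field vanishing at the fixed vertex and $X=\Phi_t$ for the tangent, one shows $\langle D_X D_X Y, Y\rangle\ge 0$ (this uses non-positive sectional curvature \emph{and} the positivity of the geodesic defect $D_XX=\epsilon/(1+\triangle\Phi)$), hence $|Y|$ is convex in $t$ and, since $Y(0)=0$, one gets $\langle Y, D_XY\rangle_{t=1}\ge \langle Y,Y\rangle_{t=1}$. Feeding this back into the second variation of energy gives
\[
\tfrac12\frac{d^2 E}{ds^2}\;\ge\;\langle Y,Y\rangle_{t=1}+\langle X, D_YY\rangle_{t=1}-C\epsilon
\;\ge\;E\bigl(\Phi(1,\cdot,\epsilon)\bigr)-C\epsilon,
\]
where $E(\Phi(1,\cdot,\epsilon))$ approximates $d(\phi_0,\phi_1)^2$. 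This is the source of the $-\,t(1-t)\,d(\phi_0,\phi_1)^2$ term. You should therefore replace your error estimate with this strictly positive lower bound, and include the Jacobi-field convexity argument that produces it. A secondary (technical but necessary) point: you must also verify that your two-parameter family is genuinely smooth jointly in $(t,s)$ with parameter-independent weak $C^2$ bounds, as in Lemma \ref{L-7-7}; this is what justifies interchanging $\partial_t,\partial_s$ and integrating the comparison, since $\nabla^2\Phi$ itself is not controlled.
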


According to Donaldson, the geodesic equation (\ref{E-1-1}) and
the equation (\ref{E-1-2}) are relevant to many other interesting
problems, especially the Nahm's equation. 
Also the space of volume forms is of fundamental interest in many subjects, such as optimal transportation theory.
Theorems  \ref{T-1-1}, \ref{T-1-2} might have some applications in these related subjects.\\

We derive the a priori estimates and use the method of continuity to solve \eqref{E-1-1} and \eqref{E-1-2}. 
For any function $\Phi\in C^2(M\times [0, 1])$, define
\[
Q(D^2\Phi)=\Phi_{tt}(1+\triangle \Phi)-|\nabla \Phi_t|^2.
\]
One can derive 
the a priori estimates for 
\[
Q(D^2\Phi)=f>0.
\]
The estimates can actually  be done; for example, $|\Phi|_{C^1}$ will depend on $|\nabla f^{1/2}|$ (c.f \cite{He09}). 
One can also choose paths as follows. Set
\begin{eqnarray}
P(s, D^2 \Phi)=s\;Q(D^2 \Phi)+(1-s) (\Phi_{tt}+\triangle \Phi).
\end{eqnarray}
We want to solve the following equation for any $s\in [0, 1]$ and
$\epsilon>0$
\begin{equation}\label{E-1-3}
P \left(D^2 \Phi(\cdot, t,s,\epsilon)\right) =
\epsilon,\end{equation}  with the boundary condition
\begin{equation*} \Phi(\cdot, 0, s, \epsilon) = \phi_0, \Phi(\cdot,
1, s, \epsilon) = \phi_1.
\end{equation*}
These kind of paths are also used often to deal with fully nonlinear equations; for example, see \cite{Giltru1998} and \cite{Caca}. 
In our case the right hand side of \eqref{E-1-3} becomes a constant and the dependence  on $|\nabla f^{1/2}|$ is automatically gone. Otherwise two cases are similar ($P$ is just a slight modification of $Q$ as an operator and has the similar properties, see \eqref{concave} for example); for simplicity we shall still use \eqref{E-1-3}. 

When $s=0,$ \eqref{E-1-3} is a standard Laplacian equation and the
Dirichlet problem is always solvable  with respect to any
$\epsilon >  0\;$ and any smooth $\phi_0, \phi_1 \in \cal H.\;$
For simplicity of notations, we always assume that $\epsilon\leq
1$. Otherwise, the estimates depend on $\epsilon$ when $\epsilon$
is large.  The main result  in this paper
is the following a priori estimates
\begin{theo}\label{T-1-4} For any smooth solution to the equation (1.4), there is a uniform bound
on $|\Phi|_{C^0}$, $|\Phi|_{C^1}$, $\triangle \Phi, \Phi_{tt}$ and
$ |\nabla \Phi_{t}|$, independent of both $\epsilon$ and $s$.
\end{theo}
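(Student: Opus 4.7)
The plan is to establish the estimates in the standard hierarchy---$C^0$, then $C^1$, then the boundary weak $C^2$, and finally the interior weak $C^2$---verifying at each step that the constants depend only on $\phi_0$, $\phi_1$, and $(X,g)$, not on $s$ or $\epsilon$. Two structural facts drive the argument. First, the linearization $\cL$ of $P$ at any admissible $\Phi$ (one with $1+\triangle\Phi>0$ and $\Phi_{tt}>0$) is a linear elliptic operator on $X\times[0,1]$ with no zeroth-order term, so the maximum principle pins the sup of an $\cL$-subharmonic function to $X\times\{0,1\}$. Second, $P$ is concave on the admissible cone of Hessians; this is inherited from $Q$ and passes to the convex combination $P=sQ+(1-s)(\partial_t^2+\triangle)$ uniformly in $s$, and it yields $\cL(\Phi_{tt})\geq 0$ after differentiating $P(D^2\Phi)=\epsilon$ twice in $t$.

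For $C^0$ and $C^1$, I would use barriers $B^{\pm}(x,t)=L(x,t)\pm A\,t(1-t)$, where $L=(1-t)\phi_0+t\phi_1$ is the linear interpolation. A direct computation gives $P(D^2 B^{\pm})=\mp 2A(1+s\triangle L)-s|\nabla(\phi_1-\phi_0)|^2+(1-s)\triangle L$, and for $A$ large depending only on $|\phi_i|_{C^2}$ one has $P(D^2 B^+)\leq\epsilon\leq P(D^2 B^-)$, so comparison gives $B^-\leq\Phi\leq B^+$, hence $|\Phi-L|\leq A\,t(1-t)$. This yields the $C^0$ bound and bounds $|\Phi_t|$ at $t=0,1$. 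Differentiating $P(D^2\Phi)=\epsilon$ once in $t$ gives $\cL(\Phi_t)=0$, so the interior sup of $|\Phi_t|$ is controlled by its boundary values. The spatial gradient $|\nabla_x\Phi|$ follows from a Bernstein-type maximum principle applied to $|\nabla_x\Phi|^2+B\Phi^2$ for a suitable constant $B$.

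The bulk of the work is the weak $C^2$ estimate. Concavity together with differentiating the equation twice in $t$ yields $\cL(\Phi_{tt})\geq 0$, so $\sup\Phi_{tt}$ is attained at $t\in\{0,1\}$. On the boundary, the tangential second derivatives are pinned by $\phi_0,\phi_1$, and the mixed derivative $|\nabla\Phi_t|$ at $t=0,1$ is controlled by the classical boundary $C^2$ estimate, using a local barrier in $t$ built from $\pm t(\phi_1-\phi_0)\pm A\,t(1-t)$ combined with the interior bound on $\Phi_t$. With $|\nabla\Phi_t|$ on the boundary in hand, evaluating the equation at $t=0$ gives $\Phi_{tt}(1+s\triangle\phi_0)=\epsilon+s|\nabla\Phi_t|^2-(1-s)\triangle\phi_0$, and since $1+s\triangle\phi_0\geq\min(1,1+\triangle\phi_0)>0$ uniformly in $s$, this bounds $\Phi_{tt}|_{t=0}$; the case $t=1$ is symmetric. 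With $\Phi_{tt}$ bounded above, admissibility $1+\triangle\Phi>0$ and the equation $P(D^2\Phi)=\epsilon\leq 1$ together yield upper bounds on $\triangle\Phi$ and on $|\nabla\Phi_t|$ by direct rearrangement.

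The main obstacle is keeping every intermediate bound free of $\epsilon^{-1}$ and of positive lower bounds on $\Phi_{tt}$ or $1+\triangle\Phi$, so that the estimates remain uniform as $\epsilon\to 0$. This forces $\Phi_{tt}$ to be bounded via the maximum principle (sup attained on the boundary) rather than by inverting the equation in the interior, and the boundary $C^2$ estimate to be derived from $t$-direction barriers rather than from algebraic manipulation of $P=\epsilon$. Uniformity in $s$ is automatic, since every argument uses only the concavity and the absence of a zeroth-order term in $\cL$, both of which hold with the same structural constants across the path $s\in[0,1]$.
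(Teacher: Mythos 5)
Your proposal has the right skeleton---barrier comparison for $C^0$, $\cL(\Phi_t)=0$ and a Bernstein quantity for $C^1$, concavity and the maximum principle for the second-order bounds, then boundary barriers closing the loop---and several steps coincide exactly with the paper (the barrier $L\pm At(1-t)$, the Bernstein quantity $|\nabla_X\Phi|^2+B\Phi^2$, the boundary computation $\Phi_{tt}(1+s\triangle\phi_0)=\epsilon+s|\nabla\Phi_t|^2-(1-s)\triangle\phi_0$). However there is a genuine gap in the interior $C^2$ step.

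The gap is the claim that ``with $\Phi_{tt}$ bounded above, admissibility $1+\triangle\Phi>0$ and the equation $P(D^2\Phi)=\epsilon\leq 1$ together yield upper bounds on $\triangle\Phi$ and on $|\nabla\Phi_t|$ by direct rearrangement.'' This does not work. First, the admissibility you assume ($\Phi_{tt}>0$ and $1+\triangle\Phi>0$) does \emph{not} hold along the continuity path; for $s<1$ the paper's Proposition 3.4 only gives $\Phi_{tt}+1+\triangle\Phi>0$, equivalently $s\Phi_{tt}+(1-s)>0$ and $s(1+\triangle\Phi)+(1-s)>0$. Second, even at $s=1$, where $\Phi_{tt}(1+\triangle\Phi)-|\nabla\Phi_t|^2=\epsilon$ and one does have $\Phi_{tt}\geq 0$, a uniform upper bound on $\Phi_{tt}$ gives only a \emph{lower} bound on $1+\triangle\Phi$: the configuration $\Phi_{tt}=\epsilon/N$, $1+\triangle\Phi=N$, $\nabla\Phi_t=0$ satisfies the equation for arbitrarily large $N$ while $\Phi_{tt}$ is tiny. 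No algebraic rearrangement of $P=\epsilon$ can exclude this; you need an independent maximum-principle argument for $\triangle\Phi$. The paper supplies exactly this: it shows $dP(1+\triangle\Phi)\geq 0$ when $\mathrm{Ric}\geq 0$ (Lemma 3.7), and uses the quantity $f\exp(\tfrac12 bt^2-b\Phi+A)$ with $f=\Phi_{tt}+1+\triangle\Phi$ in the Ricci-free case (Section 4), in both cases reducing $\triangle\Phi$ to its boundary values. Once \emph{both} $\Phi_{tt}$ and $\triangle\Phi$ are bounded, $|\nabla\Phi_t|^2=\Phi_{tt}(1+\triangle\Phi)-Q$ with $sQ=\epsilon-(1-s)(\Phi_{tt}+\triangle\Phi)$ bounded (using $s\geq\delta$) does give the interior $|\nabla\Phi_t|$ bound by rearrangement; but this needs the $\triangle\Phi$ estimate first, which you omit.

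Two smaller imprecisions worth flagging. ``$P$ is concave'' is false as stated---$Q$ is an indefinite quadratic form---what is true (the paper's Lemma 3.3, used in equation (3.10)) is that $\log\bigl((\Phi_{tt}+\tfrac{1-s}{s})(1+\triangle\Phi+\tfrac{1-s}{s})-|\nabla\Phi_t|^2\bigr)$ is concave; since this equals a constant when $P=\epsilon$, your conclusion $\cL(\Phi_{tt})\geq 0$ still holds, but the justification should go through the log. Also, the $C^0$ lower bound $B^-\leq\Phi$ cannot be obtained by the naive comparison principle: $P$ is degenerate elliptic only on a cone, the barrier lies in the cone $\Phi_{tt}>0$, $1+\triangle\Phi>0$ while the solution lies in the larger cone above, and to compare them the paper invokes Donaldson's Lemma 3.2 (the argument with the $(n+2)\times(n+2)$ matrices and $Q(B-A)\geq 0$). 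The upper bound $\Phi\leq B^+$ is easy because $\Phi_{tt}>-\tfrac{1-s}{s}$ rules out the large negative $\Phi_{tt}$ of the barrier at an interior maximum, but the lower bound requires the extra structure.
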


{\bf Organization:} In Section 2 we summarize Donaldson's theory
on the space of volume forms. In Section 3 we derive the {\it a
priori} estimates to solve the equations.  We assume first
that the Ricci curvature of the background metric is non-negative.
With this assumption the computation is straightforward. In Section 4 we derive the a priori
$C^2$ estimates without Ricci assumption. In Section 5 we discuss
the geometric structure of $\cH$. In particular we prove that
$\cH$ is a non-positively curved metric space in the sense of
Alexandrov.\\

\noindent{\bf Acknowledgement:} Both authors would like to thank Y. Yuan for
pointing out an oversight when applying Evans-Krylov theory  in a previous version of the paper.  Both authors would also like to thank the referees for useful suggestions and comments, and for spotting a gap in the argument of Theorem \ref{T-7-8}. 
WYH would like to thank Professor N. Ghoussoub
for valuable discussions for the space of volume forms.
XXC is partially supported by NSF and WYH was partially supported by a PIMS postdoc fellowship at University of British Columbia. 

\section{Geometric structure of $\cH$}
In this section we briefly summarize Donaldson's theory
\cite{Donaldson2007} on the space of volume forms. The readers are
encouraged to  refer \cite{Donaldson1997, Chen2000} for
more details. Given a compact Riemannian manifold $(X, g)$, let
$\cH$ be the set of smooth functions $\phi$ on $X$ such that
$1+\triangle \phi>0$. (We use the sign convention that $\triangle$
is a negative operator, which is opposite to the sign convention
in \cite{Donaldson2007}.) We now introduce a $L^2$ metric in this
space. Clearly, the tangent space $T\cH$ is $C^{\infty}(X)$.
Define the norm of tangent vector $\delta \phi$ at a point $\phi$
by \[\|\delta \phi\|^2_{\phi}=\int_X(\delta\phi)^2(1+\triangle
\phi)dg.
\]
Thus a path $\phi(t)$ in $\cH$, parameterised by $t\in [0, 1]$ say,
is simply a function on $X\times [0, 1]$ and the ``energy" of the
path is
\begin{equation}\label{E-2-1}
E(\phi(t))=\frac{1}{2}\int_0^1\int_X\left(\frac{\p \phi}{\p
t}\right)^2(1+\triangle \phi)dgdt.
\end{equation}

It is straightforward to write down the Euler-Lagrange equations
associated to the energy (\ref{E-2-1}). These are
\begin{equation*}
\ddot{\phi}=\frac{|\nabla \dot\phi|^2}{1+\triangle \phi}.
\end{equation*}
These equations define the geodesics in $\cH$. We can read off the
Levi-Civita connection of the metric from this geodesic equation,
as follows. Let $\phi(t)$ be any path in $\cH$ and $\psi(t)$ be
another function $X\times [0, 1]$, which we regard as a vector
field along the path $\phi(t)$. Then the covariant derivative of
$\psi$ along the path is given by
\begin{equation*}
D_t\psi=\frac{d\psi}{dt}+(W_t, \nabla_X\psi),
\end{equation*}
where
\[
W_t=\frac{-1}{1+\triangle \phi}\nabla_X\dot\phi.
\]
This has an important consequence for the {\it holonomy group} of
the manifold $\cH$. It is shown \cite{Donaldson2007} that the
holonomy group of $\cH$ is contained in the group of
volume-preserving diffeomorphisms of $(X, d\mu_0),$ where
$d\mu_0=(1+\triangle\phi_0)dg.$ Also Donaldson proved that the
sectional curvature of the manifold $\cH$ is formally non-positive. Let
$\phi$ be a point of $\cH$ and let $\alpha, \beta$ be tangent
vectors to $\cH$ at $\phi$, so $\alpha, \beta$ are just functions
on $X$. The curvature $R_{\alpha, \beta}$ is a linear map from
tangent vectors to tangent vectors. Donaldson showed the following

\vspace{2mm} {\bf Theorem A.} {\it The curvature of $\cH$ is given
by
\[
R_{\alpha, \beta}(\psi)=(\nu_{\alpha, \beta}, \nabla \psi),
\]
where the vector field
\[
\nu_{\alpha, \beta}=\frac{1}{1+\triangle
\phi}\mbox{curl}\left(\frac{1}{1+\triangle \phi}\nabla
\alpha\times \nabla \beta\right).
\]
The sectional curvature is defined by
\[
K_{\alpha, \beta}=\langle R_{\alpha, \beta}(\alpha), \beta\rangle.
\]
In particular  \[ K_{\alpha, \beta}=-\int_X\frac{1}{1+\triangle
\phi}|d\alpha\wedge d\beta|^2dg\leq 0.\]}

We define a functional on $\cH$ for paths by
\[
V(\phi)=\int_X\phi dg.
\]
This function is convex along geodesics in $\cH$, since the
geodesic equation implies that $\ddot\phi\geq 0$. Now introduce a
real parameter $\epsilon\geq 0$ and consider the functional on
paths in $\cH$:
\begin{equation*}
E_{\epsilon}(\phi(t))=E(\phi(t))+\epsilon\int_0^1 V(\phi)dt,
\end{equation*}
corresponding to the motion of a particle in the potential
$-\epsilon V$. The Euler-Lagrange equations are
\begin{equation*}
\ddot\phi=\frac{|\nabla_X\dot \phi|^2+\epsilon}{1+\triangle \phi}.
\end{equation*}
This equation is equivalent to the other two free boundary
problems. For the detailed discussion of this equation and its
relation to free boundary problems and many other interesting
problems, we refer the readers to Donaldson \cite{Donaldson2007}.

\section{ Existence of the solution}
In this section we derive the {\it a priori} estimates to solve
equation (1.1).  For the easiness of presentation and convenience of the first time readers,
we  assume the background metric has non-negative  Ricci curvature. In this case, the
calculation is more streamliner which explains the main idea. The general case will be deferred  to Section 4.
\begin{theo}\label{T-3-1} Assume that $(X, g)$ has non-negative Ricci curvature.
For any $\phi_0, \phi_1\in \cH$, the equation
\begin{equation}\label{E-3-1} P \left(D^2 \Phi(\cdot,
t,s,\epsilon)\right) = \epsilon,\end{equation}  with the boundary
condition
\begin{equation}\label{3-2} \Phi(\cdot, 0, s, \epsilon) = \phi_0, \Phi(\cdot,
1, s, \epsilon) = \phi_1,
\end{equation} has a unique smooth solution with $\Phi\in \cH$ at
$s=1$. Moreover, \[ |\Phi|_{C^0}, |\Phi|_{C^1}, |\triangle \Phi|,
|\Phi_{tt}|, |\nabla \Phi_t|
\] are uniformly bounded, independent of $s$ and $\epsilon$.
\end{theo}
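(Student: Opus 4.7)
\textbf{Proof proposal for Theorem \ref{T-3-1}.} The plan is to run the method of continuity in the parameter $s\in[0,1]$ for the family \eqref{E-3-1}, with $\epsilon>0$ fixed, and then observe that the a priori estimates obtained are in fact independent of $\epsilon$ as well. At $s=0$, the equation reduces to the linear Dirichlet problem $\Phi_{tt}+\triangle\Phi=\epsilon$ on $X\times[0,1]$ with boundary data $\phi_0,\phi_1$, which is uniquely solvable. Let $S\subset[0,1]$ be the set of $s$ for which a smooth solution of \eqref{E-3-1}--\eqref{3-2} with $1+\triangle\Phi(\cdot,t)>0$ exists.

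For \emph{openness} of $S$, I would linearize $P$ at a solution $\Phi$. Because $P$ is the $s$-interpolation between the Laplacian and $Q$, and because $Q$ is concave in $D^2\Phi$ (by the standard computation, see \eqref{concave}), the linearization is a second order operator on $X\times(0,1)$ whose symbol, on a solution with $1+\triangle\Phi>0$ and $\Phi_{tt}>0$ (forced by $P=\epsilon>0$), is positive definite; hence it is uniformly elliptic with a Dirichlet problem solvable in H\"older spaces, so the implicit function theorem yields openness.

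For \emph{closedness}, I would establish the a priori estimates advertised in the statement in the following order. First, the $C^0$ bound: since the right hand side is a positive constant $\epsilon\le 1$ and $P$ is elliptic on admissible functions, the maximum principle applied to $\Phi$ and to a linear barrier in $t$ built from $\phi_0,\phi_1$ and the $L^\infty$ data gives a bound depending only on $\|\phi_0\|_{C^0},\|\phi_1\|_{C^0}$ and the geometry of $X$. Next, the $C^1$ bound splits into a boundary gradient estimate (barrier construction at $t=0,1$ using smoothness of $\phi_i$, the $C^0$ bound of $\Phi$, and the sign $\Phi_{tt}>0$) and an interior gradient estimate; for the latter I would differentiate the equation in a tangential direction and apply the maximum principle to an auxiliary function of the form $|\nabla_{X,t}\Phi|^2 e^{A\Phi}$, where the term involving the Ricci curvature appears with a favorable sign precisely because $\mathrm{Ric}\ge 0$. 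Then come the second order estimates: for $\triangle\Phi$ and $\Phi_{tt}$, differentiate the equation twice and use the concavity of $P$ together with $\mathrm{Ric}\ge 0$ so that the bad curvature terms from commuting derivatives either vanish or have the right sign, and control the maximum of $1+\triangle\Phi+\Phi_{tt}$ via a standard auxiliary function; $|\nabla\Phi_t|$ is then controlled using the equation $\Phi_{tt}(1+\triangle\Phi)-|\nabla\Phi_t|^2=\epsilon/s$ (or its $s$-analogue) once both factors on the left are bounded from above.

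Once these weak $C^2$ bounds are in place, the concavity of $P$ together with uniform ellipticity (from $1+\triangle\Phi>0$ and $\Phi_{tt}>0$ bounded above and below away from zero, the latter coming from $P=\epsilon$) allows Evans--Krylov to yield an interior $C^{2,\alpha}$ estimate, and Schauder theory then upgrades this to $C^{k,\alpha}$ for all $k$; the boundary is straightforward since $\phi_0,\phi_1$ are smooth and the boundary is flat in the $t$-direction. Uniqueness follows from the maximum principle applied to the difference of two solutions, using the concavity of $P$. The main obstacle I expect is the second order estimate for $\triangle\Phi$ and $\Phi_{tt}$: commuting $\triangle$ past the nonlinear operator produces several curvature and gradient-of-second-derivative terms, and keeping the sign under control is where the $\mathrm{Ric}\ge 0$ assumption is used in a decisive way. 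This is precisely why the general case is postponed to Section 4.
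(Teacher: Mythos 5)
Your overall plan matches the paper's at a coarse level: method of continuity in $s$, a priori estimates in the order $C^0\to C^1\to$ weak $C^2$, then Evans--Krylov and Schauder, with $\mathrm{Ric}\ge 0$ entering through the maximum-principle computations. However, the continuity setup contains a genuine gap, and one of your positivity claims is simply false.

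You claim that $\Phi_{tt}>0$ is ``forced by $P=\epsilon>0$''. This is wrong for $s<1$. At $s=0$ the equation is $\Phi_{tt}+\triangle\Phi=\epsilon$, so $\Phi_{tt}=\epsilon-\triangle\Phi$ is negative wherever $\triangle\Phi>\epsilon$, and nothing improves as $s$ increases slightly. What the equation actually gives is the single inequality $\Phi_{tt}+1+\triangle\Phi>0$, proved by a first-time-of-failure argument: if it degenerated at some $(x_0,t_0)$, then $Q(D^2\Phi)(x_0,t_0)\le 0$ and $\Phi_{tt}+\triangle\Phi\le -1<0$, hence $P(s,D^2\Phi)(x_0,t_0)\le 0<\epsilon$, a contradiction. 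From this, the determinant identity
\[
(s\Phi_{tt}+(1-s))(s(1+\triangle\Phi)+(1-s))-s^2|\nabla\Phi_t|^2=s\epsilon+(1-s)^2>0
\]
forces the two weighted quantities $s\Phi_{tt}+(1-s)$ and $s(1+\triangle\Phi)+(1-s)$ to be positive, and these, not $\Phi_{tt}>0$ or $1+\triangle\Phi>0$, are what make $dP$ uniformly elliptic along the path. This is the paper's Proposition \ref{P-3-3} and is the key structural lemma missing from your proposal. Relatedly, your definition of $S$ requires $1+\triangle\Phi>0$ along the whole path, but the a priori estimates you would prove only bound $1+\triangle\Phi$ from above and from below by $-C$; they do not keep it away from zero, so closedness of $S$ as you define it is not established. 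The paper never asks for $\Phi(\cdot,t)\in\cH$ at intermediate $s$; it deduces $\Phi_{tt}>0$ and $1+\triangle\Phi>0$ only at $s=1$, from $Q(D^2\Phi)=\epsilon>0$ combined with $\Phi_{tt}+1+\triangle\Phi>0$.

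A second, smaller issue: you describe the boundary estimates as ``straightforward since $\phi_0,\phi_1$ are smooth and the boundary is flat in the $t$-direction.'' In the paper this is the most technical step. The bound on $|\Phi_{tk}|$ along $X\times\{0\}$ is obtained from an explicit barrier $\tilde h=A(\Phi-\phi_0+At-At^2)+B\sum x_i^2+(\Phi-\phi_0)_{,k}$ (in the spirit of Guan and Guan--Spruck), by showing $dP(\tilde h)<0$ and $\tilde h\ge 0$ on the boundary of a small box $B_0(\rho)\times[0,\kappa]$, then reading off $\Phi_{tk}\ge -C$ from $\partial_t\tilde h|_{t=0}\ge 0$. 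The boundary bound on $\Phi_{tt}$ is then recovered from the equation, dividing by $s(1+\triangle\Phi)+(1-s)$, which is bounded below on the boundary because $\phi_0,\phi_1\in\cH$. The interior estimates you sketch only bound $\triangle\Phi$ and $\Phi_{tt}$ against the boundary maximum plus a constant, so they are not self-contained; the boundary barriers are essential, not an afterthought.
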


\subsection{Concavity and the continuity method}
The foundation relies on a convexity of the nonlinear
operator $Q$ which is showed by Donaldson in \cite{Donaldson2007}.
It is also used to show the uniqueness of the equation
(\ref{E-1-2}) in \cite{Donaldson2007}. Let $A$ be a symmetric
$(n+1)\times (n+1)$ matrix with entries $A_{ij},~0\leq i, j\leq
n$. Define
\[
Q(A)=A_{00}\sum_{i=1}^nA_{ii}-\sum_{i=1}^nA_{i0}^2.
\]
Thus $Q$ is a quadratic function on the vector space of symmetric
$(n+1)\times (n+1)$ matrices.

\begin{lemma}\label{L-3-2}(Donaldson \cite{Donaldson2007}) 1. If $A>0$, then
$Q(A)>0$ and if $A\geq 0$, $Q(A)\geq 0$.\\

2. If $A, B$ are two matrices with $Q(A)=Q(B)>0,$ and if the
entries $A_{00}, B_{00}$ are positive then for any $s\in [0, 1]$,
\[
Q(sA+(1-s)B)\geq Q(A), Q(A-B)\leq 0.
\]
Moreover, the equality holds if and only if $A_{ii}=B_{ii},
A_{i0}=B_{i0}$.
\end{lemma}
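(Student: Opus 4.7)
The plan is to treat the two parts of the lemma separately. For part 1, the cleanest approach is via the Schur complement. If $A_{00} > 0$, factor
\[
Q(A) \;=\; A_{00}\,\mathrm{tr}(S), \qquad S \;:=\; \tilde A \,-\, \tfrac{1}{A_{00}}\,a\,a^{T},
\]
where $\tilde A$ denotes the lower-right $n\times n$ block of $A$ and $a = (A_{10},\ldots,A_{n0})^{T}$. Since $S$ is the Schur complement of the $(0,0)$ entry, it inherits positive (semi-)definiteness from $A$, and so $\mathrm{tr}(S) > 0$ (resp.\ $\geq 0$), giving the corresponding sign for $Q(A)$. The remaining case $A_{00} = 0$ with $A\geq 0$ is immediate: non-negativity of each $2\times 2$ principal minor involving the zeroth row forces $A_{i0} = 0$ for all $i\geq 1$, whence $Q(A) = 0$.

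For part 2, I would first exploit that $Q$ is a quadratic form: along any affine segment, $g(s) := Q(sA + (1-s)B)$ is a quadratic polynomial in $s$. Polarization yields the identity
\[
g(s) \;=\; s\,Q(A) \,+\, (1-s)\,Q(B) \,-\, s(1-s)\,Q(A-B),
\]
which under the hypothesis $Q(A) = Q(B) = c$ collapses to $g(s) = c - s(1-s)\,Q(A-B)$. Since $s(1-s) \geq 0$ on $[0,1]$, both assertions of part 2 --- the concavity along the segment and the inequality $Q(A-B)\leq 0$ --- reduce to the single claim that $Q(A-B) \leq 0$.

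To prove $Q(A-B) \leq 0$ under the stated hypotheses, I would use the constraint $Q(A) = Q(B) = c > 0$ to eliminate the traces: writing $a = A_{00}$, $b = B_{00}$, $p = (A_{i0})_{i\geq 1}$, $q = (B_{i0})_{i\geq 1}$, the constraint yields $\sum A_{ii} = (c + |p|^{2})/a$ and $\sum B_{ii} = (c + |q|^{2})/b$. Substituting these into $Q(A-B) = (a-b)\bigl(\sum A_{ii} - \sum B_{ii}\bigr) - |p-q|^{2}$ and simplifying, one obtains the clean identity
\[
Q(A-B) \;=\; -\,\frac{c\,(a-b)^{2}}{a\,b} \;-\; \Bigl|\,\sqrt{b/a}\,\,p \,-\, \sqrt{a/b}\,\,q\,\Bigr|^{2},
\]
which is manifestly non-positive because $a,b,c > 0$. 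Both terms on the right vanish precisely when $a = b$ and $p = q$; in that case the constraint also forces $\sum A_{ii} = \sum B_{ii}$, yielding the equality characterization.

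The main obstacle is conceptual rather than computational: $Q$ is an indefinite quadratic form (its Hessian has signature $(1, n+1)$ on the subspace spanned by the entries that actually appear), so it is not globally concave, and the concavity along segments surfaces only when both endpoints lie on a common positive level set of $Q$ inside $\{A_{00} > 0\}$. Both hypotheses --- a common positive value of $Q$ and positivity of the $(0,0)$ entries --- enter essentially through the factors $c$, $a$, and $b$ in the identity above; dropping either would allow the sign of $Q(A-B)$ to fail.
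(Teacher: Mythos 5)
Your proof is correct, and it is a genuinely different route from the paper's. The paper does not actually prove Lemma~3.2: it cites Donaldson's argument (via Lorentz geometry) and remarks without details that an ``elementary calculus'' proof also exists, then goes on to derive the concavity statement of Lemma~3.3 \emph{from} Lemma~3.2. Your write-up therefore fills in a step that the paper outsources. The Schur-complement factorization
\[
Q(A) = A_{00}\,\mathrm{tr}\Bigl(\tilde A - \tfrac{1}{A_{00}}aa^{T}\Bigr)
\]
for part~1 is clean and correct (and the degenerate case $A_{00}=0$ under $A\geq 0$ is handled properly by the $2\times 2$ principal minor argument). For part~2, the polarization identity
\[
Q(sA+(1-s)B) = s\,Q(A) + (1-s)\,Q(B) - s(1-s)\,Q(A-B)
\]
is the standard one for any quadratic form, and it correctly reduces both inequalities to $Q(A-B)\leq 0$. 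Your closed-form identity
\[
Q(A-B) = -\frac{c(a-b)^2}{ab} - \Bigl|\sqrt{b/a}\,p - \sqrt{a/b}\,q\Bigr|^{2}
\]
checks out algebraically and makes the sign and the equality cases transparent. One small remark: the equality characterization you extract, namely $A_{00}=B_{00}$, $A_{i0}=B_{i0}$, and $\sum_{i\geq 1}A_{ii}=\sum_{i\geq 1}B_{ii}$, is actually the precise one; the lemma's phrasing ``$A_{ii}=B_{ii}$'' for each $i$ is an overstatement, since $Q$ only depends on the trace of the lower block. This slight looseness is harmless in the paper because Lemma~3.3 uses the lemma only through the single variable $y = \sum_i A_{ii}$, but your version is the one that actually follows. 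Compared to Donaldson's Lorentz-geometric proof, your argument is more elementary and self-contained, at the cost of being more computational; the polarization identity makes the concavity-along-level-sets phenomenon particularly transparent.
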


Lemma \ref{L-3-2} is shown by Donaldson \cite{Donaldson2007} using
some Lorentz geometry. One can also prove this through elementary
calculus. This lemma  is equivalent to the
following concavity of $\log Q (D^2\Phi)$.

\begin{lemma}\label{L-3-3} Consider the function
\[
f(x, y, z_1, \cdots, z_n)=\log{\left(xy-\sum z_i^2\right)}.
\]
Then $f$ is concave when $x>0, y>0, xy-\sum z_i^2>0$.
\end{lemma}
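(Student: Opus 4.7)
The plan is to compute the Hessian of $f=\log Q$ directly, where $Q(x,y,z)=xy-\sum_{i=1}^n z_i^2$, and exhibit the resulting quadratic form as manifestly nonpositive on $\{x>0,\ y>0,\ Q>0\}$. The standard formula
\[
f_{ij}=\frac{Q_{ij}}{Q}-\frac{Q_i Q_j}{Q^2},
\]
together with the fact that $Q$ is quadratic (so $Q_{ij}$ is constant), reduces the computation of each entry of $H_f$ to a short calculation.

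Testing $H_f$ against a direction $v=(a,b,c_1,\dots,c_n)$, writing $w=(x,y,z)$, and using the identity $\sum z_i^2=xy-Q$ to complete a square in the $(a,b)$-block, I expect to arrive at the clean identity
\[
v^{T}H_f\, v \;=\; \frac{2\,Q(v)}{Q(w)} \;-\; \frac{(\nabla Q(w)\cdot v)^2}{Q(w)^2},
\]
where $Q(v)=ab-\sum c_i^2$ is the same quadratic form evaluated at the test vector. Because $Q$ is quadratic, $\nabla Q(w)\cdot v = 2B(w,v)$, where $B$ is the symmetric bilinear form polarizing $Q$.

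It then remains to verify the scalar inequality $2B(w,v)^2 \geq Q(w)Q(v)$. If $Q(v)\leq 0$, the right side is nonpositive and the left side is nonnegative, so there is nothing to prove. If $Q(v)>0$, then both $w$ and $v$ are timelike for $Q$, which has Lorentzian signature $(1,n+1)$; the hypotheses $x,y>0$ together with $Q(w)>0$ place $w$ in the open forward cone. The reverse Cauchy--Schwarz inequality for Lorentzian forms then yields $B(w,v)^2 \geq Q(w)Q(v)$, which is in fact stronger than what we need. The only step requiring care is bookkeeping in the Hessian calculation; the inequality itself is standard. A slicker alternative is to make the linear change of variables $p=(x+y)/2$, $q=(x-y)/2$, which diagonalizes $Q$ to $p^2 - q^2 - \sum z_i^2$ and reduces the lemma to the classical concavity of the logarithm of a Lorentzian form on its future cone.
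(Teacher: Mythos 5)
Your proof is correct, and it takes a genuinely different route from the paper's. The paper treats Lemma \ref{L-3-2} as given (citing Donaldson's Lorentz-geometric argument) and deduces the logarithmic concavity from the $s=\tfrac12$ case of that lemma by a rescaling trick: one normalizes the second point so that both inputs have the same value of $Q$, checks that the general midpoint inequality reduces to the normalized one, and concludes. You instead compute the Hessian of $f=\log Q$ directly and arrive at the clean identity
\[
v^{T}H_{f}\,v \;=\; \frac{2\,Q(v)}{Q(w)} \;-\; \frac{\bigl(\nabla Q(w)\cdot v\bigr)^{2}}{Q(w)^{2}}
\;=\; \frac{2\,Q(v)}{Q(w)} \;-\; \frac{4\,B(w,v)^{2}}{Q(w)^{2}},
\]
which I verified by hand: the $1/Q$ part collects to $2(ab-\sum c_i^2)/Q$ and the $1/Q^2$ part collapses to $-\bigl(ya+xb-2\sum z_i c_i\bigr)^2/Q^2$. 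Nonpositivity then follows from $B(w,v)^2\ge Q(w)Q(v)$, the reverse Cauchy--Schwarz inequality for a Lorentzian form at a timelike $w$ (which itself follows by decomposing $v$ along $w$ and its $Q$-orthogonal complement, where $Q$ is negative definite). Your argument has two advantages: it is self-contained (you do not need to import Lemma \ref{L-3-2}), and it makes explicit the Lorentzian structure that Donaldson's original algebraic lemma relies on. The paper's argument, by contrast, is organized to emphasize the equivalence between the matrix inequality in Lemma \ref{L-3-2} and the logarithmic concavity used in the continuity method, which is what the rest of the section actually deploys. One small remark: the hypothesis ``$v$ timelike'' in your invocation of reverse Cauchy--Schwarz is unnecessary -- the inequality $B(w,v)^2 \ge Q(w)Q(v)$ holds for all $v$ once $w$ is timelike -- though including it does no harm since you handle the $Q(v)\le 0$ case separately.
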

\begin{proof}
We can restate Lemma \ref{L-3-2}  as follows (taking $s=1/2$). Let
$x>0, y>0$ and $a, b>0$, if
\[
xy-\sum z_i^2=ab-\sum c_i^2>0,
\]
then
\begin{equation}\label{E-3-3}
\frac{1}{4}\left((x+a)(y+b)-\sum (z_i+c_i)^2\right)\geq xy-\sum
z_i^2.
\end{equation}
Obviously $f$ is smooth when $x>0, y>0, xy-\sum z_i^2>0$. It
suffices to prove that
\[
\frac{1}{2}\left(f(x, y, z_i)+f(\tilde x, \tilde y, \tilde
z_i)\right)\leq f\left(\frac{x+\tilde x}{2}, \frac{y+\tilde y}{2},
\frac{z_i+\tilde z_i}{2}\right).
\]
Namely, we need to show that
\[
\log{\left(xy-\sum z_i^2\right)}+\log{\left(\tilde x\tilde y-\sum
\tilde z_i^2\right)}\leq 2\log{\left(\left(\frac{x+\tilde
x}{2}\right)\left(\frac{y+\tilde y}{2}\right)-\sum
\left(\frac{z_i+\tilde z_i}{2}\right)^2\right)}.
\]
We can rewrite the above as
\begin{equation*}
\sqrt{\left(xy-\sum z_i^2\right)\left(\tilde x\tilde y-\sum \tilde
z_i^2\right)}\leq \left(\left(\frac{x+\tilde
x}{2}\right)\left(\frac{y+\tilde y}{2}\right)-\sum
\left(\frac{z_i+\tilde z_i}{2}\right)^2\right).
\end{equation*}
Suppose that for some $\lambda >0$,
\[
\tilde x\tilde y-\sum\tilde z_i^2=\lambda^2(xy-\sum z_i^2).
\]
Let
\[
\tilde x=\lambda a, ~~\tilde y=\lambda b,~~ \tilde z_i =\lambda
c_i,
\]
we get that \[ ab-\sum c_i^2=xy-\sum z_i^2.\]

The above reads
\begin{equation}\label{E-3-4}
\lambda (xy-\sum z_i^2)\leq\frac{1}{4}\left((x+\lambda
a)(y+\lambda b)-\sum (z_i+\lambda c_i)^2 \right).
\end{equation}
The right hand side of (\ref{E-3-4}) is
\[
\frac{1}{4}(1+\lambda^2)\left(xy-\sum
z_i^2\right)+\frac{\lambda}{4}(xb+ya-2z_ic_i).
\]
Note when $\lambda=1$,  (\ref{E-3-4}) is exactly (\ref{E-3-3}). We
can get that from (\ref{E-3-3})
\[
2(xy-\sum z_i^2)\leq (xb+ya-2z_ic_i).
\]
Hence (\ref{E-3-4}) follows from (\ref{E-3-3}). Note the equality
holds if and only if $\lambda=1, x=a, y=b, z_i=c_i$, namely
$x=\tilde x, y=\tilde y, z_i=\tilde z_i$.
\end{proof}
Replacing $y$ by $\sum_i y_i$, the above argument shows that \[h(x, y_1, \cdots, y_n, z_1, \cdots, z_n)=\log\left(x(\sum_i y_i)-\sum_iz_i^2\right)\] is also a concave function for $x>0, \sum_iy_i>0$ and $x(\sum_iy_i)-\sum_iz_i^2>0$. It follows that $\log Q(D^2\Phi)$ is a concave functional on $D^2\Phi$.\\

When $s=0$, the equation (\ref{E-3-1}) reads
\[\Phi_{tt}+\triangle \Phi =\epsilon.\]
There is a unique smooth solution to this Laplace equation with
boundary condition (\ref{3-2}). It is a uniformly elliptic linear
equation and its linearization has zero kernel with zero boundary
data. Hence the equation (\ref{E-3-1}) can be solved uniquely for $s$
sufficiently close to 0. We can define
\begin{equation*}
s_0=\sup \left\{\tilde s: P(s, \Phi)=\epsilon~ \mbox{has a unique
smooth solution for}~ s\in [0, \tilde s)\right\}.
\end{equation*}
Note that $s_0$ is uniformly bounded away from zero. Actually we
have
\begin{prop}\label{P-3-2}
There exists a positive constant $\delta=\delta(X, \phi_0,
\phi_1)$ such that
\[P(D^2 \Phi)=\epsilon
\]
has a smooth unique solution $\Phi$ for any $s\in [0, \delta]$.
Moreover, $\left|\Phi\right|_{C^{k}}$ is uniformly bounded when
$s\in [0, \delta]$.\end{prop}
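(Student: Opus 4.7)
The plan is to run a short-time continuity method via the implicit function theorem, starting from the linear problem at $s=0$. The three main steps are: (i) solve the $s=0$ Dirichlet problem; (ii) invert the linearization of $P(s,\cdot)$ at that solution in order to open the continuity method; (iii) bootstrap $C^k$ bounds using uniform ellipticity and Schauder theory.

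When $s=0$, equation \eqref{E-3-1} reduces to the linear Poisson equation $\Phi_{tt} + \triangle \Phi = \epsilon$ on the cylinder $X \times [0,1]$ with Dirichlet data $\phi_0, \phi_1$ at $t=0, 1$. Since $X$ is closed, $\partial(X \times [0,1]) = X \times \{0,1\}$ is smooth and $\partial_t^2 + \triangle_g$ is uniformly elliptic with smooth coefficients, so classical elliptic theory produces a unique smooth solution $\Phi_0$. Next, fix a smooth reference function $\bar\Phi$ with the correct boundary values (for instance $\bar\Phi(x,t) = (1-t)\phi_0(x) + t\phi_1(x)$) and consider
\[
F:[0,1] \times C^{k+2,\alpha}_0\bigl(X \times [0,1]\bigr) \longrightarrow C^{k,\alpha}\bigl(X \times [0,1]\bigr), \qquad F(s, u) = P\bigl(s, D^2(\bar\Phi + u)\bigr) - \epsilon,
\]
where the subscript $0$ indicates vanishing Dirichlet data on $X \times \{0,1\}$. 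Writing $u_0 = \Phi_0 - \bar\Phi$, one has $F(0, u_0) = 0$ and the partial derivative $D_u F(0, u_0)\eta = \eta_{tt} + \triangle_g \eta$ is an isomorphism from $C^{k+2,\alpha}_0$ onto $C^{k,\alpha}$ (Schauder estimates plus trivial kernel on the Dirichlet class). The implicit function theorem then yields a unique $C^{k+2,\alpha}$-family $s \mapsto \Phi(s) = \bar\Phi + u(s)$ of solutions on some interval $[0, \delta]$, depending smoothly on $s$; in particular $|\Phi(s)|_{C^{k+2,\alpha}}$ is uniformly bounded there.

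For higher regularity and the quoted $C^k$ estimate, I examine the linearization of $P(s, \cdot)$ at $\Phi(s)$,
\[
L_s \eta = \bigl[s(1+\triangle \Phi) + (1-s)\bigr]\eta_{tt} - 2s\,\nabla \Phi_t \cdot \nabla \eta_t + \bigl[s\,\Phi_{tt} + (1-s)\bigr]\triangle \eta,
\]
which is uniformly elliptic once $\Phi(s)$ is controlled in $C^{2,\alpha}$; a standard Schauder bootstrap, using that $\phi_0, \phi_1$ are smooth and the boundary $X \times \{0,1\}$ is smooth, then upgrades $\Phi(s)$ to $C^\infty$ with uniform $C^k$ bounds on $[0, \delta]$. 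Uniqueness of the smooth solution at each $s \in [0, \delta]$ follows from the local uniqueness built into the implicit function theorem together with connectedness in $s$; alternatively one may argue from concavity of $\log P(s, \cdot)$, a direct extension of Lemma \ref{L-3-3}, since the $(1-s)$ Laplacian contribution only enlarges the diagonal entries. The one subtlety I expect is maintaining uniform ellipticity of $L_s$ throughout the whole arc $[0, \delta]$: this is exactly the reason $\delta$ must be taken small, and it is guaranteed by the $C^{2,\alpha}$-continuity of $\Phi(s)$ that the IFT supplies.
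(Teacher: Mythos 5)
Your proposal is correct and takes essentially the same route as the paper: solve the linear Dirichlet problem at $s=0$, then open a short interval in $s$ via the implicit/inverse function theorem in H\"older spaces using invertibility of the linearization $\eta\mapsto \eta_{tt}+\triangle_g\eta$, and bootstrap for smoothness and uniform $C^k$ bounds. The paper simply states this more tersely ("inverse function theorem in the Banach space"), while you supply the explicit functional-analytic setup, but the argument is the same.
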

\begin{proof}
The proof is an inverse theorem for the operator $P$. Consider
\[
P: [0, 1]\times C^{k, \alpha}\rightarrow C^{k-2, \alpha}
\]
for any $k\geq 2,~\alpha\in [0, 1)$. We know there exists a unique
smooth function $\Phi$ such that $P(0, \Phi)=\epsilon.$ Since
$dP_{\Phi}$ is invertible at $(0, \Phi)$, the proposition follows
from the inverse function theorem in the Banach space.
\end{proof}
Without loss of generality, we will assume that $s\geq \delta>0$
for the continuity family (\ref{E-3-1}). We observe that the
linearity  $dP$ is elliptic for any $s\in [0, s_0)$, where
\begin{equation}\label{E-3-5}dP(h)=(s\Phi_{tt}+(1-s))\triangle
h+(s(1+\triangle\Phi)+(1-s))h_{tt}-2s\Phi_{tk}h_{tk}.
\end{equation}
\begin{prop}\label{P-3-3}For any $s\in [0, s_0)$, if $\Phi$ is the unique
solution of (\ref{E-1-2}), then
\[
\Phi_{tt}+1+\triangle \Phi>0.
\]
It follows that $s\Phi_{tt}+(1-s),  s(1+\triangle \Phi)+(1-s)$ are
both positive.  Moreover, $dP$ is an elliptic operator.
\end{prop}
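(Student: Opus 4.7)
The plan is to derive the three assertions from two purely algebraic consequences of the equation $P(D^2\Phi)=\epsilon$, the condition $\phi_0,\phi_1\in\cH$, and connectedness of $X\times[0,1]$; no maximum principle or higher-order differentiation is needed. Writing $u:=1+\triangle\Phi$, $v:=\Phi_{tt}$, $\eta:=u+v$, $a:=sv+(1-s)$, $b:=su+(1-s)$, expanding $P=\epsilon$ gives
\[
suv + (1-s)\,\eta \;=\; \epsilon + (1-s) + s\,|\nabla\Phi_t|^2,
\]
and multiplying this by $s$ and adding $(1-s)^2$ yields the key identity
\[
ab \;=\; s^2uv + s(1-s)(u+v) + (1-s)^2 \;=\; s\epsilon + s^2|\nabla\Phi_t|^2 + (1-s).
\]
In particular $ab>0$ and $ab-s^2|\nabla\Phi_t|^2 = s\epsilon+(1-s)>0$ everywhere on $X\times[0,1]$, for any $s\in[0,1]$ and $\epsilon>0$.

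For the positivity $\eta>0$, the case $s=0$ is immediate from $\Phi_{tt}+\triangle\Phi=\epsilon$. For $s\in(0,s_0)$, suppose for contradiction that $\eta(p_0)\le 0$ at some $p_0\in X\times[0,1]$. If $\eta(p_0)=0$, the first identity forces $suv(p_0)=\epsilon+(1-s)+s|\nabla\Phi_t(p_0)|^2>0$, whereas $u+v=0$ gives $uv=-v^2\le 0$: contradiction. If $\eta(p_0)<0$, then since $\eta$ is continuous on the connected set $X\times[0,1]$ and never vanishes (by the previous case), we must have $\eta<0$ throughout. The first identity then yields $suv\ge \epsilon+(1-s)+s|\nabla\Phi_t|^2>0$ at every point, so $u$ and $v$ share a common sign everywhere. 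But at $t=0$ we have $u|_{t=0}=1+\triangle\phi_0>0$ since $\phi_0\in\cH$, forcing $v|_{t=0}>0$ and hence $\eta|_{t=0}>0$, contradicting $\eta<0$. Therefore $\eta>0$ on all of $X\times[0,1]$.

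The positivity of $a$ and $b$ is then immediate: $a+b=s\eta+2(1-s)>0$ for $s\in[0,1]$, and $ab>0$ by the key identity, so $a,b>0$. For the ellipticity of $dP$, I read from (\ref{E-3-5}) that its principal symbol at a covector $(\xi_0,\xi_X)$ on $X\times[0,1]$ is
\[
\sigma_{dP}(\xi_0,\xi_X) \;=\; b\,\xi_0^2 - 2s\,\xi_0\,\langle\nabla\Phi_t,\xi_X\rangle + a\,|\xi_X|^2,
\]
and the Schur-complement criterion --- equivalently, requiring negative discriminant in $\xi_0$ and invoking Cauchy--Schwarz $\langle\nabla\Phi_t,\xi_X\rangle^2\le|\nabla\Phi_t|^2|\xi_X|^2$ --- shows this is positive definite exactly when $a>0$, $b>0$, and $ab-s^2|\nabla\Phi_t|^2>0$, all of which have been established. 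The only non-routine step in the whole argument is the short algebraic one: the incompatibility of $u+v=0$ with $suv>0$ does the real work, so no analytic machinery is required.
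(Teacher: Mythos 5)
Your proof is correct but proceeds along a genuinely different route from the paper's. The paper argues by continuity in $s$: it considers the first value of $s$ at which $\eta:=\Phi_{tt}+1+\triangle\Phi$ vanishes at some point, and observes that there $Q(D^2\Phi)\le 0$ and $\Phi_{tt}+\triangle\Phi=-1<0$, hence $P\le 0<\epsilon$, a contradiction; this tacitly relies on $\eta>0$ at $s=0$ and on continuous dependence of $\Phi$ on $s$ from the continuation scheme. You fix $s$ and argue pointwise. Setting $u=1+\triangle\Phi$, $v=\Phi_{tt}$, $a=sv+(1-s)$, $b=su+(1-s)$, you derive
\[
suv+(1-s)\eta=\epsilon+(1-s)+s|\nabla\Phi_t|^2, \qquad ab-s^2|\nabla\Phi_t|^2=s\epsilon+(1-s),
\]
from which $\eta$ can never vanish (else $uv\le 0$ while the first identity would force $suv>0$); by connectedness of $X\times[0,1]$, $\eta$ then keeps a single sign, and the boundary datum $u|_{t=0}=1+\triangle\phi_0>0$ rules out the negative one. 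This version is self-contained in $s$---it applies to any smooth solution of $P=\epsilon$ with $\cH$ boundary data, without reference to the deformation from $s=0$---and it also makes rigorous the inference from $\eta>0$ to $a,b>0$, which the paper asserts but which does not follow from $\eta>0$ alone: your $a+b=s\eta+2(1-s)>0$ together with $ab>0$ closes that gap. Your ellipticity argument via the discriminant/Cauchy--Schwarz criterion is the same as the paper's. One small note: your constant $s\epsilon+(1-s)$ in the second identity is the correct one; the paper writes $s\epsilon+(1-s)^2$, a minor slip (propagated to the right-hand side of its later logarithmic reformulation) that does not affect any conclusion since both expressions are positive for $s\in[0,1]$ and $\epsilon>0$.
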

\begin{proof}Let $s$ be the first value such that at some point $p=(x_0, t_0)\in X\times [0, 1]$
\[\Phi_{tt}+1+\triangle \Phi=0,
\]
where $\Phi$ is the solution of
\[
P(s, D^2\Phi)=\epsilon.
\]
It follows that\[Q(D^2\Phi)(x_0, t_0)=\Phi_{tt}(1+\triangle
\Phi)-\left|\nabla \Phi_t\right|^2\leq 0.
\]
It follows that
\[
P(s, D^2\Phi)(x_0, t_0)=sQ(D^2\Phi)+(1-s) (\Phi_{tt}+\triangle
\Phi)\leq 0.
\]
Contradiction. Since $\Phi_{tt}+1+\triangle \Phi>0$, it follows
that both $s\Phi_{tt}+(1-s), s(1+\triangle \Phi)+(1-s)$ are
positive. To show $dP$ is elliptic, we see that
\[
(s\Phi_{tt}+(1-s) )(s(1+\triangle
\Phi)+(1-s))-s^2\Phi_{tk}^2=s\epsilon+(1-s)^2.
\]
It follows that the quadratic form\[
(s\Phi_{tt}+(1-s))\sum_k\xi_k^2+(s(1+\triangle\Phi)+(1-s))\xi_0^2-2s\Phi_{tk}\xi_0\xi_k>0\]
for any nonzero vector $\xi=(\xi_0, \cdots, \xi_{n+1})$. In
particular, $dP$ is elliptic.
\end{proof}
\begin{rem}Note that we do not have $Q(D^2\Phi)>0$ along the continuous
family. At $s=0$, the solution $\Phi(x, t)$ is not necessarily in
$\cH$. But if we have a smooth solution at $s=1$, then $\Phi(x,
t)\in \cH$.
\end{rem}

We shall then derive the {\it a priori} estimates and prove Theorem
\ref{T-3-1} at the end of this section. We assume
$0<\epsilon \leq 1$ throughout the paper.
\subsection{$C^{0}$ estimate}
The $C^{0}$ estimate follows from Lemma \ref{L-3-2} and the
maximum principle.
\begin{prop}\label{P-4-2} Let $\Phi(x, t)$ be a solution of the
equation (\ref{E-3-1}) for $s\in [0, s_0)$. Denote $ \Psi_{a}(x,
t)=a(1-t)t+(1-t)\phi_0+t\phi_1$ where $a$ is a fixed constant.
Then $\Phi$ satisfies the following a priori $C^0$ estimate
\[
\Psi_{-a}\leq \Phi\leq \Psi_{a},
\]
when $a$ is sufficiently big.
\end{prop}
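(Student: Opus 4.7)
The approach is to use $\Psi_a$ and $\Psi_{-a}$ as super- and sub-barriers respectively. Since $\Psi_{\pm a}(\cdot, 0) = \phi_0$ and $\Psi_{\pm a}(\cdot, 1) = \phi_1$, they agree with $\Phi$ on the parabolic boundary, so the goal is to rule out interior extrema of $\Phi - \Psi_{\pm a}$ for $a$ large via a maximum principle argument, invoking the quadratic structure of $P$ together with the concavity encoded in Lemma \ref{L-3-2}.

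First I would evaluate $P(s, D^2 \Psi_{\pm a})$. Using $(\Psi_{\pm a})_{tt} = \mp 2a$, $\nabla (\Psi_{\pm a})_t = \nabla(\phi_1 - \phi_0)$, and the convex-combination identity $1 + \triangle \Psi_{\pm a} = (1-t)(1+\triangle \phi_0) + t(1+\triangle \phi_1) \geq c_0 > 0$ (which holds on the compact manifold $X$ because $\phi_0, \phi_1 \in \cH$), one obtains
\begin{equation*}
P(s, D^2\Psi_a) \leq -c_1 a + C_0, \qquad P(s, D^2\Psi_{-a}) \geq c_1 a - C_0,
\end{equation*}
for constants $c_1 > 0$ and $C_0$ depending only on $\phi_0, \phi_1, g$ and independent of $s \in [0,1]$. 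Hence for $a$ sufficiently large, $P(s, D^2\Psi_a) < \epsilon < P(s, D^2\Psi_{-a})$.

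The maximum principle is powered by the exact Taylor expansion
\begin{equation*}
P(s, A+N) - P(s, A) = dP|_A(N) + s\, Q_0(N), \quad Q_0(N) := N_{00}\sum_{i=1}^n N_{ii} - \sum_{i=1}^n N_{i0}^2,
\end{equation*}
which holds because $P(s,\cdot)$ is quadratic in the matrix entries. For positive semi-definite $N$, the $2\times 2$ principal-minor inequality $N_{i0}^2 \leq N_{00}N_{ii}$ yields $Q_0(N) \geq 0$; this is precisely the infinitesimal content of Lemma \ref{L-3-2}. Combined with ellipticity of $dP|_A$ (which gives $dP|_A(N) = \tr(B\cdot N) \geq 0$ whenever $N$ is positive semi-definite, since the trace of a product of two positive semi-definite matrices is nonnegative), this shows $P(s, \cdot)$ is monotone nondecreasing under positive semi-definite perturbations at any base point where $dP$ is elliptic.

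If $\Phi - \Psi_a$ attains an interior maximum at some $p$, then $N := D^2\Psi_a(p) - D^2\Phi(p) \geq 0$, and Proposition \ref{P-3-3} supplies ellipticity of $dP|_{D^2\Phi(p)}$; the monotonicity above forces $P(s, D^2\Psi_a(p)) \geq P(s, D^2\Phi(p)) = \epsilon$, contradicting the barrier estimate. Hence $\Phi \leq \Psi_a$. The lower bound $\Phi \geq \Psi_{-a}$ is symmetric but requires ellipticity at $D^2\Psi_{-a}$ rather than at the solution, which is the main (mild) technical point. One checks directly that for $a$ sufficiently large the coefficient matrix of $dP|_{D^2\Psi_{-a}}$ has positive diagonal entries $s(1+\triangle \Psi_{-a}) + (1-s) \geq sc_0 + (1-s)$ and $2as + (1-s)$, and the determinantal inequality $[sc_0+(1-s)][2as+(1-s)] \geq 2as^2 c_0 + (1-s)^2 > s^2 |\nabla(\phi_1-\phi_0)|^2$ holds uniformly in $s \in [0,1]$. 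This secures ellipticity at the sub-barrier, and the rest of the argument goes through unchanged, yielding $\Psi_{-a} \leq \Phi \leq \Psi_a$ for $a$ sufficiently large.
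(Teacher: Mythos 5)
Your proof is correct, and it takes a genuinely different (and arguably cleaner) route than the paper. The paper handles the two bounds asymmetrically: the upper bound $\Phi \le \Psi_a$ is dispatched in one line by noting that at an interior maximum of $\Phi - \Psi_a$ one would have $\Phi_{tt} \le -2a$, contradicting $\Phi_{tt} > -(1-s)/s \ge -(1-\delta)/\delta$ from Proposition \ref{P-3-3} (this is where the paper's argument uses $s\ge\delta$); the lower bound is then handled by a two-case analysis at the interior minimum, comparing $Q(D^2\Phi)$ with $Q(D^2\Psi_{-a})$ and, in the nontrivial case, embedding the Hessians into $(n+2)\times(n+2)$ matrices $A,B$ with $Q(A)=Q(B)$ to invoke the $Q(A-B)\le 0$ half of Lemma \ref{L-3-2}. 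You instead prove both bounds from a single monotonicity principle: since $P(s,\cdot)$ is quadratic, the exact expansion $P(s,A+N)-P(s,A)=dP|_A(N)+sQ_0(N)$ combined with $Q_0(N)\ge 0$ for $N\ge 0$ (Lemma \ref{L-3-2}, part 1) and $dP|_A(N)\ge 0$ under ellipticity gives $P$ monotone under PSD perturbations. For the upper bound you use ellipticity of $dP$ at $D^2\Phi$ (Proposition \ref{P-3-3}); for the lower bound you correctly identify that ellipticity is needed instead at $D^2\Psi_{-a}$ and verify it directly for $a$ large uniformly in $s$, a step the paper sidesteps by its case split. The net trade: your argument is symmetric, avoids the $(n+2)$-matrix device, and does not rely on $s\ge\delta$ for the upper bound, at the cost of a direct ellipticity check at the sub-barrier; the paper's upper-bound argument is shorter, and its lower-bound argument leans more heavily on Lemma \ref{L-3-2}, part 2. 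Both are valid; the underlying mechanism (Lemma \ref{L-3-2} plus the maximum principle at interior extrema) is the same.
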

\begin{proof}Assume contrary; then there is some point $p=(x_0, t_0)$ such that
$\Phi(x_0, t_0)>\Psi_a(x_0, t_0)$ and so $\Phi-\Psi_a$ obtains its
maximum interior at $q$. At the point $q$, we have \[\Phi_{tt}\leq
{\Psi_{a}}_{,tt}=-2a, \triangle \Phi\leq \triangle
\Psi_{a}=(1-t)\triangle \phi_0+t\triangle \phi_1.\] However by
Proposition \ref{P-3-3}, \[\Phi_{tt}>-\frac{(1-s)}{s}. \] Note we
assume that $s\geq \delta>0$ by Proposition \ref{P-3-2}.
Contradiction if $a$ is sufficiently big. It follows that for some
big $a$
\[\Phi\leq\Psi_{a}.
\]

If there is some point $p=(x_0, t_0)$ such that $\Phi(x_0,
t_0)<\Psi_{-a}(x_0, t_0)$, then $\Phi-\Psi_{-a}$ obtains its
minimum interior at $q$. At the point $q$, $D^2(\Phi-\Psi_{-a})$
is nonnegative. In particular at the point $q$ $\Phi_{tt}\geq
{\Psi_{-a}}_{,tt}=2a, \triangle \Phi(q)\geq \triangle
\Psi_{-a}=t\triangle \phi_1+(1-t)\triangle \phi_0 .$ Suppose at
point $q$ \[ Q(D^2\Phi)=\Phi_{tt}(1+\triangle
\Phi)-\Phi_{tk}^2\geq Q(D^2\Psi_{-a}),\]
where\[Q(D^2\Psi_{-a})=2a(1+t\triangle \phi_1+(1-t)\triangle
\phi_0)-\left|\nabla \phi_0-\nabla \phi_1\right|^2>0
\] when $a$ is big enough.
Then \begin{eqnarray*}P(s, D^2\Phi)(q)&=&sQ(D^2\Phi)+(1-s)
(\Phi_{tt}+1+\triangle \Phi)\\
&\geq&sQ(D^2\Psi_{-a})+(1-s) (2a+t\triangle \phi_1+(1-t)\triangle
\phi_0)>1
\end{eqnarray*}
when $a$ is big enough. Contradiction. It follows that at $q$,
\begin{equation}\label{E-3-6}
Q(D^2\Phi)<Q(D^2\Psi_{-a}).
\end{equation}
Let $A$ be a $(n+2)\times(n+2)$ symmetric matrix such that the
$(n+1)\times(n+1)$ block of $A$ is $D^2\Psi_{-a}$, and
$A_{i(n+2)}=A_{(n+2)i}=0$ for $1\leq i\leq n+1$,
$A_{(n+2)(n+2)}=1$. Let $B$ be a $(n+2)\times(n+2)$ symmetric
matrix such that the $(n+1)\times(n+1)$ block of $B$ is $D^2\Phi$
and $B_{i(n+2)}=B_{(n+2)i}=0$ for $1\leq i\leq n+1$,
$B_{(n+2)(n+2)}=\lambda$. $\lambda$ is a constant satisfying
\[
Q(B)=\Phi_{tt}(\lambda+\triangle
\Phi)-\Phi_{tk}^2=Q(A)=Q(D^2\Psi_{-a}).
\]
By (\ref{E-3-1}) we know that $\lambda>1$. It follows from Lemma
\ref{L-3-2} that $Q(B-A)<0$. But $B-A$ is semi-positive definite,
$Q(B-A)\geq0.$ Contradiction.
\end{proof}
\subsection{$C^{1}$ estimates}
 At any point $p\in X\times
[0, 1]$, take local coordinates $( x_1, \cdots, x_n, t)$. We can
always choose a coordinates such that the metric tensor $g$
satisfies $g_{ij}=\delta_{ij}, \p_kg_{ij}=0$ at one point. We will
also use, for any smooth function $f$ on $X\times [0, 1]$, the
following notations
\[
\triangle f_i=\triangle (f_i), ~~\triangle f_{ij}=\triangle
(f_{ij}), ~~\triangle f_{,i}=(\triangle f)_{, i}~~\mbox{and}~~
\triangle f,_{ij}=(\triangle f)_{ij}.
\]
By Weitzenbock's formula, we have
\begin{equation}\label{W-1}
\triangle f_i=\triangle f,_{i}+R_{ij}f_j,
\end{equation}
where $R_{ij}$ is the Ricci tensor of the metric $g$. The
derivatives $f_i, f_{ij}$ etc are all covariant derivatives.
\begin{lemma}\label{L-4-3}Assume $(X, g)$ has nonnegative Ricci curvature. If
$\Phi$ is a solution of (\ref{E-3-1}), then $\Phi$ satisfies the
following {\it a priori} estimates
\[
|\nabla \Phi|\leq C,~~|\Phi_{t}|\leq C,
\]
where $C$ is a universal constant, independent of $s$ and
$\epsilon$.
\end{lemma}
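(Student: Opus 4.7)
My plan is to prove both bounds by applying the maximum principle to $|\nabla \Phi|^2$ and $\Phi_t^2$ against the linearized operator
\[
dP(h) = a\,\triangle h + b\,h_{tt} - 2c_k h_{tk},
\]
where $a = s\Phi_{tt}+(1-s)$, $b = s(1+\triangle \Phi)+(1-s)$ and $c_k = s\Phi_{tk}$. By Proposition \ref{P-3-3} these coefficients are positive and $dP$ is elliptic; quantitatively, the computation there gives $ab - \sum_k c_k^2 = s\epsilon + (1-s) > 0$, so the symmetric form $a\sum_k \xi_k^2 + b\,\xi_0^2 - 2c_k\xi_0\xi_k$ is positive definite. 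Since $X$ is compact, the boundary of $X\times[0,1]$ is just $X\times\{0,1\}$, where $\Phi$ agrees with $\phi_0$ or $\phi_1$.

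For the spatial gradient, I differentiate $P(D^2\Phi)=\epsilon$ in a direction $\partial_m$ on $X$ to get
\[
a\,(\triangle \Phi)_{,m} + b\,\Phi_{ttm} - 2c_k\,\Phi_{tkm} = 0.
\]
Combined with the Weitzenbock identity (\ref{W-1}) this yields $dP(\Phi_m) = a\,R_{mj}\Phi_j$, and a direct Leibniz expansion then gives
\[
dP(|\nabla \Phi|^2) = 2\sum_{m}\Bigl[a\sum_k \Phi_{mk}^2 + b\,\Phi_{mt}^2 - 2c_k \Phi_{mk}\Phi_{mt}\Bigr] + 2a\,R_{mj}\Phi_m\Phi_j \geq 0,
\]
the bracket being nonnegative by the positive-definite quadratic form above, and the Ricci term by hypothesis. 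The maximum principle then forces $|\nabla \Phi|^2 \leq \max(|\nabla \phi_0|^2, |\nabla \phi_1|^2)$.

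For $|\Phi_t|$ I first bound it on $X\times\{0,1\}$ using the barriers $\Psi_{\pm a_0}$ of Proposition \ref{P-4-2}: since $\Psi_{-a_0}\le \Phi \le \Psi_{a_0}$ with equality at $t=0,1$, comparing one-sided $t$-derivatives at the boundary yields $|\Phi_t(\cdot,0)|, |\Phi_t(\cdot,1)| \leq a_0 + |\phi_1-\phi_0|_{C^0}$. Next, differentiating $P(D^2\Phi) = \epsilon$ in $t$ gives $a\,(\triangle \Phi)_t + b\,\Phi_{ttt} - 2c_k\,\Phi_{ttk} = 0$, with no Ricci correction since $\partial_t$ commutes with $\triangle$. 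A Leibniz computation then produces
\[
dP(\Phi_t^2) = 2\bigl[a\,|\nabla \Phi_t|^2 + b\,\Phi_{tt}^2 - 2c_k\,\Phi_{tt}\Phi_{tk}\bigr] \geq 0,
\]
so the maximum principle controls $|\Phi_t|$ on all of $X\times[0,1]$ by its boundary values.

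The only place the nonnegative-Ricci hypothesis enters is the term $a R_{mj}\Phi_m\Phi_j$ in $dP(|\nabla\Phi|^2)$, and this is the main obstacle to the general case. Removing the hypothesis will require replacing $|\nabla\Phi|^2$ with an exponentially weighted test function such as $e^{\lambda\Phi}|\nabla\Phi|^2$, chosen so that the cross terms coming from the weight absorb the negative Ricci contribution into the nonnegative principal part; this is exactly the strategy deferred to Section 4.
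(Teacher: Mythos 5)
Your argument is correct, and the treatment of $|\nabla\Phi|$ is essentially the paper's: differentiate the equation in a spatial direction, apply Weitzenbock to get $dP(\Phi_m)=a\,R_{mj}\Phi_j$, expand $dP(|\nabla\Phi|^2)$ via Leibniz, and observe that the remaining second-order terms form a nonnegative quadratic in $(\Phi_{mt},\Phi_{mk})$ by the same positivity that makes $dP$ elliptic, with the Ricci term handled by the curvature hypothesis. The paper packages the maximum principle step as an explicit $\lambda t^2$ perturbation to force strict inequality at an interior maximum; you invoke the weak maximum principle directly, which amounts to the same thing.

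Where you diverge is the $|\Phi_t|$ bound. The paper uses a cheaper monotonicity argument: Proposition \ref{P-3-3} gives $\Phi_{tt}>-(1-s)/s$, hence $t\mapsto \Phi_t + (1-s)t/s$ is increasing, which pins $\Phi_t$ between its boundary values plus a controlled constant (using $s\ge\delta$), with no second-order differentiation of $\Phi_t^2$ and no further use of ellipticity. You instead differentiate the equation in $t$ (noting, correctly, that there is no Ricci correction since $\partial_t$ is a product direction) and run the maximum principle on $\Phi_t^2$, getting $dP(\Phi_t^2)=2\bigl[a|\nabla\Phi_t|^2+b\Phi_{tt}^2-2c_k\Phi_{tt}\Phi_{tk}\bigr]\ge0$. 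Both are valid; yours is more uniform in style with the gradient estimate, while the paper's is more elementary and yields the slightly weaker but sufficient bound $\max|\Phi_t|\le\max_{\partial}|\Phi_t|+C$. Your boundary estimate for $\Phi_t$ via the barriers $\Psi_{\pm a_0}$ and one-sided difference quotients coincides with the paper's. A small forward-looking note: in Section 4 the paper drops the Ricci hypothesis by testing $\tfrac12(|\nabla\Phi|^2+b\Phi^2)$ rather than the exponential weight you suggest; the mechanism (absorbing the negative Ricci term using positivity generated by the auxiliary term) is the same, but the precise choice of test function differs.
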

\begin{proof}
Note that
\[
\Phi_{tt}+\frac{(1-s)}{s}>0.
\]
By Proposition \ref{P-3-2}, we can assume that $s\geq \delta.$ It
follows that
\[\Phi_t+\frac{(1-s) t}{s}
\] is a $t$-increasing function. It implies that
\[
\Phi_t(x, 0)-\frac{(1-s)t}{s}<\Phi_{t}<\Phi_t(x, 1)+\frac{(1-s)
}{s}-\frac{(1-s) t}{s}.
\]
Namely
\[
|\Phi_{t}|\leq \max_{\p (X\times [0, 1])}|\Phi_t|+C.
\]
On the boundary, by the $C^0$ estimates in Proposition
\ref{P-4-2}, we have
\begin{eqnarray*}
\left|\Phi_{t}(x, 0)\right|&=&\left|\lim_{t\rightarrow
0}\frac{\Phi(x, t)-\Phi(x, 0)}{t}\right|\\
&\leq&\lim_{t\rightarrow 0}\left|\frac{at(1-t)}{t}\right|+|\phi_1-\phi_0|\\
&=&a+|\phi_1-\phi_0|.
\end{eqnarray*}
Similarly, one can bound $\Phi_{t}(x, 1)$ by $a+|\phi_1-\phi_0|$,
where $a$ is the same constant as in Proposition \ref{P-4-2}.\\

To bound $|\nabla \Phi|,$ take \[h=\frac{1}{2}|\nabla \Phi|^2.
\]

Taking derivative, we get
\begin{eqnarray}\label{E-3-2}
h_t&=&\Phi_{tk}\Phi_k, ~~~
h_k=\Phi_{ik}\Phi_i\nonumber\\
h_{tt}&=&\Phi_{ttk}\Phi_k+\Phi_{tk}^2,~~~
h_{tk}=\Phi_{tik}\Phi_i+\Phi_{ti}\Phi_{ik}\nonumber\\
\triangle h&=& \Phi_{ikk}\Phi_i+\Phi^2_{ik}=\triangle \Phi_i\Phi_i+\Phi^2_{ik}\nonumber\\
&=&\triangle \Phi_{, i}\Phi_i+\Phi^2_{ik}+R_{ij}\Phi_i\Phi_j,
\end{eqnarray}
where $R_{ij}$ is the Ricci curvature of $(X, g)$. If $\Phi$
solves the equation (\ref{E-3-1}), by taking derivative, we can
get that
\begin{eqnarray}\label{e-4-4}
(s\Phi_{tt}+(1-s))\triangle \Phi_t+(s(1+\triangle
\Phi)+(1-s))\Phi_{ttt}-2s\Phi_{tk}\Phi_{ttk}=0,\\
\label{e-4-5} (s\Phi_{tt}+(1-s))\triangle \Phi_{,
k}+(s(1+\triangle \Phi)+(1-s))\Phi_{ttk}-2s\Phi_{ti}\Phi_{tik}=0.
\end{eqnarray}It follows that
\begin{eqnarray}\label{e-4-6}
dP(h)&=&(s\Phi_{tt}+(1-s))\triangle h+(s(1+\triangle
\Phi)+(1-s))h_{tt}-2s\Phi_{tk}h_{tk}\nonumber\\
&=&(s\Phi_{tt}+(1-s))\left(\triangle \Phi_{,
i}\Phi_i+\Phi^2_{ik}\right)\nonumber\\&& +(s(1+\triangle
\Phi)+(1-s))\left(\Phi_{ttk}\Phi_k+\Phi_{tk}^2\right)\nonumber\\
&&-2s\Phi_{tk}\left(\Phi_{tik}\Phi_i+\Phi_{ti}\Phi_{ik}\right)+(s\Phi_{tt}+(1-s))R_{ij}\Phi_i\Phi_j.
\end{eqnarray}
By (\ref{e-4-4}), (\ref{e-4-5}) and (\ref{e-4-6}), we have
\begin{eqnarray}\label{e-4-7}dP(h)&=&(s\Phi_{tt}+(1-s))\Phi^2_{ik}+(s(1+\triangle
\Phi)+(1-s))\Phi_{tk}^2\nonumber\\
&&-2s\Phi_{tk}\Phi_{ti}\Phi_{ik}+(s\Phi_{tt}+(1-s))R_{ij}\Phi_i\Phi_j.
\end{eqnarray}
If $|\nabla \Phi|^2$ achieves its maximum value in the interior,
we can assume that
\[\max |\nabla \Phi|^2=\max_{\p X\times [0,
1]}|\nabla \Phi|^2+\eta,\] where $\eta>0$ . Then $h+\lambda t^2$
takes its maximum in the interior for any positive constant
$\lambda\ll \eta$. Assume  the point is $p$. At point $p$,
$D^2(h+\lambda t^2)\leq 0.$ It implies that $dP(h+\lambda t^2)\leq
0$. On the other hand, by (\ref{e-4-7}), we have
\begin{eqnarray*}
&&dP(h+\lambda t^2)=(s\Phi_{tt}+(1-s))\Phi^2_{ik}+(s(1+\triangle
\Phi)+(1-s))\Phi_{tk}^2\nonumber\\&&~~~-2s\Phi_{tk}\Phi_{ti}\Phi_{ik}
+(s\Phi_{tt}+(1-s))R_{ij}\Phi_i\Phi_j+2\lambda(s(1+\triangle \Phi)+(1-s))\nonumber\\
&&~~~~~~~~~>0.
\end{eqnarray*}  Contradiction. It implies that $|\nabla \Phi|^2$ obtains its
maximum on the boundary. By the boundary condition (\ref{3-2}),
we know that $|\nabla \Phi|$ is uniformly bounded.
\end{proof}
\subsection{$C^2$ estimates}
First we have the following interior $C^2$ estimates.
\begin{lemma}\label{L-4-4}Assume $(X, g)$ has nonnegative Ricci curvature. If
$\Phi$ is a solution of (\ref{E-3-1}), then $\Phi$ satisfies the
following {\it a priori} estimate
\[
-C\leq 1+\triangle \Phi\leq C,~~-C\leq \Phi_{tt}\leq \max_{\p
(X\times [0, 1])}|\Phi_{tt}|+C,
\]
where $C$ is a universal constant, independent of $s$ and
$\epsilon$.
\end{lemma}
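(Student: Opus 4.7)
The lower bounds $\Phi_{tt}\ge -C$ and $1+\triangle\Phi\ge -C$ follow directly from Proposition \ref{P-3-3} combined with Proposition \ref{P-3-2}: the pointwise strict positivity of $s\Phi_{tt}+(1-s)$ and $s(1+\triangle\Phi)+(1-s)$ along the continuity family, together with $s\ge\delta>0$, gives $\Phi_{tt}>-(1-s)/s\ge -(1-\delta)/\delta$ and likewise for $1+\triangle\Phi$.

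For the upper bound on $1+\triangle\Phi$, the plan is to apply the Laplacian on $X$ to the equation $P(D^2\Phi)=\epsilon$. Differentiating twice in the spatial variables and using the Weitzenbock identity $\triangle\Phi_{tj}=\nabla_j\triangle\Phi_t+R_{jk}\Phi_{tk}$ to commute derivatives should yield
\[
dP(\triangle\Phi)=2s|\nabla^2\Phi_t|^2+2s\,\mathrm{Ric}(\nabla\Phi_t,\nabla\Phi_t)-2s\,\nabla\Phi_{tt}\cdot\nabla(\triangle\Phi).
\]
Under $\mathrm{Ric}\ge 0$ the first two terms on the right are nonnegative. Setting $\tilde L:=dP+2s\,\nabla\Phi_{tt}\cdot\nabla_X$, an elliptic operator with the same principal symbol as $dP$ plus a spatial first-order drift and no zeroth-order term, one obtains $\tilde L(\triangle\Phi)\ge 0$. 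Applying the weak maximum principle, after inserting the small auxiliary perturbation $\delta(t^2-t)$ for which $\tilde L(t^2-t)=dP(t^2-t)=2B>0$ strictly, gives $\max[\triangle\Phi+\delta(t^2-t)]\le\max_{\partial(X\times[0,1])}\triangle\Phi$; letting $\delta\to 0^+$ yields $\triangle\Phi\le\max(\triangle\phi_0,\triangle\phi_1)$ globally, and hence $|1+\triangle\Phi|\le C$.

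For the upper bound on $\Phi_{tt}$, an analogous computation---differentiating the equation twice in $t$---produces
\[
dP(\Phi_{tt})=2s|\nabla\Phi_{tt}|^2-2s\,\triangle\Phi_t\cdot\partial_t\Phi_{tt}.
\]
Defining $\tilde L_2:=dP+2s\,\triangle\Phi_t\,\partial_t$, one has $\tilde L_2(\Phi_{tt})=2s|\nabla\Phi_{tt}|^2\ge 0$. By Hopf's strong maximum principle for elliptic operators with smooth coefficients and no zeroth-order term, applied on the connected cylinder $X\times[0,1]$, the function $\Phi_{tt}$ either attains its maximum on $X\times\{0,1\}$ or is globally constant (hence equal to its boundary value). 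In either case we conclude $\Phi_{tt}\le\max_{\partial(X\times[0,1])}|\Phi_{tt}|+C$, where the additive $C$ absorbs any small auxiliary perturbation used to enforce strict positivity in a weak max principle route.

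The main obstacle is absorbing the cross terms $-2s\,\nabla\Phi_{tt}\cdot\nabla(\triangle\Phi)$ and $-2s\,\triangle\Phi_t\cdot\partial_t\Phi_{tt}$: each is proportional to a derivative of the very second-order quantity we are trying to bound, so $dP$ alone does not yield a sign. The trick is to carry them over as first-order drift into a modified elliptic operator ($\tilde L$, $\tilde L_2$), after which a classical maximum principle applies. The favorable sign provided by $\mathrm{Ric}\ge 0$ is essential: it removes the only uncontrolled curvature contribution appearing in the identity for $dP(\triangle\Phi)$. Without this hypothesis, one must design more delicate auxiliary functions and exploit the concavity of $\log Q$ from Lemma \ref{L-3-3} to absorb the indefinite Ricci term, which is deferred to Section 4.
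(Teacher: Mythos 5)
Your proposal is correct, and for the $\Phi_{tt}$ estimate it takes a genuinely different (and simpler) route than the paper. Both of your identities check out:
\[
dP(\triangle\Phi)=2s\Phi_{tik}^2+2sR_{ij}\Phi_{ti}\Phi_{tj}-2s\Phi_{ttk}(\triangle\Phi)_{,k},\qquad
dP(\Phi_{tt})=2s\Phi_{ttk}^2-2s\Phi_{ttt}\triangle\Phi_t,
\]
the first obtained by taking $\nabla_k\nabla_k$ of $P=\epsilon$ and applying Weitzenbock to $\triangle(\Phi_{ti})$, the second by taking $\partial_t\partial_t$. For $1+\triangle\Phi$, the paper applies $dP$ to $h=1+\triangle\Phi+\lambda t^2$ and uses the critical-point conditions $h_k=0$, $h_t=0$ at an interior maximum to kill the drift term $-2s\Phi_{ttk}(\triangle\Phi)_{,k}$; your reformulation via the drift-modified operator $\tilde L=dP+2s\nabla\Phi_{tt}\cdot\nabla_X$ and the weak maximum principle is essentially the same mechanism, expressed in a cleaner, more systematic way (the perturbation $\delta(t^2-t)$ is in fact dispensable once you invoke the weak maximum principle, since $\tilde L$ has no zeroth-order term). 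For $\Phi_{tt}$, the paper instead applies $dP$ to the \emph{combination} $h=\Phi_{tt}+1+\triangle\Phi$, which produces the mixed terms $-2s\Phi_{ttt}\triangle\Phi_t$ and $-2s\Phi_{ttk}\triangle\Phi_{,k}$; it absorbs them via the algebraic positivity of $L=\Phi_{ttk}^2-\Phi_{ttt}\triangle\Phi_t$ and $M=\Phi_{tik}^2-\Phi_{ttk}\triangle\Phi_{,k}$, each expressible (through the once-differentiated equations (\ref{e-4-4}), (\ref{e-4-5})) as the nonnegative quadratic form associated to the elliptic symbol of $dP$. Your approach works with $\Phi_{tt}$ directly and absorbs the single drift $-2s\triangle\Phi_t\,\partial_t\Phi_{tt}$ into $\tilde L_2=dP+2s\triangle\Phi_t\,\partial_t$, giving $\tilde L_2(\Phi_{tt})=2s|\nabla\Phi_{tt}|^2\ge 0$ and hence $\Phi_{tt}\le\max_{\partial}\Phi_{tt}$ immediately (slightly stronger than stated, with no additive $C$). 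This is shorter, avoids the $L,M$ bookkeeping, and has the pleasant side effect of revealing that the interior $\Phi_{tt}$ estimate needs no Ricci hypothesis at all — the curvature term only appears in the $\triangle\Phi$ computation. One pedantic note: the drift coefficients $2s\nabla\Phi_{tt}$ and $2s\triangle\Phi_t$ depend on third derivatives of $\Phi$ and are not a priori uniformly bounded, but this is harmless since the weak (or strong) maximum principle is applied pointwise to the fixed smooth solution and yields a conclusion depending only on boundary data; you may want to say this explicitly. The lower bounds via Proposition \ref{P-3-3} and the floor $s\ge\delta$ from Proposition \ref{P-3-2} are exactly the paper's argument.
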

\begin{proof} By Proposition \ref{P-3-3}, we just need
to show the upper bound. If $1+\triangle \Phi$ achieves its
maximum in the interior, at the point $p$, we can assume that
\[
1+\triangle \Phi(p)=1+\max_{\p (X\times [0,1])}\triangle
\Phi+\eta,
\]
where $\eta>0.$ For any positive constant $\lambda\ll \eta$,
$1+\triangle \Phi+\lambda t^2$ achieves its maximum value in the
interior, at the point $\tilde p$. Then $D^2(1+\triangle
\Phi+\lambda t^2)(\tilde p)\leq 0.$ It follows that
$dP(1+\triangle \Phi+\lambda t^2)(\tilde p)\leq 0$. Let
$h=1+\triangle \Phi+\lambda t^2$. Taking derivative, we have
\begin{eqnarray}\label{e-4-8}
h_t&=&\triangle \Phi_t+2\lambda t,~~~
h_k=\triangle \Phi_{,k}\nonumber\\
h_{tt}&=&\triangle \Phi_{tt}+2\lambda,~~~
h_{tk}=\triangle\Phi_{,tk},~~~ \triangle h=\triangle ^2\Phi.
\end{eqnarray}
Note at the point $\tilde p$, $h_t=\triangle \Phi_t+2\lambda t=0,
h_k=\triangle \Phi_{, k}=0$. Taking derivative of (\ref{e-4-5}),
we have, at the point $\tilde p$,
\begin{equation*}
(s\Phi_{tt}+(1-s))\triangle^2 \Phi+(s(1+\triangle
\Phi)+(1-s))\triangle \Phi_{tt}-2s\Phi_{ti}\triangle
(\Phi_{ti})-2s\Phi_{tik}^2=0.
\end{equation*}
It follows from above and (\ref{e-4-8}) that
\begin{eqnarray*}
dP(h)(\tilde p)&=&(s\Phi_{tt}+(1-s))\triangle h+(s(1+\triangle
\Phi)+(1-s))h_{tt}-2s\Phi_{tk}h_{tk}\nonumber\\
&=&(s\Phi_{tt}+(1-s))\triangle^2\Phi\\&&~~+(s(1+\triangle
\Phi)+(1-s))(\triangle
\Phi_{tt}+2\lambda)-2s\Phi_{tk}\triangle\Phi_{,tk}\nonumber\\
&=&2\lambda(s(1+\triangle
\Phi)+(1-s))+2s\Phi_{tik}^2+2sR_{ij}\Phi_{ti}\Phi_{tj}>0.
\end{eqnarray*}
Contradiction. To bound $\Phi_{tt}$, consider
\[
h=\Phi_{tt}+1+\triangle \Phi.
\]
If $h$ obtains its maximum on the boundary, we are done. If $h$
obtains its maximum in the interior, at the point $p$, we can
assume that
\[
h(p)=\max_{\p X\times [0, 1]}h+\eta,
\]
where $\eta$ is positive. Take $\tilde h=h+\lambda t^2$ for some
positive constant $\lambda\ll \eta$, then $\tilde h$ achieves its
maximum in the interior, say, at the point $\tilde p$. It follows
that $D^2\tilde h(\tilde p)\leq 0, dP(\tilde h)(\tilde p)\leq 0$.
Taking derivative, we have
\begin{eqnarray}\label{E-3-11}
\tilde h_t&=&\Phi_{ttt}+\triangle \Phi_t+2\lambda t,~~~ \tilde h_{tt}=\Phi_{tttt}+\triangle \Phi_{tt}+2\lambda\nonumber\\
\tilde h_k&=&\Phi_{ttk}+\triangle \Phi,_{k},~~~ \triangle \tilde
h=\triangle \Phi_{tt}+\triangle ^2\Phi,~~~ \tilde
h_{tk}=\Phi_{tttk}+\triangle \Phi_{, tk}.
\end{eqnarray}
We calculate
\begin{eqnarray}\label{e-4-10}
dP(\tilde h)&=&(s\Phi_{tt}+(1-s))\triangle \tilde h+(s(1+\triangle
\Phi)+(1-s))\tilde
h_{tt}-2s\Phi_{tk}\tilde h_{tk}\nonumber\\
&=&(s(1+\triangle \Phi)+(1-s))(\Phi_{tttt}+\triangle
\Phi_{tt}+2\lambda)\nonumber\\
&&+(s\Phi_{tt}+(1-s))(\triangle \Phi_{tt}+\triangle
^2\Phi)-2\Phi_{tk}(\Phi_{tttk}+\triangle \Phi_{, tk}).
\end{eqnarray}
Taking derivative of (\ref{e-4-4}) and (\ref{e-4-5}), we have
\begin{eqnarray}\label{e-4-11}
(s\Phi_{tt}+(1-s))\triangle \Phi_{tt}+(s(1+\triangle
\Phi)+(1-s))\Phi_{tttt}-2s\Phi_{tk}\Phi_{tttk}\nonumber\\
+2s\Phi_{ttt}\triangle \Phi_t-2s\Phi_{ttk}^2=0 \\
\label{e-4-12} (s\Phi_{tt}+(1-s))\triangle^2 \Phi+(s(1+\triangle
\Phi)+(1-s))\triangle
\Phi_{tt}-2s\Phi_{ti}\Phi_{tikk}\nonumber\\
+2s\Phi_{ttk}\triangle\Phi_{,k}-2s\Phi_{tik}^2 =0.
\end{eqnarray}
It follows from (\ref{e-4-10}), (\ref{e-4-11}) and (\ref{e-4-12})
that
\begin{eqnarray*}
dP(\tilde h)&=&2\lambda(s(1+\triangle
\Phi)+(1-s))+2s\Phi_{ttk}^2+2s\Phi_{tik}^2\\
&&-2s\Phi_{ttt}\triangle \Phi_t-2s\Phi_{ttk}\triangle
\Phi_{,k}+2sR_{ij}\Phi_{ti}\Phi_{tj}.
\end{eqnarray*}
Denote \begin{equation}\label{e-3-19}
L=\Phi_{ttk}^2-\Phi_{ttt}\triangle \Phi_t,
M=\Phi_{tij}^2-\Phi_{ttk}\triangle \Phi_{, k}.
\end{equation}
Using \eqref{e-4-4} and (\ref{e-4-5}), we can get that
\begin{eqnarray*}
\left(s\Phi_{tt}+(1-s)\right)L&=&
(s\Phi_{tt}+(1-s))\Phi_{ttk}^2+(s(1+\triangle\Phi)+(1-s))\Phi_{ttt}^2\\&&-2s\Phi_{tk}\Phi_{ttk}\Phi_{ttt},\\
\left(s\Phi_{tt}+(1-s)\right)M
&=&(s\Phi_{tt}+(1-s))\Phi_{tij}^2+(s(1+\triangle\Phi)+(1-s))\Phi_{ttk}^2\\&&-2s\Phi_{tk}\Phi_{tik}\Phi_{tti}.
\end{eqnarray*}
It follows that $L, M\geq 0$. Hence,
\begin{eqnarray*}
dP(\tilde h)&\geq&2\lambda(s(1+\triangle \Phi)+(1-s))>0.
\end{eqnarray*}
Contradiction.
\end{proof}

Then we derive the boundary estimates of $\triangle \Phi, \Phi_{tt}$ and $\nabla \Phi_t$ by careful construction of some barrier functions. This type of construction of barrier functions follows from \cite{Guan-Spruck} and \cite{Guan}. 
\begin{lemma}\label{L-4-5}Assume $(X, g)$ has nonnegative Ricci curvature. If
$\Phi$ is a solution of (\ref{E-3-1}), then $\Phi$ satisfies the
following {\it a priori} estimate
\[
|\triangle \Phi|\leq C~~, |\Phi_{tk}|\leq C,~~ |\Phi_{tt}|\leq C,
\]
where $C$ is a universal constant, independent of $s$ and
$\epsilon$. Note that the proof of boundary estimates do not depend on
the Ricci curvature assumption if we get the interior $C^2$
estimates.
\end{lemma}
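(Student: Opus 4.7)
The plan is to reduce the global $C^2$ estimate to boundary estimates via Lemma~\ref{L-4-4}, and then bound $|\triangle\Phi|$, $|\Phi_{tk}|$, $|\Phi_{tt}|$ on $\partial(X\times[0,1])=X\times\{0\}\cup X\times\{1\}$ one at a time. The first is immediate: on each boundary component the Laplacian $\triangle$ acts only in the tangential $x$-direction, so $\triangle\Phi|_{t=0}=\triangle\phi_0$ and $\triangle\Phi|_{t=1}=\triangle\phi_1$ are determined by the prescribed Dirichlet data; combined with Lemma~\ref{L-4-4} this yields $|\triangle\Phi|\le C$ throughout.

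For the mixed derivatives $\Phi_{tk}$ at a boundary point $p=(x_0,0)$ (the case $t=1$ is symmetric), I would work on a cylindrical neighborhood $U=B_r(x_0)\times[0,r]$ in local normal coordinates on $X$; the flatness of $\{t=0\}$ considerably simplifies the construction. Set $\bar\Phi(x,t):=(1-t)\phi_0(x)+t\phi_1(x)$ and $w:=\Phi-\bar\Phi$, so that $w$ and every tangential derivative $w_k$ vanish on $X\times\{0,1\}$. Differentiating $P(D^2\Phi)=\epsilon$ in $x_k$ and applying the Weitzenb\"ock identity \eqref{W-1}, a direct computation yields
\[
dP(\Phi_k)=(s\Phi_{tt}+(1-s))\,R_{kj}\,\Phi_j,
\]
so $|dP(w_k)|$ is controlled by the Ricci curvature together with the $C^1$ and $|\triangle\Phi|$ bounds already established. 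I would then introduce a Guan--Spruck-type barrier
\[
\Psi(x,t)=A\,t-B\,t^{2}+D\,|x-x_{0}|_{g}^{2},
\]
and choose positive constants $A,B,D$ so that $\Psi\ge|w_k|$ on $\partial U$ and $dP(\Psi)\le -\sup_U|dP(w_k)|$ in $U$; this is feasible because, by Proposition~\ref{P-3-3} and the $\triangle\Phi$ bound above, the coefficient $b:=s(1+\triangle\Phi)+(1-s)$ of $h_{tt}$ in $dP$ is bounded above and bounded below away from zero in a neighborhood of the boundary. The maximum principle for $dP$ then gives $|w_k|\le\Psi$ in $U$, and differentiating at $p$ (where $w_k(p)=\Psi(p)=0$) in the $t$-direction produces $|w_{tk}(p)|\le\Psi_t(p)=A$, whence $|\Phi_{tk}(p)|\le A+C$.

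The boundary bound for $\Phi_{tt}$ is then purely algebraic. Rewriting the equation as
\[
[s(1+\triangle\Phi)+(1-s)]\,\Phi_{tt}=\epsilon+s|\nabla\Phi_t|^{2}-(1-s)\,\triangle\Phi,
\]
and noting that on the boundary $1+\triangle\Phi=1+\triangle\phi_{0/1}\ge c_0>0$, the coefficient on the left is bounded below by $\min(c_0,1)$ while the right side is controlled by the estimates already obtained. Hence $|\Phi_{tt}|\le C$ on $\partial(X\times[0,1])$, and a final application of Lemma~\ref{L-4-4} converts this into a global bound.

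The main obstacle is the choice of parameters in the barrier: the coefficient $a:=s\Phi_{tt}+(1-s)$ of $\triangle$ in $dP$ is positive but not yet known to be bounded above at the boundary, whereas bounding $\Phi_{tt}$ is precisely the goal of the third step. One must therefore arrange the constants $A,B,D$ so that the resulting bound on $\Phi_{tk}$ is independent of any a priori bound on $\Phi_{tt}$, for otherwise the equation-based step above would close only circularly. This is the delicate point handled by the constructions in \cite{Guan-Spruck} and \cite{Guan}, which exploit the Monge--Amp\`ere-type ellipticity identity
\[
ab-s^{2}|\nabla\Phi_t|^{2}=s\epsilon+(1-s)>0
\]
together with the $C^{0}$, $C^{1}$, and $|\triangle\Phi|$ estimates already in hand.
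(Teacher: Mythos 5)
Your overall strategy (reduce to boundary values via Lemma~\ref{L-4-4}, bound $\triangle\Phi$ on the boundary from the Dirichlet data, then bound $\Phi_{tk}$ on the boundary by a barrier and finally recover $\Phi_{tt}$ algebraically from the equation) is the same as the paper's. The algebraic step for $\Phi_{tt}$ and the identity $dP(\Phi_k)=(s\Phi_{tt}+(1-s))R_{kj}\Phi_j$ are correct.

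However, the barrier argument as you have written it contains a genuine gap, and it is not merely a matter of "arranging constants." First, your claim that $|dP(w_k)|$ is controlled by the $C^1$ and $\triangle\Phi$ bounds is false: from the identity above, $dP(w_k)$ equals $(s\Phi_{tt}+(1-s))\bigl(R_{kj}\Phi_j-\triangle(\phi_0)_k\bigr)$, which scales like $a:=s\Phi_{tt}+(1-s)$, and $a$ is precisely the quantity you have not yet bounded on the boundary. Second, the purely polynomial barrier $\Psi=At-Bt^2+D|x-x_0|^2$ cannot close the argument for any choice of $A,B,D$: its Laplacian contributes $+2nD\,a(1+o(\rho))$ to $dP(\Psi)$, again proportional to the uncontrolled $a$, while $\Psi_{tt}=-2B$ only contributes $-2Bb$ with $b$ bounded, so $dP(\Psi)$ can be made arbitrarily positive by the solution itself. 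No choice of constants fixes this because every favorable term you can produce is $O(1)$ while the unfavorable terms are $O(a)$.

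The fix, which is exactly the Guan--Spruck device used in the paper, is to include a multiple of $\Phi-\phi_0$ in the barrier. Since $dP(\Phi-\phi_0)=-(1+\triangle\phi_0)\,a-(1-s)(\Phi_{tt}+1+\triangle\Phi)+2\epsilon$, and $1+\triangle\phi_0\ge c_0>0$, the term $A(\Phi-\phi_0+At-At^2)$ contributes roughly $-A(1+\triangle\phi_0)\,a$ to $dP$; with $A\gg D$ this dominates both $+2nD\,a$ from $D|x-x_0|^2$ and $+C\,a$ from $dP(w_k)$, so \emph{every} term in $dP(\tilde h)$ is a bounded multiple of $a$ and the sign can be forced negative regardless of how large $a$ is. This is what allows the resulting bound on $\Phi_{tk}$ to be independent of any a priori bound on $\Phi_{tt}$; without the $\Phi-\phi_0$ term the argument is circular, as you yourself noted, and your "feasibility" claim in the barrier paragraph contradicts your closing paragraph. (Minor: the ellipticity identity you quote should read $ab-s^{2}|\nabla\Phi_t|^{2}=s\epsilon+(1-s)^{2}$.)
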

\begin{proof}In light of Proposition \ref{P-3-3}, we assume $s\geq\delta .$ By Lemma \ref{L-4-4}, $\triangle \Phi$ is uniformly
bounded. If $\Phi$ solves equation (\ref{E-3-1}), we have
\begin{eqnarray*}
|\Phi_{tk}|^2&=&\Phi_{tt}(1+\triangle
\Phi)+\frac{(1-s)}{s}(\Phi_{tt}+\triangle
\Phi)-\frac{\epsilon }{s}\\
&\leq&C(|\Phi_{tt}|+1)\\
&\leq&C\left(\max_{\p (X\times [0, 1])}|\Phi_{tt}|+1\right).
\end{eqnarray*}
To bound $\max_{\p (X\times [0, 1])}|\Phi_{tt}|$, observe that
\[
(s(1+\triangle \Phi)+(1-s))\Phi_{tt}=s|\Phi_{tk}|^2-(1-s)
\triangle \Phi+\epsilon.
\]
Since $\Phi(x, 0)=\phi_0, ~\Phi(x, 1)=\phi_1$, $1+\triangle \Phi$
is positive and uniformly bounded away from zero on the boundary.
Thus $s(1+\triangle \Phi)+(1-s)$ is uniformly bounded away from
zero on the boundary. It follows that
\[
\max_{\p (X\times [0, 1])}|\Phi_{tt}|\leq C\left(\max_{\p (X\times
[0, 1])}|\Phi_{tk}|^2+1\right).
\]
We finish the proof by showing \[\max_{\p (X\times [0,
1])}|\Phi_{tk}|\] is bounded. We just consider $|\Phi_{tk}|$ on
$X\times \{0\}$. The case on $X\times \{1\}$ follows similarly.
For any point $p$ on $X\times \{0\}$, we can choose a local
coordinates around $p$ such that $p=(0, \cdots, 0, 0)$, and
\[
g_{ij}(0)=\delta_{ij}, ~~~\p g_{ij}(0)=0.
\]
Let $B_0(\rho)\subset X$ be a small ball with radius $\rho.$ For
any $x\in B_{0}(\rho)$, we have
\[
g_{ij}(x)=(1+o(|x|))\delta_{ij}, ~~~\p g_{ij}(x)=o(|x|).
\]
Take $\Omega=B_{0}(\rho)\times [0, \kappa],$ where $\kappa$ is a
small positive number. Let $h$ be a function on $\Omega$ defined
by
\[ h=(\Phi-\phi_0)_{,k}
\]
for any $k=1, 2, \cdots, n$. Note $h$ is only a local defined
function. Take
\[
\tilde h=A(\Phi-\phi_0+At-At^2)+B(\sum x_i^2)+h,
\]
where $A\gg B$ are two big fixed positive constants. Since $h$ is
bounded, it is easy to see that $\tilde h\geq 0$ on $\p \Omega.$
Taking derivative, we have
\begin{eqnarray*}
h_{t}&=&\Phi_{tk},~~ h_{i}=\Phi_{ki}-(\phi_0)_{ki},~~
h_{tt}=\Phi_{ttk}\\
h_{ti}&=&\Phi_{tki},~~~ \triangle h=\triangle \Phi_k-{\triangle
\phi_0}_k.
\end{eqnarray*}
It follows from above and (\ref{e-4-5}) that
\begin{eqnarray*}
dP(h)&=&(s\Phi_{tt}+(1-s))\triangle h+(s(1+\triangle
\Phi)+(1-s))h_{tt}-2s\Phi_{ti}h_{ti}\nonumber\\
&=&(s\Phi_{tt}+(1-s))\triangle \Phi_k+(s(1+\triangle
\Phi)+(1-s))\Phi_{ttk}\nonumber\\
&&-2s\Phi_{ti}\Phi_{tik}-(s\Phi_{tt}+(1-s)){\triangle\phi_0}_k\nonumber\\
&=&(R_{kl}\Phi_l-{\triangle \phi_0}_k)(s\Phi_{tt}+(1-s))\nonumber\\
&\leq&C(s\Phi_{tt}+(1-s)).
\end{eqnarray*}
We can calculate
\begin{eqnarray*}
dP(\Phi)&=&(s\Phi_{tt}+(1-s))\triangle \Phi+(s(1+\triangle
\Phi)+(1-s))\Phi_{tt}-2s\Phi_{tk}^2\\
&=&2\epsilon-(1-s)(\Phi_{tt}+\triangle \Phi)-s\Phi_{tt}.
\end{eqnarray*}
It follows that
\[
dP(\Phi-\phi_0)=-(1+\triangle
\phi_0)(s\Phi_{tt}+(1-s))-(1-s)(\Phi_{tt}+1+\triangle
\Phi)+2\epsilon,
\]
and
\[dP(-At^2)=-2A(s(1+\triangle \Phi)+(1-s)).
\]
Since on $X\times \{0\}$, $s(1+\triangle \Phi)+(1-s)$ is uniformly
bounded away from zero, we can pick up $\kappa$ small enough such
that on $X\times [0, \kappa]$,
\[
A(s(1+\triangle \Phi)+(1-s))>2\epsilon.
\]
We should emphasize that $A, B$ are uniformly bounded even though
$\kappa$ might depend on $s$.  It follows that
\[dP(\Phi-\phi_0+At-At^2)<-(1+\triangle
\phi_0)(s\Phi_{tt}+(1-s)).
\]
And
\[\triangle (\sum x_i^2)=2n+o(\rho)< 2n+1,
\]
we can calculate
\[dP(\sum x_i^2)<(2n+1)(s\Phi_{tt}+(1-s)).
\]
It follows from above that in $\Omega$,
\[
dP(\tilde h)<\left(-A(1+\triangle
\phi_0)+(2n+1)B+C\right)(s\Phi_{tt}+(1-s))<0.
\]
By the maximum principle, we know that $\tilde h\geq 0$ in
$\Omega$. Since $\tilde h(0)=0$,
\[\frac{\p \tilde h}{\p t}|_{t=0}\geq 0.
\]
It follows that on $X\times \{0\}$,
\[\Phi_{tk}\geq -C.
\]
It follows similarly that
\[
\Phi_{tk}\leq C,
\] by taking $h=(\phi_0-\Phi)_k$, and
\[
\tilde h=A(\Phi-\phi_0+At-At^2)+B(\sum x_i^2)+h.
\]
\end{proof}

Hence we have proved the weakly $C^2$ bound of $\Phi$ for any smooth solution of \eqref{E-3-1}. When $\epsilon>0$, for any $s\in [0, 1]$, the equation \eqref{E-3-1} is uniformly elliptic.   We can rewrite the equation  for $s>0$
\begin{equation}\label{concave}
\log \left(\left(\Phi_{tt}+\frac{(1-s)}{s}\right)\left(\triangle\Phi+1+\frac{(1-s)}{s}\right)-|\nabla
\Phi_t|^2\right)=\log\frac{s\epsilon+(1-s)^2}{s^2}.
\end{equation}
It is clear that the equation is concave for any $s\in [0, 1]$ and $\epsilon>0$ by Lemma \ref{L-3-3}. It then  follows from   the Evans-Krylov theory to get that the H\"older estimates of $D^2\Phi$. In fact, the assumption of $\Phi\in C^{1, \beta}$ for some $\beta \in (0, 1)$ is sufficient to get global $C^{2, \alpha}$ regularity for a uniformly elliptic and concave fully nonlinear equation provided sufficient smooth boundary data and the right hand side (see Theorem 7.3 in \cite{ChenWu} for example). 

\begin{rem}In a previous version, first we prove that $|D^2\Phi|$ is bounded by  using the weak maximum principle (c.f. Gilbarg-Trudinger \cite{Giltru1998}, Section 9.7, Theorem 9.20 and Section 9.9 Theorem 9.26) and then use Evans-Krylov theory to obtain H\"older estimate of $|D^2\Phi|$. We would like to thank the referee for pointing  out that this not necessary for using Evans-Krylov theory. \end{rem}

Once we have the H\"older estimates of $D^2\Phi$, the  Schauder theory
gives all the higher derivatives bound. To see this, if $\Phi$
solves (\ref{E-3-1}), then we have \[dP(\Phi_t)=0.\] Since the
coefficients are all H\"older continuous, the Schauder
theory applies and we get that $\Phi_t$ is $C^{2, \alpha}$. The
similar discussion holds for $\Phi_{k}$, namely,
\[
dP(\Phi_k)=R_{ki}\Phi_i.
\]
Then the boot-strapping argument allows us to conclude that all
higher derivatives are bounded and $\Phi\in C^{\infty}$ follows.
We should emphasize that the above discussion holds only for
$\epsilon>0$. Summarize above, we have
\begin{lemma}\label{L-13}
If $\Phi$ solves the equation (\ref{E-3-1}), then the following
estimates hold independent of $s$,
\[
|D^l\Phi|\leq C=C(\epsilon, l)
\]
for any $l\in \mathbb{N}$.
\end{lemma}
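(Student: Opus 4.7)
The plan is to bootstrap from the weak $C^2$ estimates established in Lemmas \ref{L-4-4} and \ref{L-4-5} all the way up to $C^\infty$ regularity, making sure at every stage that the resulting bounds depend only on $\epsilon$ and the order $l$, not on $s\in[\delta,1]$. The starting point is that, for fixed $\epsilon>0$, the weak $C^2$ bounds together with the strict lower bound $\Phi_{tt}+1+\triangle\Phi>0$ of Proposition \ref{P-3-3} force the coefficients of $dP$ in \eqref{E-3-5} to lie between two positive constants independent of $s$. Thus the linearized operator $dP$ is uniformly elliptic with ellipticity constants controlled only in terms of $\epsilon$ and the universal weak $C^2$ bounds.

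The first step is to upgrade to $C^{2,\alpha}$. Rewriting the equation in the concave form \eqref{concave} and invoking Lemma \ref{L-3-3}, the equation on $\Phi$ is a uniformly elliptic fully nonlinear equation with concave structure, and the boundary data $\phi_0,\phi_1$ are smooth. By the Evans--Krylov theorem (applied in the form cited by the authors, e.g.\ Theorem 7.3 of \cite{ChenWu}), together with the $C^{1,\beta}$ information coming from the uniform $C^1$ bound of Lemma \ref{L-4-3} combined with standard interpolation, we obtain $\|\Phi\|_{C^{2,\alpha}}\leq C(\epsilon)$ with $C(\epsilon)$ independent of $s$.

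The second step is Schauder bootstrapping. Differentiating \eqref{E-3-1} in $t$ yields $dP(\Phi_t)=0$, and differentiating in a spatial direction $x_k$ gives $dP(\Phi_k)=R_{ki}\Phi_i$ plus terms already controlled in $C^{\alpha}$. Since we now have $\Phi\in C^{2,\alpha}$, the coefficients of $dP$ lie in $C^{\alpha}$, so these are linear elliptic equations with H\"older coefficients and smooth boundary data. Standard Schauder theory (interior plus boundary estimates) gives $\Phi_t,\Phi_k\in C^{2,\alpha}$, hence $\Phi\in C^{3,\alpha}$, with a bound depending on $\epsilon$ but not on $s$. Iterating the differentiation and reapplying Schauder yields $\Phi\in C^{l,\alpha}$ for every $l$, with the desired $s$-independent bound $|D^l\Phi|\leq C(\epsilon,l)$.

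The only subtlety — and essentially the single obstacle to watch for — is ensuring that the ellipticity and H\"older constants entering Evans--Krylov and subsequently Schauder remain uniform in $s\in[\delta,1]$. This is handled by the already-proved $s$-independent weak $C^2$ bounds together with the identity $(s\Phi_{tt}+(1-s))(s(1+\triangle\Phi)+(1-s))-s^2\Phi_{tk}^2 = s\epsilon+(1-s)^2\geq \min(\epsilon,1)\cdot(s+(1-s)^2)>0$ from Proposition \ref{P-3-3}, which gives a uniform positive lower bound on the determinant of the symbol and hence uniform ellipticity for fixed $\epsilon>0$. Once this is noted, every constant produced by Evans--Krylov and Schauder is automatically $s$-independent, and no further argument is required.
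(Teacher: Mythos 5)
Your proposal is correct and follows essentially the same route as the paper: uniform ellipticity for $\epsilon>0$ from the weak $C^2$ bounds and Proposition~\ref{P-3-3}, the concave reformulation \eqref{concave} plus Evans--Krylov (via the cited result in \cite{ChenWu}) to reach $C^{2,\alpha}$, and then Schauder bootstrapping on $dP(\Phi_t)=0$ and $dP(\Phi_k)=R_{ki}\Phi_i$. Your final paragraph just makes explicit the determinant identity that underlies the $s$-uniform ellipticity, which the paper leaves implicit.
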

We are in the position to prove Theorem \ref{T-3-1}.
\begin{proof}First we show that $s_0=1$. Otherwise, assume
$s_0<1$. Then the continuous family (\ref{E-3-1}) has a unique
solution for $0\leq s<s_0$. Consider a sequence of $s_i\rightarrow
s_0$ and the solutions $\Phi^{i}$. In light of Proposition
\ref{P-3-3} we can assume $s_i\geq \delta$. By the {\it a priori }
estimates derived above, for any $s_i$, the solutions $\Phi^{i}$
of (\ref{E-3-1}) satisfy
\[
|\Phi^{i}|, |\nabla \Phi^{i}|, |\Phi^{i}_t|, |\Phi^{i}_{tk}|,
|\Phi^{i}_{tt}|, |\triangle \Phi^{i}|\leq C.
\]
In particular, $dP$ is uniformly elliptic for any $s_i$. When
$\epsilon>0$, we still have the global H\"older estimates for
second derivatives. Then by the standard
elliptic argument, we obtain the higher derivatives estimates for
$\Phi^{i}.$ Then the solutions $\Phi^{i}$ converge to a smooth
solution for $s=s_0.$ Again $dP$ is a uniform elliptic operator at
$s=s_0$ and $dP$ has zero kernel with zero boundary data, one can
solve the equation (\ref{E-3-1}) for $s$ is sufficiently close to
$s_0$, but this contradicts with the definition of $s_0$. So
$s_0=1$. Moreover, the solutions $\Phi^{i}$ satisfy the estimates
\[
|\Phi^{i}|, |\nabla \Phi^{i}|, |\Phi^{i}_t|, |\Phi^{i}_{tk}|,
|\Phi^{i}_{tt}|, |\triangle \Phi^{i}|\leq C.
\]
The standard elliptic theory gives the higher derivatives
estimates.  So $\Phi^{i}$ sub-converge to a smooth function
$\Phi(x, t)$ solving the equation
\[
Q(D^2\Phi)=\Phi_{tt}(1+\triangle \Phi)-|\nabla
\dot{\Phi}|^2=\epsilon.
\]
In particular, we know that $\Phi_{tt}+1+\triangle \Phi$ is
bounded away from zero. When $s=1$, it follows that $\Phi_{tt},
1+\triangle \Phi>0$ and $\Phi\in \cH$.
\end{proof}

\section{Without assumption on Ricci}
In this section we drop the non-negative assumption of Ricci to
prove Theorem \ref{T-3-1}. Note that $C^{0}$ estimates, the boundary
$C^{1}, C^{2}$ estimates, and higher derivative estimates do not
depend on the Ricci curvature assumption.  We just need to
establish interior $C^{1}$ and  $C^{2}$ estimates.
\subsection{$C^1$ estimate}
\begin{lemma}If
$\Phi$ is a solution of (\ref{E-3-1}), then $\Phi$ satisfies the
following {\it a priori} estimates
\[
|\nabla \Phi|\leq C,~~|\Phi_{t}|\leq C,
\]
where $C$ is a universal constant, independent of $s$ and
$\epsilon$.
\end{lemma}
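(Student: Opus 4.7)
The plan is to adapt the proof of Lemma \ref{L-4-3} by applying the maximum principle to a weighted test function
\[
\tilde h = \log|\nabla\Phi|^2 - \alpha\Phi,
\]
with $\alpha>0$ a large constant to be chosen depending only on a lower bound $-K$ on the Ricci curvature of $(X,g)$ and on $\sup_{X\times[0,1]}|\Phi|$, which was already bounded in Proposition \ref{P-4-2}. The bound $|\Phi_t|\le C$ from Lemma \ref{L-4-3} transfers verbatim, since its proof relied only on the one-sided inequality $\Phi_{tt}+(1-s)/s>0$ (together with Proposition \ref{P-3-2}) and the boundary data, none of which involve the curvature. It therefore suffices to bound $|\nabla\Phi|$.

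Suppose $\tilde h$ attains an interior maximum at $p\in X\times(0,1)$. The first-order conditions $\nabla\tilde h(p)=0$ and $\tilde h_t(p)=0$ read
\[
2\Phi_i\Phi_{ik}=\alpha|\nabla\Phi|^2\Phi_k,\qquad 2\Phi_i\Phi_{it}=\alpha|\nabla\Phi|^2\Phi_t,
\]
so that $\nabla\Phi$ is an eigenvector of the spatial Hessian $(\Phi_{ik})$ at $p$ with eigenvalue $\tfrac12\alpha|\nabla\Phi|^2$; in particular, Cauchy--Schwarz gives $\sum_{i,k}\Phi_{ik}^2 \ge \tfrac{\alpha^2}{4}|\nabla\Phi|^4$. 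The second-order condition $dP(\tilde h)(p)\le 0$, valid by the ellipticity of $dP$ established in Proposition \ref{P-3-3}, combined with a computation analogous to \eqref{e-4-7} adapted to $\tilde h$, takes the form
\[
\frac{1}{|\nabla\Phi|^2}\,dP(|\nabla\Phi|^2) \;\le\; \alpha^2\,\xi^T\mathbf M\,\xi + \alpha\,dP(\Phi),
\]
where $\xi=(\Phi_t,\Phi_1,\ldots,\Phi_n)^T$ and $\mathbf M$ is the (positive definite) symbol matrix of $dP$. Using the identity \eqref{e-4-7} together with the Ricci lower bound, the left-hand side is bounded below by $\tfrac{A\alpha^2}{8}|\nabla\Phi|^2 - AK$ after invoking the eigenvector identity and absorbing the cross term $-2s\Phi_{tk}\Phi_{ti}\Phi_{ik}$ in $Q_{\mathrm{good}}$ via AM--GM; here $A = s\Phi_{tt}+(1-s)$.

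The principal technical obstacle is that the right-hand side coefficients $A$ and $B=s(1+\triangle\Phi)+(1-s)$ are not a priori bounded, so $\alpha^2\xi^T\mathbf M\xi$ naively contains unbounded factors. To circumvent this, one uses the equation in the form $AB-s^2|\nabla\Phi_t|^2 = s\epsilon+(1-s)^2$ (equivalent to \eqref{concave}) to eliminate $B$ in favor of $A$ and $|\nabla\Phi_t|^2$, combined with the identity $dP(\Phi)=s\Phi_{tt}\triangle\Phi+\epsilon-s|\nabla\Phi_t|^2$ already obtained from the equation and the bound $|\Phi_t|\le C$. After these substitutions every term on both sides carries a common factor of $A$; dividing through and choosing $\alpha$ larger than a universal multiple depending only on $K$ forces $|\nabla\Phi|(p)^2$ to be at most a universal constant, contradicting any assumption that $|\nabla\Phi|$ is large at the interior maximum. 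Hence $\tilde h$ attains its maximum on $X\times\{0,1\}$, where $|\nabla\Phi|$ is controlled by the $C^1$ norms of $\phi_0$ and $\phi_1$, and the desired universal bound on $|\nabla\Phi|$ follows.
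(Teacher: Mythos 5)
Your transfer of the $|\Phi_t|$ bound is fine, but the logarithmic ansatz $\tilde h=\log|\nabla\Phi|^2-\alpha\Phi$ fails at its central step. The quadratic form $A\Phi_{ik}^2+B\Phi_{tk}^2-2s\Phi_{tk}\Phi_{ti}\Phi_{ik}$ appearing in \eqref{e-4-7} is only positive \emph{semi}-definite: completing the square in the $\Phi_{tk}$ variables shows its coercivity coefficient on $\sum\Phi_{ik}^2$ is $(AB-s^2|\nabla\Phi_t|^2)/B=(s\epsilon+(1-s)^2)/B$, which is $\le A$ by the equation and can be arbitrarily smaller than $A$ before any control on $\triangle\Phi$ is available. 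Consequently your claimed lower bound $\tfrac{1}{|\nabla\Phi|^2}dP(|\nabla\Phi|^2)\geq\tfrac{A\alpha^2}{8}|\nabla\Phi|^2-CAK$ does not hold: the AM--GM absorption of $-2s\Phi_{tk}\Phi_{ti}\Phi_{ik}$ into $\tfrac{A}{2}\Phi_{ik}^2$ requires $s\epsilon+(1-s)^2\geq s^2|\nabla\Phi_t|^2$, which is false in general. In fact, decomposing the spatial Hessian along $\nabla\Phi/|\nabla\Phi|$ and using both eigenvector identities, the contribution from that direction to $\tfrac{2}{|\nabla\Phi|^2}\bigl(A\Phi_{ik}^2+B\Phi_{tk}^2-2s\Phi_{tk}\Phi_{ti}\Phi_{ik}\bigr)$ works out to be \emph{exactly} $\tfrac{\alpha^2}{2}\,\xi^T\mathbf M\xi$, i.e.\ half of the bad term $\alpha^2\xi^T\mathbf M\xi$ on the other side of your inequality; the orthogonal contribution is merely nonnegative and may vanish, so the inequality $dP(\tilde h)\le 0$ cannot be forced to fail. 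The proposed substitution does not repair this: $dP(\Phi)=s\Phi_{tt}\triangle\Phi+\epsilon-s|\nabla\Phi_t|^2$ contains $s\Phi_{tt}\triangle\Phi$, which is not proportional to $A$, so the terms do not share a common factor of $A$ after eliminating $B$.

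The paper circumvents this with the \emph{additive} test function $h=\tfrac12\bigl(|\nabla\Phi|^2+b\Phi^2\bigr)$ together with the renormalization $\Phi\mapsto\Phi+at+\text{const}$, which makes $\Phi_t^2\gg\Phi>0$ (legitimate since $|\Phi_t|$ is already bounded). Then $dP(\tfrac{b}{2}\Phi^2)=b\Phi\,dP(\Phi)+b\,\xi^T\mathbf M\xi$ contributes the symbol term with the \emph{favorable} sign, and the first-order condition $h_t=0$ converts the cross piece of $\xi^T\mathbf M\xi$ into $+2b^2s\Phi\Phi_t^2>0$. The Ricci term $\ge -CA|\nabla\Phi|^2$ is absorbed by $bA|\nabla\Phi|^2$ for $b>C$, and after rewriting $(1-s)(\Phi_{tt}+\triangle\Phi)$ in terms of $A+B$, the remaining pieces of $b\Phi\,dP(\Phi)$ are dominated by $(b-C)A|\nabla\Phi|^2+bB\Phi_t^2$ at a point where $|\nabla\Phi|^2$ is large, using $|\nabla\Phi|^2\gg\Phi$ and $\Phi_t^2\gg\Phi$. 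With the logarithm the symbol term lands on the wrong side of the inequality with coefficient $\alpha^2$ rather than $b$, and that obstruction is structural.
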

\begin{proof}
Note that $|\Phi_t|$ is uniformly bounded without Ricci assumption. If
$\Phi$ solves (\ref{E-3-1}) with boundary condition (\ref{E-3-2}),
then $\tilde \Phi=\Phi+at$ solves (\ref{E-3-1}) with  boundary
condition
\[
\tilde \Phi(x, 0)=\phi_0,~~\tilde\Phi(x, 1)=\phi_1+A,
\]
where $A$ is any constant. Thus we can assume that
\[\Phi_{t}^2\gg\Phi>0
\]
by choosing the normalizing condition on $\phi_0, \phi_1$.  Take
\[
h=\frac{1}{2}\left(|\nabla \Phi|^2+b\Phi^2\right).
\]
One can calculate that
\begin{eqnarray*}
\frac{1}{2}dP(|\nabla
\Phi|^2)&=&(s\Phi_{tt}+(1-s))\Phi^2_{ik}+(s(1+\triangle
\Phi)+(1-s))\Phi_{tk}^2-2s\Phi_{tk}\Phi_{ti}\Phi_{ik}\\
&&+(s\Phi_{tt}+(1-s))R_{ij}\Phi_i\Phi_j.\\
\end{eqnarray*}
And
\begin{eqnarray*}
dP(\frac{\Phi^2}{2})&=&(s\Phi_{tt}+(1-s))\triangle
\left(\frac{\Phi^2}{2}\right)+(s(1+\triangle
\Phi)+(1-s))\left(\frac{\Phi^2}{2}\right)_{tt}-2s\Phi_{tk}\left(\frac{\Phi^2}{2}\right)_{tk}\\&=&
(s\Phi_{tt}+(1-s))(\triangle \Phi \Phi+|\nabla
\Phi|^2)+(s(1+\triangle
\Phi)+(1-s))(\Phi_{tt}\Phi+\Phi_t^2)\\&&-2s\Phi_{tk}(\Phi\Phi_{tk}+\Phi_t\Phi_k)\\
&=&(s\Phi_{tt}+(1-s))\Phi\triangle \Phi+(s(1+\triangle
\Phi)+(1-s))\Phi\Phi_{tt}-2s\Phi\Phi_{tk}^2\\
&&+(s\Phi_{tt}+(1-s))|\nabla \Phi|^2+(s(1+\triangle
\Phi)+(1-s))\Phi_t^2-2s\Phi_{tk}\Phi_t\Phi_k\\
&=&2\epsilon\Phi-\Phi(s\Phi_{tt}+(1-s))-(1-s)\Phi(\Phi_{tt}+\triangle
\Phi)\\
&&+(s\Phi_{tt}+(1-s))|\nabla \Phi|^2+(s(1+\triangle
\Phi)+(1-s))\Phi_t^2-2s\Phi_{tk}\Phi_t\Phi_k.
\end{eqnarray*}

It follows that
\begin{eqnarray}\label{e-6-1}
dP(h)&=&(s\Phi_{tt}+(1-s))\Phi^2_{ik}+(s(1+\triangle
\Phi)+(1-s))\Phi_{tk}^2-2s\Phi_{tk}\Phi_{ti}\Phi_{ik}\nonumber\\
&&+(s\Phi_{tt}+(1-s))R_{ij}\Phi_i\Phi_j+b(s\Phi_{tt}+(1-s))|\nabla
\Phi|^2\nonumber\\&&-b\Phi((s\Phi_{tt}+(1-s))+(1-s)(\Phi_{tt}+\triangle
\Phi))\nonumber\\
&&+b(s(1+\triangle \Phi)+(1-s))\Phi_t^2-2bs\Phi_{tk}\Phi_t\Phi_k.
\end{eqnarray}
We want to show that
\[
h\leq \max_{\p (X\times [0, 1])}h+C,
\]
where $C$ is a universal constant. If not,  $h$ obtains its
maximum in the interior, at the point $p$. Note at the point $p$,
\[
h_t=\Phi_{tk}\Phi_k+b\Phi_t\Phi=0.
\]
By (\ref{e-6-1}), we get at the point $p$,
\begin{eqnarray}\label{e-6-2}
dP(h)&=&(s\Phi_{tt}+(1-s))\Phi^2_{ik}+(s(1+\triangle
\Phi)+(1-s))\Phi_{tk}^2-2s\Phi_{tk}\Phi_{ti}\Phi_{ik}\nonumber\\
&&+(s\Phi_{tt}+(1-s))R_{ij}\Phi_i\Phi_j+b(s\Phi_{tt}+(1-s))|\nabla
\Phi|^2\nonumber\\&&-b\Phi((s\Phi_{tt}+(1-s))+(1-s)(\Phi_{tt}+\triangle
\Phi))\nonumber\\
&&+b(s(1+\triangle \Phi)+(1-s))\Phi_t^2+2b^2s\Phi\Phi_t^2.
\end{eqnarray}
Since at the point $p$, $D^2h\leq 0$, then $dP(h)(p)\leq 0.$ By
(\ref{e-6-2}), we have  \begin{eqnarray*}
dP(h)&>&\left((b-C)|\nabla
\Phi|^2-b\Phi\right)(s\Phi_{tt}+(1-s))\\&&~~~+b\Phi_t^2(s(1+\triangle
\Phi)+(1-s))-b\Phi(1-s)(\Phi_{tt}+\triangle \Phi).
\end{eqnarray*}
We assume $s\geq \delta$ and $\Phi^2_t\gg \Phi>0.$ Pick constant
$b$ big enough, and if $|\nabla \Phi|^2$ is too big, we have
\[dP(h)>0.
\]
Contradiction.
\end{proof}

\subsection{$C^2$ estimate}
We have the following interior estimates
\begin{lemma}If
$\Phi$ is a solution of (\ref{E-3-1}), then $\Phi$ satisfies the
following {\it a priori} estimate
\[
0<\triangle \Phi+1+ \Phi_{tt}\leq C\left(\max_{\p (X\times [0,
1])}|\Phi_{tt}|+1\right),
\]
where $C$ is a universal constant, independent of $s$ and $\epsilon$.
\end{lemma}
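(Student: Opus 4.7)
The plan is to adapt the proof of Lemma~\ref{L-4-4} by enlarging the auxiliary function to absorb the now possibly-negative term $2sR_{ij}\Phi_{ti}\Phi_{tj}$. Set $w=\Phi_{tt}+1+\triangle\Phi$ (positive by Proposition~\ref{P-3-3}), and write $a=s\Phi_{tt}+(1-s)$, $b=s(1+\triangle\Phi)+(1-s)$ for the coefficients of the linearized operator $dP$. Exactly as in the proof of Lemma~\ref{L-4-4},
\[
dP(w+\lambda t^2) = 2\lambda b + 2sL + 2sM + 2sR_{ij}\Phi_{ti}\Phi_{tj},
\]
with $L, M \geq 0$ as in \eqref{e-3-19}. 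Without a sign on Ricci, only the crude estimate $|R_{ij}\Phi_{ti}\Phi_{tj}| \leq K\,|\nabla\Phi_t|^2$ is available, where $K=\sup_X|R_{ij}|$, so additional positive $|\nabla\Phi_t|^2$ contributions are required.

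Following the pattern of the preceding $C^1$ estimate, I would work with the augmented auxiliary function
\[
\tilde h = w + \tfrac{A}{2}\bigl(|\nabla\Phi|^2+\mu\Phi^2\bigr) + \lambda t^2,
\]
with $A, \mu, \lambda>0$ large constants to be chosen in sequence. From the computation \eqref{e-6-1}, $dP\bigl(\tfrac{1}{2}(|\nabla\Phi|^2+\mu\Phi^2)\bigr)$ contributes the positive quadratic form $a\Phi_{ik}^2 + b|\nabla\Phi_t|^2 - 2s\Phi_{tk}\Phi_{ti}\Phi_{ik}$ together with terms of order $\mu\Phi\cdot w$ plus quantities already bounded by the $C^0$ and $C^1$ estimates of the preceding subsection. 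Completing the square and using the uniform-ellipticity identity $ab-s^2|\nabla\Phi_t|^2 = s\epsilon+(1-s)^2$ from Proposition~\ref{P-3-3} yields
\[
a\Phi_{ik}^2 + b|\nabla\Phi_t|^2 - 2s\Phi_{tk}\Phi_{ti}\Phi_{ik} \geq \frac{s\epsilon+(1-s)^2}{a}\,|\nabla\Phi_t|^2.
\]
At an interior maximum $\tilde p$ of $\tilde h$, $dP(\tilde h)(\tilde p)\leq 0$. Combining the two $dP$ computations and selecting $A$ large to dominate $K$, then $\mu$ large to absorb the $\Phi$-terms, and finally $\lambda$ large, should force $dP(\tilde h)(\tilde p) > 0$ unless $w(\tilde p) \leq C(\max_\partial|\Phi_{tt}|+1)$, contradicting $dP(\tilde h)(\tilde p)\leq 0$.

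The main obstacle is the degeneracy of the positive $|\nabla\Phi_t|^2$ coefficient when $a$ becomes large: in the regime where $w$ large is driven by $\Phi_{tt}$ large (so $a$ large) rather than by $\triangle\Phi$ large (so $b$ large), the bound $(s\epsilon+(1-s)^2)/a$ degenerates while the Ricci bad term retains size $K|\nabla\Phi_t|^2$. In this regime one instead uses $|\nabla\Phi_t|^2 \leq ab/s^2$ from the ellipticity identity, so the bad term is controlled by $Kab/s$; this is in turn absorbed by the $2\lambda b$ contribution once $\lambda$ is chosen larger than a uniform multiple of $Ka/s$, itself bounded by $K(sw+1-s)/s$. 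A conceptually cleaner, case-free alternative is to work with $\log w$ in place of $w$ and add a Yau-type factor $-B\Phi$; the first-order conditions at the interior maximum become $\nabla w/w = B\nabla\Phi$ and $w_t/w = B\Phi_t - 2\lambda t$, converting the awkward $|\nabla w|^2/w^2$ quotients into bounded $|\nabla\Phi|^2$-terms, so that the argument closes uniformly.

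Once $\tilde h$ is shown to attain its maximum on $\partial(X\times[0,1])$, the conclusion follows: on the boundary $\triangle\Phi$ is fixed (equal to $\triangle\phi_0$ or $\triangle\phi_1$) and $|\nabla\Phi|,|\Phi|$ are bounded by the $C^1$ estimate of the preceding subsection, so only $\Phi_{tt}$ contributes, giving $w \leq C(\max_\partial|\Phi_{tt}|+1)$.
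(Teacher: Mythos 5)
Your primary (additive) plan has a gap that you yourself half-notice: to absorb the bad Ricci term $-2sK|\nabla\Phi_t|^2$ in the regime where $a=s\Phi_{tt}+(1-s)$ is large, you need $\lambda \gtrsim Ka/s$, and you observe that $a$ is controlled only in terms of $w=\Phi_{tt}+1+\triangle\Phi$ --- but $w$ is precisely the quantity you are trying to bound, and $\lambda$ must be fixed \emph{before} locating the interior maximum of $\tilde h$. So the additive argument is circular and does not close. The positive contribution $A\frac{s\epsilon+(1-s)^2}{a}|\nabla\Phi_t|^2$ from the gradient square degenerates in exactly the same regime, for the same reason, so it cannot rescue the estimate either.

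Your proposed ``case-free alternative'' --- replacing $w$ by $\log w$ and adding a Yau-type factor $-B\Phi$ plus $\lambda t^2$ --- is, up to cosmetics, exactly the paper's proof. The paper takes $\tilde h = f\exp(h)$ with $f=w$ and $h=\tfrac12bt^2-b\Phi+A$; maximizing $f\exp(h)$ is the same as maximizing $\log w + h$. The reason this closes where the additive version does not: at an interior maximum the first-order conditions $f_t+fh_t=0$, $f_k+fh_k=0$ let one compute $dP(\tilde h)=\exp(h)\bigl(f\,dP(h)+dP(f)-Q(h,f)\bigr)$, where $Q(h,f)=f\bigl(a\,h_k^2+b\,h_t^2-2s\Phi_{tk}h_th_k\bigr)$ is a quadratic form in the \emph{bounded} first derivatives $h_t,h_k$. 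Since $a+b=sw+2(1-s)$ with $a,b>0$ and $s^2|\nabla\Phi_t|^2=ab-s\epsilon-(1-s)^2$, one has $a,b\le C(w+1)$ and $|\nabla\Phi_t|\le C(w+1)$; hence both $Q(h,f)$ and the Ricci bad term $2sR_{ij}\Phi_{ti}\Phi_{tj}$ are $O(w^2)$. Meanwhile $dP(h)=b(w-2s\epsilon)$, so the multiplicative structure produces a term $f\,dP(h)\ge bw^2-2bw$, which dominates $O(w^2)$ once the \emph{fixed} constant $b$ in $h$ is chosen large. This is the point: in the multiplicative (equivalently logarithmic) formulation, the good term automatically scales like $w^2$ alongside the bad ones, so no $\lambda$ depending on $w$ is needed. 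To turn your proposal into a proof, carry this computation out; the sketch you give of the first-order conditions is the right starting point, but the balance of $w^2$ terms is the content of the argument and must be verified.
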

\begin{proof}
Denote
\[
f=\Phi_{tt}+1+\triangle \Phi, ~~~h=\frac{1}{2}b t^2-b\Phi+A,
\]
where $b, A$ are positive constant such that $h>0.$ Take \[ \tilde
h=f\exp (h).
\]
To get the interior estimates we want to show that
\begin{equation}\label{e-6-3}
\tilde h\leq \max_{\p X\times[0, 1]}\tilde h+C \end{equation} for
some uniformly bounded constant $C$. Note that if (\ref{e-6-3})
holds then one can proceed to prove the boundary estimate saying
that $f$ is actually uniformly bounded. If (\ref{e-6-3}) does not
hold,  $\tilde h$  obtains its maximum in the interior, at the
point $p$. It follows that $D^2\tilde h(p)\leq 0$ and $dP(\tilde
h)(p)\leq 0.$ Taking derivative,
\[
\tilde h_t=\exp(h)(fh_t+f_t), ~\tilde h_{k}=\exp(h)(f_{k}+fh_k),
\]
and
\[
\tilde h_{tt}=\exp(h)(fh_t^2+2f_th_t+fh_{tt}+f_{tt}),~ \tilde
h_{kk}=\exp(h)(fh_k^2+2f_kh_k+fh_{kk}+f_{kk}).
\]
Also we have
\[
\tilde h_{tk}=\exp(h)(f_kh_t+f_th_k+fh_th_k+fh_{tk}).
\]
Note also at the point $p$, we have
\[
\tilde h_t=0, ~~\tilde h_k=0.
\]
It follows that
\[
fh_t+f_t=0, ~~fh_k+f_k=0.
\]
One can calculate that at the point,
\begin{eqnarray}\label{e-6-4}
dP(\tilde h)&=&(s\Phi_{tt}+(1-s))\triangle \tilde h+(s(1+\triangle
\Phi)+(1-s))\tilde
h_{tt}-2s\Phi_{tk}\tilde h_{tk}\nonumber\\
&=&(s\Phi_{tt}+(1-s))\exp(h)(fh_k^2+2f_kh_k+fh_{kk}+f_{kk})\nonumber\\
&&+(s(1+\triangle
\Phi)+(1-s))\exp(h)(fh_t^2+2f_th_t+fh_{tt}+f_{tt})\nonumber\\
&&-2s\Phi_{tk}\exp(h)(f_kh_t+f_th_k+fh_th_k+fh_{tk})\nonumber\\
&=&\exp(h)f((s\Phi_{tt}+(1-s))h_{kk}+(s(1+\triangle
\Phi)+(1-s))h_{tt}-2s\Phi_{tk}h_{tk})\nonumber\\
&&+\exp(h)((s\Phi_{tt}+(1-s))f_{kk}+(s(1+\triangle
\Phi)+(1-s))f_{tt}-2s\Phi_{tk}f_{tk})\nonumber\\
&&+(s\Phi_{tt}+(1-s))\exp(h)(fh_k^2+2f_kh_k)+(s(1+\triangle
\Phi)\nonumber\\
&&+(1-s))\exp(h)(fh_t^2+2f_th_t)-2s\Phi_{tk}\exp(h)(f_kh_t+f_th_k+fh_th_k)\nonumber\\
&=&\exp(h)\left(fdP(h)+dP(f)-Q(h, f)\right),
\end{eqnarray}
where
\begin{eqnarray}\label{e-6-5}
Q(h, f)&=&f((s\Phi_{tt}+(1-s))h_k^2+(s(1+\triangle
\Phi)+(1-s))h_t^2-2s\Phi_{tk}h_th_k).\nonumber\\
\end{eqnarray}
Now we carry out $dP(f), dP(h)$. We know that (c. f.
(\ref{e-3-19}))
\[dP(f)=s(L+M)+ 2sR_{ij}\Phi_{ti}\Phi_{tj}\geq 2sR_{ij}\Phi_{ti}\Phi_{tj}.
\]
It is easy to get that at the point $p$,
\begin{equation}\label{e-6-13}
dP(h)=b(\Phi_{tt}+1+\triangle \Phi)-2bs\epsilon\geq
b(\Phi_{tt}+1+\triangle \Phi)-2b.
\end{equation}
 By (\ref{e-6-4}) and (\ref{e-6-13}) we have
\begin{eqnarray}\label{e-6-14}dP(\tilde h)&\geq &b\exp(h)(\Phi_{tt}+1+\triangle
\Phi)^2-2b(\Phi_{tt}+1+\triangle\Phi)\nonumber\\&&~+2s\exp(h)R_{ij}\Phi_{ti}\Phi_{tj}-\exp(h)Q(h, f).
\end{eqnarray}
It is easy to see that
\[
R_{ij}\Phi_{ti}\Phi_{tj}>-C(\Phi_{tt}+1+\triangle\Phi )^2.
\]
By (\ref{e-6-5}) it is clear that
\[
Q(h, f)\leq C(\Phi_{tt}+1+\triangle \Phi)^2.
\]
It follows that
\[
dP(\tilde h)>\exp(h)\left\{(b-C)(\Phi_{tt}+1+\triangle
\Phi)^2-2b(\Phi_{tt}+1+\triangle\Phi)\right\}.
\]
We can assume that
\[
\Phi_{tt}+1+\triangle \Phi> \frac{2b}{b-C}
\]
at the point $p$. Contradiction with $dP(\tilde h)\leq 0$ at the
point $p$.
\end{proof}

\section{The space $\cH$}
In this section, we discuss the property of the space  $\cH$ by
using the weak solution of the geodesic equation. The discussion
below follows closely Chen \cite{Chen2000}, Calabi-Chen
\cite{CC2002} in the case of the space of K\"ahler metrics.

\subsection{Uniqueness of the weak solution}
For the weak solution obtained for the geodesic equation, it fits
the standard notion of  {\it viscosity solution} developed in fully
nonlinear equations. But since we
obtain a global approximation of the weak solution, we will use
the following approximation instead of  viscosity solution.
\begin{defi} A  continuous function $\Phi$ in $X\times [0, 1]$ is a
weak $C^0$ solution to the geodesic equation (\ref{E-1-1}) with
prescribed boundary data if for any $\epsilon>0$, there exists a
smooth function $\tilde \Phi\in \cH$ such that
\[
|\Phi-\tilde\Phi|<\delta=\delta(\epsilon)
\]
and $\tilde \Phi$ solves
\[
Q(D^2\tilde \Phi)=\epsilon
\] with the same boundary data, where $\delta=\delta(\epsilon)$ is
a constant depending on $\epsilon$ and $\delta\rightarrow 0$ when
$\epsilon\rightarrow 0.$
\end{defi}
Obviously, the weak solution we obtain is a weak $C^0$ solution of
the geodesic equation (\ref{E-1-1}).

\begin{theo}\label{T-5-2}Suppose $\Phi, \Psi$ are two $C^0$ weak solutions
to the geodesic equation (\ref{E-1-1}) with prescribed boundary
data $(\phi_0, \phi_1)$ and $(\psi_0, \psi_1)$. Then
\[
\max_{X\times[0, 1]}|\Phi-\Psi|\leq \max_{\p (X\times[0, 1])}
(|\phi_0-\psi_0|, |\phi_1-\psi_1|).
\]
\end{theo}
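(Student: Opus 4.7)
The plan is to reduce to the case of smooth $\epsilon$-approximations, establish the comparison estimate for smooth solutions via the concavity of $\log Q$ combined with the weak maximum principle, and then let $\epsilon\to 0$. The operator $Q(D^2\Phi)=\Phi_{tt}(1+\triangle\Phi)-|\nabla\Phi_t|^2$ is of course not itself elliptic once we difference two solutions, but its logarithm is concave (Lemma \ref{L-3-3}), and this will be our tool to convert the equality of two solutions into linear elliptic inequalities on their difference.

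First, for the smooth case, fix $\epsilon>0$ and let $\tilde\Phi,\tilde\Psi\in\cH$ be the smooth solutions of $Q(D^2\tilde\Phi)=Q(D^2\tilde\Psi)=\epsilon$ with boundary values $(\phi_0,\phi_1)$ and $(\psi_0,\psi_1)$ respectively, provided by the definition of weak $C^0$ solution. Set $w=\tilde\Phi-\tilde\Psi$. By concavity of $\log Q$ (Lemma \ref{L-3-3}) applied to $D^2\tilde\Phi$ and $D^2\tilde\Psi$,
\[
0=\log Q(D^2\tilde\Psi)-\log Q(D^2\tilde\Phi)\leq \frac{1}{\epsilon}L^{\tilde\Phi}(-w),
\qquad
0=\log Q(D^2\tilde\Phi)-\log Q(D^2\tilde\Psi)\leq \frac{1}{\epsilon}L^{\tilde\Psi}(w),
\]
where, for any $u\in\cH$, the linearization is
\[
L^{u}(v)=(1+\triangle u)v_{tt}+u_{tt}\triangle v-2u_{tk}v_{tk}.
\]
Hence $L^{\tilde\Phi}(w)\leq 0$ and $L^{\tilde\Psi}(w)\geq 0$. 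By Proposition \ref{P-3-3} (at $s=1$), both $L^{\tilde\Phi}$ and $L^{\tilde\Psi}$ are linear elliptic operators on $X\times[0,1]$ with no zeroth order term (their ellipticity discriminant is exactly $Q(D^2\cdot)=\epsilon>0$, and by the a priori estimates in Theorem \ref{T-1-4} the coefficients are bounded).

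Applying the classical weak maximum principle for uniformly elliptic operators without zeroth order term, the inequality $L^{\tilde\Psi}w\geq 0$ gives $\sup_{X\times[0,1]}w=\sup_{\partial(X\times[0,1])}w$, and $L^{\tilde\Phi}w\leq 0$ gives $\inf_{X\times[0,1]}w=\inf_{\partial(X\times[0,1])}w$. Since $\tilde\Phi$ and $\tilde\Psi$ take the prescribed boundary data, this yields
\[
\max_{X\times[0,1]}|\tilde\Phi-\tilde\Psi|\leq \max_{\partial(X\times[0,1])}(|\phi_0-\psi_0|,\ |\phi_1-\psi_1|).
\]

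Finally, the triangle inequality and the estimate $|\Phi-\tilde\Phi|,|\Psi-\tilde\Psi|<\delta(\epsilon)$ from the definition of weak $C^0$ solution give
\[
\max_{X\times[0,1]}|\Phi-\Psi|\leq 2\delta(\epsilon)+\max_{\partial(X\times[0,1])}(|\phi_0-\psi_0|,\ |\phi_1-\psi_1|),
\]
and letting $\epsilon\to 0$ (so $\delta(\epsilon)\to 0$) finishes the proof. The only substantive point is the identification of $dQ_{D^2\tilde\Phi}$ and $dQ_{D^2\tilde\Psi}$ as elliptic operators via the concavity of $\log Q$; once that is made, the remainder is the standard weak maximum principle plus passage to the limit. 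I expect no serious obstacle, since the a priori bounds already in the paper ensure uniform boundedness of the linearized coefficients and $Q=\epsilon>0$ supplies pointwise (non-uniform in $\epsilon$, but positive) ellipticity, which is all the weak maximum principle needs.
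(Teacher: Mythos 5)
Your proof is correct, and it reaches the estimate for the smooth $\epsilon$-approximations by a route that is genuinely different in technical detail from the paper's, though it ultimately rests on the same concavity structure. The paper argues bare-handed: if $\max(\tilde\Phi-\tilde\Psi)$ exceeds the boundary maximum, it perturbs by $\lambda t(1-t)$ to force an interior maximum, reads off a sign for $D^2\tilde\Psi-D^2\tilde\Phi$ there, and contradicts Donaldson's algebraic Lemma \ref{L-3-2} (namely $Q(A-B)\le 0$ when $Q(A)=Q(B)>0$ with positive $00$-entries). You instead linearize: since $\log Q$ is concave on the convex domain $\{x>0,\,\sum y_i>0,\,x\sum y_i-\sum z_i^2>0\}$ (Lemma \ref{L-3-3}), evaluating the concavity inequality at $D^2\tilde\Phi$ and $D^2\tilde\Psi$ and using $Q(D^2\tilde\Phi)=Q(D^2\tilde\Psi)=\epsilon$ gives $L^{\tilde\Phi}(w)\le 0$ and $L^{\tilde\Psi}(w)\ge 0$, after which the classical weak maximum principle for second-order elliptic operators with no lower-order terms finishes the smooth case. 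The two lemmas are stated in the paper to be equivalent, so the underlying engine is the same; your version is the standard ``comparison principle for concave fully nonlinear equations,'' while the paper's version avoids passing to the linearization. Two small remarks: your citation of Theorem \ref{T-1-4} for boundedness of the coefficients is unnecessary here (pointwise ellipticity, which Proposition \ref{P-3-3} at $s=1$ gives from $Q=\epsilon>0$ with $1+\triangle\tilde\Phi>0$ and hence $\tilde\Phi_{tt}>0$, is all the weak maximum principle needs on the compact domain $X\times[0,1]$); and the passage to $\epsilon\to 0$ via the $2\delta(\epsilon)$ error is exactly as in the paper.
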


\begin{proof} For any $\epsilon>0$, we have $\tilde \Phi, \tilde
\Psi$ solving
\[
Q(D^2\tilde\Phi)=Q(D^2\tilde\Psi)=\epsilon
\]
with respect to the boundary condition. Then we know that
\[
|\Phi-\tilde\Phi|<\delta, ~~~|\Psi-\tilde\Psi|<\delta.
\]
We want to show that
\[
\max_{X\times[0, 1]}|\tilde\Phi-\tilde\Psi|\leq \max_{\p
(X\times[0, 1])} (|\phi_0-\psi_0|, |\phi_1-\psi_1|).
\]
If the boundary conditions are the same, namely, $\phi_0=\psi_0,
\phi_1=\psi_1$, we have $\tilde\Phi=\tilde\Psi$. In general, we
can use the same trick as in Proposition \ref{P-4-2} to get the
inequality above. If $\max(\tilde\Phi-\tilde\Psi)>\max_{\p
(X\times[0, 1])} (|\phi_0-\psi_0|, |\phi_1-\psi_1|)$, then
$\tilde\Phi-\tilde\Psi-\lambda t(1-t)$ obtain its maximum in
interior, where $\lambda>0$ is small enough. It follows that
$D^2\Psi>D^2\Phi$. But $Q(D^2\Phi)=Q(D^2\Psi)=\epsilon$ implies
that $Q(D^2\Psi-D^2\Phi)<0$, contradiction. Switch $\tilde\Phi,
\tilde\Psi $, we can get the inequality as promised.
\\

It follows that
\[
|\Phi-\Psi|<\max_{\p (X\times[0, 1])}
(|\tilde\phi_0-\tilde\psi_0|,
|\tilde\phi_1-\tilde\psi_1|)+2\delta.
\]
Let $\epsilon\rightarrow 0$, we get that
\[
|\Phi-\Psi|<\max_{\p (X\times[0, 1])} (|\phi_0-\psi_0|,
|\phi_1-\psi_1|).
\]
\end{proof}
As a direct consequence we have
\begin{cor}The weak solution of the geodesic equation (\ref{E-1-1})
is unique with the fixed boundary data.
\end{cor}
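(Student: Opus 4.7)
The plan is to obtain the corollary as an immediate consequence of Theorem \ref{T-5-2}. Given two weak $C^0$ solutions $\Phi$ and $\Psi$ of the geodesic equation \eqref{E-1-1} sharing the same boundary data $\phi_0, \phi_1$, I would simply set $\psi_0 = \phi_0$ and $\psi_1 = \phi_1$ in the statement of Theorem \ref{T-5-2}. The right-hand side
\[
\max_{\partial(X\times[0,1])}(|\phi_0-\psi_0|,|\phi_1-\psi_1|)
\]
vanishes identically, so the theorem yields $\max_{X\times[0,1]}|\Phi-\Psi|\le 0$, forcing $\Phi\equiv\Psi$.

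There is no substantive obstacle here, since the whole content has been packaged into Theorem \ref{T-5-2} via the comparison principle applied to the $\epsilon$-approximants $\tilde\Phi$ and $\tilde\Psi$. The only thing worth emphasizing in the write-up is that the definition of weak $C^0$ solution guarantees that for every $\epsilon > 0$ one may choose smooth approximants $\tilde\Phi,\tilde\Psi\in\cH$ with $Q(D^2\tilde\Phi)=Q(D^2\tilde\Psi)=\epsilon$ and with the same boundary data as $\Phi,\Psi$ respectively, so that the hypotheses of Theorem \ref{T-5-2} apply uniformly in $\epsilon$. Letting $\epsilon\to 0$ is already carried out in the proof of Theorem \ref{T-5-2}, so no extra limiting argument is needed at this stage.

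Thus the proof is essentially a one-line invocation of Theorem \ref{T-5-2} with equal boundary data, yielding uniqueness. If a slightly more explicit presentation were desired, I would phrase it as: assume for contradiction that $\Phi\not\equiv\Psi$, pick a point where $|\Phi-\Psi|>0$, and observe that this contradicts the zero upper bound produced by Theorem \ref{T-5-2}.
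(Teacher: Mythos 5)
Your proposal is correct and is exactly the argument the paper intends: the corollary is stated in the paper as a direct consequence of Theorem \ref{T-5-2}, obtained by taking equal boundary data so that the right-hand side of the estimate vanishes. Nothing more is needed.
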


\subsection{$\cH$ is a metric space}
In this section we want to prove that $\cH$ is a metric space and
the weak $C^2$ geodesic realizes the global minimum of the length
over all paths. For simplicity, for any $\Phi\in \cH$, we fix the
normalization
\[
\int_M\Phi dg=0.
\]
Since the solution we obtain does not have enough derivatives, we
use the solution
\[
Q(D^2\tilde\Phi)=\epsilon
\] to approximate the weak solution.
\begin{defi}
Let $\phi_1, \phi_2$ be two points in $\cH$. Then we know there
exists a unique weak geodesic connecting these two points. Define
the geodesic distance \[d(\phi_1,
\phi_2)=\int_0^1dt\sqrt{\int_M\dot\Phi^2\left(1+\triangle
\Phi\right)dg}
\] as the length of this
geodesic, where $\Phi$ solves the geodesic equation weakly with
boundary condition $\phi_1, \phi_2$.
\end{defi}

Then we have the following
\begin{lemma}\label{L-7-6}
Suppose $\Phi(t)$ is a weak $C^2$ geodesic in $\cH$ from $0$ to
$\phi$ and we normalize $\Phi$ such that
\[
\int_M\Phi dg=0.
\]
Also we can assume that $Vol (M, g)=1$. Then we have
\[d(0, \phi)\geq\max\left(\sqrt{\int_{\phi>0}\phi^2(1+\triangle\phi)dg}, \sqrt{\int_{\phi<0}\phi^2 dg}\right).\]
\end{lemma}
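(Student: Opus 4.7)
Plan for Lemma \ref{L-7-6}. The approach is to work with the smooth approximations $\Phi_\epsilon$ from Theorem \ref{T-3-1} that solve $Q(D^2\Phi_\epsilon)=\epsilon$ with $\Phi_\epsilon(\cdot,0)=0$ and $\Phi_\epsilon(\cdot,1)=\phi$, combine an approximate conservation law for the energy with the strict $t$-convexity of $\Phi_\epsilon$, and then pass to the limit $\epsilon\to 0$.

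First, set $E_\epsilon(t)=\int_M\dot\Phi_\epsilon^2(1+\triangle\Phi_\epsilon)\,dg$. Differentiate in $t$; using $\ddot\Phi_\epsilon(1+\triangle\Phi_\epsilon)=|\nabla\dot\Phi_\epsilon|^2+\epsilon$ together with the integration by parts $\int_M\dot\Phi_\epsilon^2\triangle\dot\Phi_\epsilon\,dg=-2\int_M\dot\Phi_\epsilon|\nabla\dot\Phi_\epsilon|^2\,dg$, the two cubic contributions cancel and one obtains the near-conservation law $E_\epsilon'(t)=2\epsilon\int_M\dot\Phi_\epsilon\,dg$. Since $|\dot\Phi_\epsilon|\le C$ uniformly by Theorem \ref{T-1-4}, this yields $|E_\epsilon(t)-E_\epsilon(s)|\le C\epsilon$, and consequently
\[
L_\epsilon:=\int_0^1\sqrt{E_\epsilon(t)}\,dt\ \ge\ \max\!\Bigl(\sqrt{E_\epsilon(0)-C\epsilon},\ \sqrt{E_\epsilon(1)-C\epsilon}\Bigr).
\]

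Second, the equation gives $\ddot\Phi_\epsilon(1+\triangle\Phi_\epsilon)=|\nabla\dot\Phi_\epsilon|^2+\epsilon>0$ and $1+\triangle\Phi_\epsilon>0$, so $\ddot\Phi_\epsilon>0$; hence $t\mapsto\Phi_\epsilon(x,t)$ is strictly convex for every fixed $x$. Sandwiching the secant slope by the endpoint tangents,
\[
\dot\Phi_\epsilon(x,1)\ \ge\ \Phi_\epsilon(x,1)-\Phi_\epsilon(x,0)=\phi(x),\qquad \dot\Phi_\epsilon(x,0)\ \le\ \phi(x).
\]
At $t=1$, $1+\triangle\Phi_\epsilon(\cdot,1)=1+\triangle\phi$ is positive, so restricting the integral to $\{\phi>0\}$ and using $\dot\Phi_\epsilon(\cdot,1)^2\ge\phi^2$ there yields $E_\epsilon(1)\ge\int_{\phi>0}\phi^2(1+\triangle\phi)\,dg$. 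At $t=0$, $\Phi_\epsilon(\cdot,0)=0$ forces $1+\triangle\Phi_\epsilon(\cdot,0)=1$, and restricting to $\{\phi<0\}$ with $\dot\Phi_\epsilon(\cdot,0)^2\ge\phi^2$ gives $E_\epsilon(0)\ge\int_{\phi<0}\phi^2\,dg$.

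Finally, pass to the limit. By the uniform a priori estimates of Theorem \ref{T-1-4}, a subsequence of $\Phi_\epsilon$ converges in $C^{1,\alpha}$ to the weak $C^2$ geodesic $\Phi$, with $\dot\Phi_\epsilon\to\dot\Phi$ uniformly and $\triangle\Phi_\epsilon\rightharpoonup\triangle\Phi$ weakly-$\ast$ in $L^\infty$; the strong/weak pairing then gives the pointwise-in-$t$ convergence $E_\epsilon(t)\to\int_M\dot\Phi^2(1+\triangle\Phi)\,dg$, and bounded convergence yields $L_\epsilon\to d(0,\phi)$. Letting $\epsilon\to 0$ in the two lower bounds from the previous paragraph produces both inequalities comprising the claimed maximum. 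The only genuinely non-routine step is the one-sided convexity bound: it converts the sign of $\phi$ at a point into pointwise control on $|\dot\Phi_\epsilon|$ at the matching endpoint, and, together with the fact that the weight $1+\triangle\Phi_\epsilon$ equals $1$ at $t=0$ and $1+\triangle\phi$ at $t=1$, exactly accounts for the two terms in the asserted maximum.
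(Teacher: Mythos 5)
Your proposal follows the same strategy as the paper's proof: approximate by solutions of $Q(D^2\Phi_\epsilon)=\epsilon$, establish the near-conservation $|E_\epsilon'(t)|\leq C\epsilon$, exploit strict $t$-convexity ($\ddot\Phi_\epsilon>0$) to bound $\dot\Phi_\epsilon$ at the two endpoints by $\phi$, and pass to the limit. The only differences are cosmetic — you spell out the integration-by-parts cancellation behind $E_\epsilon'(t)=2\epsilon\int_M\dot\Phi_\epsilon\,dg$ and the $C^{1,\alpha}$/weak-$\ast$ convergence in the final limit, which the paper simply asserts as "easy to see."
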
 In other words, the length of any weak $C^2$ geodesic is strictly
positive.
\begin{proof}
Suppose $\tilde \Phi$ is the solution of
\[
Q(D^2\tilde \Phi)=\epsilon
\]
with the same boundary condition. It is easy to see that
\[
d(0, \phi)=\lim_{\epsilon\rightarrow 0}d_{\epsilon}(0, \phi),
\]
where
\[d_{\epsilon}=\int_0^1dt\sqrt{\int_M\dot{\tilde\Phi}^2\left(1+\triangle
\tilde\Phi\right)dg}.
\]
Denote the energy element as
\[
E_{\epsilon}(t)=\int_M\dot{\tilde\Phi}^2\left(1+\triangle
\tilde\Phi\right)dg.
\]
It is easy to see that
\[
\left|\frac{d}{dt}E_\epsilon(t)\right|\leq C\epsilon,
\]
where $C$ is a universal constant. It follows that
\[
\left|E_{\epsilon}(t_1)-E_{\epsilon}(t_2)\right|\leq C\epsilon
|t_1-t_2|\leq C\epsilon.
\]
In particular, we have $E(t)$ is a constant for any $t$ even
$\Phi$ has no enough derivative. Note that if $\Phi$ is smooth this is
a direct consequence of the geodesic equation. Since $\ddot{\tilde
\Phi}>0$, it follows that
\[
\dot{\tilde\Phi}(0)<\phi<\dot{\tilde \Phi}(1).
\]
It implies
\[
E_{\epsilon}(0)=\int_M\dot{\tilde\Phi}^2(0)dg>\int_{\phi<0}\phi^2dg
\] and
\[
E_{\epsilon}(1)=\int_M\dot{\tilde\Phi}^2(1)(1+\triangle\phi)dg>\int_{\phi>0}\phi^2(1+\triangle\phi)dg.
\]
Thus,
\[
d_{\epsilon}(0, \phi)>\max
\left(\sqrt{\int_{\phi<0}\phi^2dg-C\epsilon},
\sqrt{\int_{\phi>0}\phi^2(1+\triangle\phi)dg-C\epsilon}\right).
\]
So we can get
\[
d(0,
\phi)\geq\max\left(\sqrt{\int_{\phi>0}\phi^2(1+\triangle\phi)dg},
\sqrt{\int_{\phi<0}\phi^2 dg}\right).
\]

\end{proof}
We need the next geodesic approximation lemma.
\begin{lemma}\label{L-7-7}Suppose $C_i: \phi_{i}(s): [0, 1]\rightarrow \cH (i=0,
1)$ are tow smooth curves in $\cH$. For any $\epsilon>0$, there
exist two parameter families of smooth curves $\C(t, s, \epsilon):
\Phi(t, s, \epsilon)$ solving
\[
Q(D^2\Phi)=\epsilon
\]with boundary condition
\[
\Phi(0, s, \epsilon)=\phi_0(s), ~~\Phi(1, s, \epsilon)=\phi_1(s)
\]
satisfying the following:\\

1. There exists a uniformly bounded constant $C=C(M, \phi_0,
\phi_1)$ such that
\[
\left|\Phi\right|+\left|\frac{\p \Phi}{\p t}\right|+\left|\frac{\p
\Phi}{\p s}\right|\leq C; ~0<\frac{\p^2\Phi}{\p t^2}\leq C;~
\frac{\p^2\Phi}{\p s^2}\leq C.\]

2. The convex curve $C(s, \epsilon)$ converges to the unique
geodesic between $\phi_0(s)$ and $\phi_1(s)$ in the weak $C^2$
topology.\\

3. Define the energy element along $C(s, \epsilon)$ as
\[
E(t, s, \epsilon)=\int_M\left|\frac{\p \Phi}{\p
t}\right|^2(1+\triangle \Phi)dg.
\]
There exists a uniform constant $C$ \[\left|\frac{\p E}{\p
t}\right|\leq C\epsilon.\]
\end{lemma}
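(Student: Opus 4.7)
\emph{Existence and $t$-direction bounds.} For each fixed $s\in[0,1]$, I apply Theorem \ref{T-3-1} to the pair $(\phi_0(s),\phi_1(s))$ to obtain a unique smooth $\Phi(\cdot,\cdot,s,\epsilon)$ solving $Q(D^2\Phi)=\epsilon$ with the prescribed Dirichlet data. The a priori estimates of Theorem \ref{T-3-1} depend only on $(M,g)$ and on finitely many derivatives of the boundary data, and $\phi_i(s)$ varies smoothly on the compact interval $[0,1]$. This yields uniform bounds
\[
|\Phi|+|\nabla\Phi|+|\p_t\Phi|+|\triangle\Phi|+|\nabla\p_t\Phi|+\p_{tt}\Phi\leq C=C(M,\phi_0,\phi_1),
\]
independent of $(s,\epsilon)$. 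The strict positivity $\p_{tt}\Phi>0$ follows from $\Phi_{tt}(1+\triangle\Phi)=\epsilon+|\nabla\Phi_t|^2>0$ together with $\Phi\in\cH$.

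\emph{Bounds on $\p_s\Phi$ and $\p_{ss}\Phi$.} I differentiate the equation in $s$. The linearisation is the elliptic operator
\[
Lu:=\Phi_{tt}\triangle u+(1+\triangle\Phi)u_{tt}-2\sum_k\Phi_{tk}u_{tk},
\]
which is the $s=1$ case of $dP$ from \eqref{E-3-5}, and is uniformly elliptic on bounded solutions whenever $\epsilon>0$. Since $Q(D^2\Phi)=\epsilon$ is independent of $s$, one obtains $L\,\Phi_s=0$ with Dirichlet data $\p_s\phi_0(s),\p_s\phi_1(s)$; the weak maximum principle (no zero-order term) gives $|\Phi_s|\le\max|\p_s\phi_i|\le C$, independent of $\epsilon$. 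Differentiating a second time gives
\[
L\,\Phi_{ss}=-d^2Q(D^2\Phi_s,D^2\Phi_s).
\]
Now Lemma \ref{L-3-3} (and its stated extension to the quadratic $Q$) shows $\log Q$ is concave on the open cone where $Q>0$, $\Phi_{tt}>0$, and $1+\triangle\Phi>0$, all of which hold along our family. Since $dQ(D^2\Phi_s)=0$ by the first differentiation, the concavity identity $d^2(\log Q)(X,X)=d^2Q(X,X)/Q-(dQ(X))^2/Q^2\le 0$ applied at $X=D^2\Phi_s$ yields $d^2Q(D^2\Phi_s,D^2\Phi_s)\le 0$. Hence $L\,\Phi_{ss}\ge 0$, and the maximum principle gives $\p_{ss}\Phi\le \max|\p_{ss}\phi_i|\le C$. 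This proves item 1.

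\emph{Energy derivative (item 3).} Differentiating $E(t,s,\epsilon)=\int_M\Phi_t^2(1+\triangle\Phi)\,dg$ in $t$,
\[
\p_t E=\int_M 2\Phi_t\Phi_{tt}(1+\triangle\Phi)\,dg+\int_M\Phi_t^2\,\triangle\Phi_t\,dg.
\]
Substituting the geodesic relation $\Phi_{tt}(1+\triangle\Phi)=\epsilon+|\nabla\Phi_t|^2$ into the first integral and integrating the second by parts via $\int_M\Phi_t^2\triangle\Phi_t\,dg=-2\int_M\Phi_t|\nabla\Phi_t|^2\,dg$, the $|\nabla\Phi_t|^2$ terms cancel and
\[
\p_t E=2\epsilon\int_M\Phi_t\,dg,
\]
which is bounded by $C\epsilon$ using the uniform bound on $|\Phi_t|$. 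For item 2, the $\epsilon$-independent weak-$C^2$ bounds give subsequential compactness as $\epsilon\to 0$; the $C^0$-limit is a weak $C^0$ solution of the geodesic equation in the sense of Definition 5.1 with boundary data $\phi_0(s),\phi_1(s)$, hence by Theorem \ref{T-5-2} it is unique and coincides with the weak $C^2$ geodesic, so the full family converges in weak $C^2$.

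The main obstacle is the second-order $s$-estimate: since $Q$ is a signature-mixed quadratic form, a naive maximum-principle argument on the equation satisfied by $\Phi_{ss}$ has no obvious sign. The decisive input is the concavity of $\log Q$ on the cone where our solutions live, which converts the identity $L\,\Phi_{ss}=-d^2Q(D^2\Phi_s,D^2\Phi_s)$ into the subsolution inequality $L\,\Phi_{ss}\geq 0$ precisely because differentiating the equation once forces $dQ(D^2\Phi_s)=0$.
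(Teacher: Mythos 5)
Your proof is correct and follows the same route as the paper, which is extremely terse here (the paper's proof is essentially one sentence: the claimed $s$-derivative bounds follow from the maximum principle applied to $dQ(\Phi_s)=0$ and $dQ(\Phi_{ss})\ge 0$, the latter ``a consequence of concavity of the operator $Q$''). You fill this in correctly, and in fact slightly more carefully than the paper states it: $Q$ itself is an indefinite quadratic form and is not concave; the precise input is concavity of $\log Q$ (Lemma \ref{L-3-3}), which together with $dQ(D^2\Phi_s)=0$ forces $d^2Q(D^2\Phi_s,D^2\Phi_s)\le 0$, hence $L\Phi_{ss}=-d^2Q(D^2\Phi_s,D^2\Phi_s)\ge 0$. (An equivalent route, closer to the paper's Lemma \ref{L-3-2}, is to note $Q(D^2\Phi(s+h))=Q(D^2\Phi(s))=\epsilon$ implies $Q(D^2\Phi(s+h)-D^2\Phi(s))\le 0$, then divide by $h^2$ and let $h\to 0$ to get $Q(D^2\Phi_s)\le 0$; since $Q$ is quadratic this is exactly $d^2Q(D^2\Phi_s,D^2\Phi_s)\le 0$.) Your computation of $\partial_t E=2\epsilon\int_M\Phi_t\,dg$ and the convergence argument for item 2 are also correct and are what the paper leaves implicit.
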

\begin{proof}The lemma is clear except
\[
\left|\frac{\p \Phi}{\p s}\right|\leq C,~~\frac{\p^2\Phi}{\p
s^2}\leq C.
\]
The inequalities above follow from the maximum principle directly
since
\[
dQ\left(\frac{\p \Phi}{\p s}\right)=0,~~dQ\left(\frac{\p^2\Phi}{\p
s^2}\right)\geq 0,
\]
where the last inequality is a consequence of concavity of the
operator $Q$.
\end{proof}

Next we show that $d$ is a continuous function in $\cH$. First we have

\begin{lemma}\label{L-5-7}Suppose $\phi_0\in \cH$, then for any $\phi\in \cH$, $d(\phi_0, \phi)\rightarrow 0$ when $\phi\rightarrow \phi_0$ in $C^k$ topology for $k\geq 4$. \end{lemma}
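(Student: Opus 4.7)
The plan is to dominate $d(\phi_0,\phi)$ by the length of the smooth approximate geodesic $\Phi^\epsilon$ solving $Q(D^2\Phi^\epsilon)=\epsilon$ with boundary values $\phi_0,\phi$, and to show this length can be made arbitrarily small once $\phi$ is close enough to $\phi_0$. By Theorem \ref{T-3-1} and the weak $C^2$ convergence $\Phi^\epsilon\to \Phi$ as $\epsilon\to 0$, the corresponding lengths $d_\epsilon(\phi_0,\phi)=\int_0^1\sqrt{E_\epsilon(t)}\,dt$ converge to $d(\phi_0,\phi)$, so it suffices to produce a bound on $d_\epsilon$ that is uniform in small $\epsilon$ and vanishes as $\phi\to\phi_0$.

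The first step is the near-constancy of the energy along $\Phi^\epsilon$. Differentiating $E_\epsilon$ in $t$, using $Q(D^2\Phi^\epsilon)=\epsilon$, and integrating by parts on $X$ yields
\[
\frac{d}{dt}E_\epsilon(t)\;=\;2\epsilon\int_X \dot\Phi^\epsilon\,dg,
\]
which is bounded by $C\epsilon$ via the uniform $C^1$ bound of Lemma \ref{L-4-3}. Hence $|E_\epsilon(t)-E_\epsilon(0)|\leq C\epsilon$ and it suffices to estimate the single quantity $E_\epsilon(0)=\int_X\dot\Phi^\epsilon(\cdot,0)^2(1+\triangle\phi_0)\,dg$, for which an $L^\infty$ bound on $\dot\Phi^\epsilon(\cdot,0)$ is enough.

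For that $L^\infty$ bound I would trap $\Phi^\epsilon$ between two affine-in-$t$ barriers. Since $Q=\epsilon>0$ and $\Phi^\epsilon\in\cH$ force $\Phi^\epsilon_{tt}>0$, convexity in $t$ gives the upper barrier $\Phi^\epsilon(x,t)\leq (1-t)\phi_0(x)+t\phi(x)$, whence $\dot\Phi^\epsilon(x,0)\leq \phi(x)-\phi_0(x)$. For the lower barrier I would use $\Psi_{-a}(x,t)=(1-t)\phi_0+t\phi-at(1-t)$ and rerun the proof of Proposition \ref{P-4-2}: at any interior minimum of $\Phi^\epsilon-\Psi_{-a}$, $D^2\Phi^\epsilon\geq D^2\Psi_{-a}$ combined with the padded-matrix trick based on Lemma \ref{L-3-2} yields a contradiction provided
\[
Q(D^2\Psi_{-a})\;=\;2a\bigl(1+\triangle\Psi_{-a}\bigr)-|\nabla(\phi-\phi_0)|^2\;>\;\epsilon.
\]
Because $\inf_{X\times[0,1]}(1+\triangle\Psi_{-a})$ stays close to $\inf_X(1+\triangle\phi_0)>0$ for $\phi$ near $\phi_0$ in $C^2$, this inequality is achieved by choosing $a=C_{\phi_0}\bigl(\epsilon+\|\phi-\phi_0\|_{C^1}^2\bigr)$. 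Then $\dot\Phi^\epsilon(x,0)\geq \phi(x)-\phi_0(x)-a$, and together the two bounds give $\|\dot\Phi^\epsilon(\cdot,0)\|_{L^\infty}\leq \|\phi-\phi_0\|_{C^0}+a$.

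Putting everything together, $E_\epsilon(t)\leq C(\|\phi-\phi_0\|_{C^0}+a)^2+C\epsilon$ uniformly in $t$, giving
\[
d_\epsilon(\phi_0,\phi)\;\leq\;C\bigl(\|\phi-\phi_0\|_{C^0}+\|\phi-\phi_0\|_{C^1}^2+\sqrt\epsilon\bigr),
\]
and sending $\epsilon\to 0$ produces $d(\phi_0,\phi)\leq C(\|\phi-\phi_0\|_{C^0}+\|\phi-\phi_0\|_{C^1}^2)$, which tends to $0$ as $\phi\to\phi_0$. The delicate point, where the argument could go wrong, is the quantitative lower barrier: one must verify that the padded-matrix contradiction in Proposition \ref{P-4-2} survives when $a$ is small rather than ``sufficiently big'', and this works precisely because the obstruction $|\nabla(\phi-\phi_0)|^2$ shrinks at the same rate. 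The hypothesis $k\geq 4$ is not needed for the barrier construction itself; it enters only to guarantee the uniform regularity of the approximations $\Phi^\epsilon$ required for the $\epsilon\to 0$ passage to the limit.
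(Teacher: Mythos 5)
Your argument is correct, but it takes a genuinely different route from the paper's. The paper's proof first applies Theorem \ref{T-5-2}, comparing the weak geodesic $\Phi$ against the trivial geodesic $\Phi_0(t)\equiv\phi_0$ to get $\sup|\Phi-\phi_0|\leq\|\phi-\phi_0\|_{C^0}$, and then bounds $|\Phi_t|$ by the one-dimensional interpolation inequality
\[
\left|\tfrac{\p}{\p t}(\Phi-\phi_0)\right|\leq C(\epsilon_1)\max_t|\Phi-\phi_0|+\epsilon_1\max_t|\Phi_{tt}|,
\]
feeding in the uniform weak $C^2$ bound from Theorem \ref{T-1-4} (this is where $k\geq 4$ is invoked) and passing to the two-parameter limit $\|\phi-\phi_0\|_{C^0}\to 0$, then $\epsilon_1\to 0$. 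You instead bound $\dot\Phi^\epsilon(\cdot,0)$ directly by trapping $\Phi^\epsilon$ between the chord and a quantified $\Psi_{-a}$, combining the resulting $L^\infty$ bound at $t=0$ with near-constancy of the energy to propagate it to all $t$. The two key checks you flag are both sound: the identity $\frac{d}{dt}E_\epsilon=2\epsilon\int_X\dot\Phi^\epsilon\,dg$ follows from $Q(D^2\Phi^\epsilon)=\epsilon$ after one integration by parts; and the padded-matrix contradiction in Proposition \ref{P-4-2} requires only $Q(D^2\Psi_{-a})>\epsilon$ together with $\Psi_{-a,tt}=2a>0$, neither of which demands $a$ to be large — only that $a\gtrsim(\epsilon+\|\phi-\phi_0\|_{C^1}^2)/\inf(1+\triangle\Psi_{-a})$, where $\inf(1+\triangle\Psi_{-a})$ stays bounded below by roughly $\inf(1+\triangle\phi_0)$ for $\phi$ close to $\phi_0$ in $C^2$ (since $1+\triangle\Psi_{-a}$ is the $t$-convex combination of $1+\triangle\phi_0$ and $1+\triangle\phi$). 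Your approach buys an explicit modulus of continuity, $d(\phi_0,\phi)\leq C(\|\phi-\phi_0\|_{C^0}+\|\phi-\phi_0\|_{C^1}^2)$, with the boundary-data dependence more transparent; the paper's comparison/interpolation argument is shorter but qualitative, relying on a non-quantitative two-parameter limiting process. Both proofs rest on the same uniform a priori estimates for $\Phi^\epsilon$ and on the identification of $d$ as the limit of $d_\epsilon$.
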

\begin{proof}This is really a consequence of Theorem \ref{T-5-2}.  $\Phi_0(t)\equiv\phi_0$ is a geodesic with length zero. Let $\Phi(t)$ be the weak $C^2$ geodesic which connects $\phi_0$ and $\phi$, then by Theorem \ref{T-5-2}, $|\Phi(t)-\Phi_0(t)|\leq |\phi_0-\phi|.$ For any $x$ fixed, apply the interpolation inequality for $\Phi(t, x)-\phi_0(x)$ in $[0, 1]$ such that, for any $\epsilon_1>0$, 
\[
\left|\frac{\p}{\p t}\left(\Phi(t, x)-\phi_0(x)\right)\right|\leq C(\epsilon_1)\max_t|\Phi(t, x)-\phi_0(x)|+\epsilon_1\max_t\left|\frac{\p^2}{\p t^2}\left(\Phi(t)-\phi_0\right)\right|.
\]
In the case of $t\in [0, 1]$ the proof of the above interpolation inequality is straightforward.
We can assume that $|\phi-\phi_0|_{C^4}\leq 1$, hence  $|\Phi_{tt}|\leq C_1$ for $C_1$ depending only on $M, g, \phi_0$.
It then follows that
\[
|\Phi_t|\leq C(\epsilon_1)|\phi(x)-\phi_0(x)|+C_1\epsilon_1,
\]
It then implies that when $|\phi-\phi_0|\rightarrow 0$, $|\Phi_t|\rightarrow 0$. Hence $d(\phi_0, \phi)\rightarrow 0$. 

\end{proof}
 
 We shall then prove the triangle inequality.
\begin{theo}\label{T-7-8}Suppose $C$ is a smooth simple curve defined by $ \phi(s): s\in[0, 1]\rightarrow \cH $.  Let $\psi\in \cH$ be a point which is not on $C$. For any $s$,
\[
d(\psi, \phi(s))\leq d(\psi, \phi(0))+d_C(\phi(0), \phi(s)),
\]
where $d_C$ denotes the length along the curve $C$. In particular,
we have the following triangle inequality
\[
d(\psi, \phi(1))\leq d(\psi, \phi(0))+d(\phi(0), \phi(1)).
\]
\end{theo}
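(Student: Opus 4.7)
The plan is to apply the first-variation formula of energy along the two-parameter family of $\epsilon$-approximate geodesics from $\psi$ to $\phi(s)$, followed by passing to the limit $\epsilon \to 0$. For each $\epsilon>0$, Lemma~\ref{L-7-7} (applied with the first endpoint fixed at $\psi$ and the second endpoint equal to $\phi(s)$) produces a smooth family $\Phi^\epsilon(x,t,s)$ solving $Q(D^2\Phi^\epsilon)=\epsilon$, with uniform bounds on $\Phi^\epsilon$ and $\Phi^\epsilon_s$, and whose energy density $E^\epsilon(t,s)=\int_M (\Phi^\epsilon_t)^2(1+\triangle\Phi^\epsilon)\,dg$ satisfies $|\partial_t E^\epsilon|\leq C\epsilon$. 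Write $L_\epsilon(s)=\int_0^1\sqrt{E^\epsilon(t,s)}\,dt$ for the length of this $\epsilon$-path; since $\Phi^\epsilon$ converges in weak $C^2$ to the unique weak geodesic from $\psi$ to $\phi(s)$ (by Theorem~\ref{T-5-2}), one has $L_\epsilon(s)\to d(\psi,\phi(s))$.

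Differentiating $\int_0^1 E^\epsilon(t,s)\,dt$ in $s$, integrating by parts once in $x$ for the term $(\Phi^\epsilon_t)^2\triangle\Phi^\epsilon_s$ and once in $t$ (using $\Phi^\epsilon_s(\cdot,0,s)=0$, $\Phi^\epsilon_s(\cdot,1,s)=\phi'(s)$), and recognizing the resulting interior $t$-integrand as $-2\Phi^\epsilon_s\,Q(D^2\Phi^\epsilon)=-2\epsilon\,\Phi^\epsilon_s$, one obtains
\[
\frac{d}{ds}\int_0^1 E^\epsilon(t,s)\,dt = 2\int_M \Phi^\epsilon_t(x,1,s)\,\phi'(s,x)(1+\triangle\phi(s))\,dg - 2\epsilon\int_0^1\!\!\int_M \Phi^\epsilon_s\,dg\,dt.
\]
Cauchy--Schwarz on $L^2(M,(1+\triangle\phi(s))dg)$ bounds the first term by $2\sqrt{E^\epsilon(1,s)}\,\|\phi'(s)\|_{\phi(s)}$, while the uniform bound $|\Phi^\epsilon_s|\leq C$ from Lemma~\ref{L-7-7} makes the second term $O(\epsilon)$. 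Combining this with the consequences of $|\partial_t E^\epsilon|\leq C\epsilon$, namely $\int_0^1 E^\epsilon(t,s)\,dt=L_\epsilon(s)^2+O(\epsilon)$ and $\sqrt{E^\epsilon(1,s)}=L_\epsilon(s)+O(\epsilon)$, yields the differential inequality
\[
\frac{d}{ds}\bigl(L_\epsilon(s)^2\bigr) \;\leq\; 2L_\epsilon(s)\,\|\phi'(s)\|_{\phi(s)} + C\epsilon.
\]

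Since $\psi$ does not lie on the compact smooth curve $C$, the distance continuity of Lemma~\ref{L-5-7} gives $\inf_s d(\psi,\phi(s))>0$; together with $L_\epsilon(s)\to d(\psi,\phi(s))$ this yields a uniform lower bound $L_\epsilon(s)\geq c>0$ for all small $\epsilon$. Dividing the above inequality by $2L_\epsilon(s)$ and integrating from $0$ to $s$ gives
\[
L_\epsilon(s) \leq L_\epsilon(0) + \int_0^s \|\phi'(\sigma)\|_{\phi(\sigma)}\,d\sigma + C'\epsilon,
\]
and sending $\epsilon\to 0$ produces $d(\psi,\phi(s))\leq d(\psi,\phi(0))+d_C(\phi(0),\phi(s))$. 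The final triangle inequality then follows by taking $C$ to be a smooth curve approximating the weak $C^2$ geodesic from $\phi(0)$ to $\phi(1)$ and invoking the continuity of $d$ in the endpoints from Lemma~\ref{L-5-7}. The main obstacle is the convergence $L_\epsilon(s)\to d(\psi,\phi(s))$ uniformly in $s$ and the absorption of the $\epsilon/L_\epsilon$ remainder in the integration step; both rest on the weak $C^2$ bounds and smooth $s$-dependence furnished by Lemma~\ref{L-7-7} together with the uniform positivity of the distance along $C$.
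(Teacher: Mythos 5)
Your proposal is correct and takes essentially the same route as the paper: apply the first variation formula along the two-parameter family of $\epsilon$-geodesics from Lemma~\ref{L-7-7}, integrate by parts in $t$ to isolate the endpoint term and an $O(\epsilon)$ remainder via the equation $Q(D^2\Phi^\epsilon)=\epsilon$, control the endpoint term by Cauchy--Schwarz against $\|\phi'(s)\|_{\phi(s)}$, and pass $\epsilon\to0$. The one procedural variation is that you differentiate the total energy $\int_0^1 E^\epsilon\,dt$ rather than the length $L_\epsilon=\int_0^1\sqrt{E^\epsilon}\,dt$ as the paper does; this is clean for the variation formula but then requires dividing by $L_\epsilon$, and the paper's version has $1/\sqrt{E}$ in the integrand from the outset. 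Either way the uniform lower bound $L_\epsilon(s)\geq c>0$ from $\psi\notin C$ and Lemma~\ref{L-5-7} is exactly what absorbs the $\epsilon/L_\epsilon$ (resp.\ $\epsilon/\sqrt{E}$) remainders, and you identify this explicitly. One small technical point: rather than substituting the identity $\int_0^1E^\epsilon\,dt=L_\epsilon^2+O(\epsilon)$ into a differential inequality in $s$ (the $s$-derivative of the $O(\epsilon)$ error is not controlled a priori), it is cleaner to set $\tilde L_\epsilon(s)=\bigl(\int_0^1 E^\epsilon\,dt\bigr)^{1/2}$, derive the differential inequality for $\tilde L_\epsilon$, integrate, and then use $|\tilde L_\epsilon-L_\epsilon|=O(\epsilon)$ only at the last step before sending $\epsilon\to0$; the conclusion is unchanged.
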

\begin{proof}For any $\epsilon>0$, we can get a two parameter
families of smooth curve $C(t, s, \epsilon): \Phi(t, s,
\epsilon)\in \cH$ solving
\[
Q(D^2\Phi)=\epsilon
\] with the boundary conditions corresponding $(\psi, \phi(s)).$
Denote
\[
L(s, \epsilon)=d_{\epsilon}(\psi,
\phi(s))=\int_0^1dt\sqrt{\int_M\left|\frac{\p \Phi}{\p
t}\right|^2(1+\triangle \Phi)dg}
\]
and
\[
l(s)=d_C(\phi(0), \phi(s))=\int_0^sd\tau\sqrt{\int_M\left|\frac{\p
\Phi}{\p \tau}\right|^2(1+\triangle \Phi)dg}.
\]
Let $F(s, \epsilon)=L(s, \epsilon)+l(s)$. Taking derivatives,
\begin{eqnarray*}
\frac{d L(s, \epsilon)}{ds}&=&\int_0^1\frac{dt}{\sqrt{E(t, s,
\epsilon)}}\int_M\left(\frac{\p \Phi}{\p t}\frac{\p^2\Phi}{\p t\p
s}+\frac{1}{2}\Phi_t^2\triangle\Phi\Phi_s\right)dg\\
&=&\int_0^1\frac{dt}{\sqrt{E(t, s, \epsilon)}}\left\{\frac{\p}{\p
t}\int_M\Phi_t\Phi_s(1+\triangle\Phi)dg-\epsilon\int_M\Phi_sdg\right\}\\
&=&\left[\frac{1}{\sqrt{E}}\int_M\Phi_t\Phi_s(1+\triangle\Phi)dg\right]_0^1-\epsilon\int_0^1\frac{dt}{\sqrt{E(t,
s, \epsilon)}}\int_M\Phi_sdg\\
&&+\int_0^1dt\left(E(t, s,
\epsilon)^{-\frac{3}{2}}\int_M\Phi_t\Phi_s(1+\triangle
\Phi)dg\int_M\Phi_t\epsilon dg\right)\\
&\geq&\left[\frac{1}{\sqrt{E}}\int_M\Phi_t\Phi_s(1+\triangle\Phi)dg\right]_0^1-C\epsilon\\
&=&\frac{1}{\sqrt{E(1, s, \epsilon)}}\int_M\Phi_t(1, s,
\epsilon)\Phi_s(1, s, \epsilon)(1+\triangle \phi(s))dg-C\epsilon,
\end{eqnarray*}
where we use the fact $\Phi_s(0, s, \epsilon)=0$. Also we have
\[
\frac{d l(s)}{ds}=\sqrt{\int_M\Phi_s^2(1+\triangle \phi(s))dg}.
\]
By the Schwartz inequality, we have
\[
\sqrt{E(1, s, \epsilon)}\sqrt{\int_M\Phi_s^2(1+\triangle
\phi(s))dg}\geq\left|\int_M\Phi_t(1, s, \epsilon)\Phi_s(1, s,
\epsilon)(1+\triangle \phi(s))dg\right|,
\]
 it follows that
\[
\frac{dF(s, \epsilon)}{ds}\geq-C\epsilon.
\]
So $F(s, \epsilon)-F(0, \epsilon)\geq -C\epsilon$. Let
$\epsilon\rightarrow 0$, we get that
\[
d(\psi, \phi(1))\leq d(\psi, \phi(0))+d_C(\phi(0), \phi(1)).
\]
To get the triangle inequality, let $C$ be the curve $\Phi(s)$
solving
\[
Q(D^2\Phi)=\epsilon.
\]
And we can get that
\[d(\psi, \phi(1))\leq d(\psi, \phi(0))+d_\epsilon(\phi(0),
\phi(1)).
\]
Let $\epsilon\rightarrow 0$ again, we get
\[
d(\psi, \phi(1))\leq d(\psi, \phi(0))+d(\phi(0), \phi(1)).
\]
\end{proof}

\begin{rem}
Theorem \ref{T-7-8} is used  to show that the length of any simple curve is always longer than the 
length of the geodesic between two end points (see Corollary \ref{c-5-9}).  This will follow directly from Theorem \ref{T-7-8}
with the assumption that $\psi$ is  not on the curve $C$ and  Lemma \ref{L-5-7}. This 
lapse of the argument was  kindly pointed out  by the anonymous referee. Indeed, the previous version of the paper did not assume that $\psi$ is not on the curve. 
We thank the referee for pointing out this. 
However this does not affect all the main results in this section in view of the result in Lemma \ref{L-5-7}. 
\end{rem}

\begin{cor}\label{c-5-9}The geodesic distance between any two points in $\cH$
realizes the minimum of the lengths over all possible paths.
\end{cor}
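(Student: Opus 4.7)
The plan is to combine Theorem \ref{T-7-8} (the triangle-type inequality for a base point off a simple curve) with Lemma \ref{L-5-7} (continuity of $d$ at a fixed point) to show that every smooth path from $\phi_0$ to $\phi_1$ has length at least $d(\phi_0,\phi_1)$. Since the weak $C^2$ geodesic is itself such a path, and its length equals $d(\phi_0,\phi_1)$ by Definition 7.4, this identifies the geodesic as a length minimizer.

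First I would fix an arbitrary smooth curve $\sigma:[0,1]\to\cH$ from $\phi_0$ to $\phi_1$, with length $L(\sigma)$. By an arbitrarily small $C^\infty$ perturbation (changing $L(\sigma)$ by as little as desired), I may assume $\sigma$ is a simple curve: $\cH$ is an open subset of an infinite dimensional function space, so a generic small deformation kills all self intersections of a one dimensional image.

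Next, exploiting the same infinite dimensionality of $\cH$ against the one dimensional set $\sigma([0,1])$, for any small $\eta>0$ I would pick a point $\psi\in\cH$ with $|\psi-\phi_0|_{C^4}<\eta$ but $\psi\notin\sigma([0,1])$. Applying Theorem \ref{T-7-8} with this off-curve base point $\psi$ and the simple curve $\sigma$ yields
\[
d(\psi,\phi_1)\le d(\psi,\phi_0)+L(\sigma),
\]
and the triangle inequality (already established as part of Theorem \ref{T-7-8}, by first choosing a second auxiliary point off a geodesic from $\psi$ to $\phi_1$ if needed) then gives
\[
d(\phi_0,\phi_1)\le d(\phi_0,\psi)+d(\psi,\phi_1)\le 2\,d(\phi_0,\psi)+L(\sigma).
\]
Lemma \ref{L-5-7} ensures $d(\phi_0,\psi)\to 0$ as $\eta\to 0$, so letting $\eta\to 0$ gives $d(\phi_0,\phi_1)\le L(\sigma)$. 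Since $\sigma$ was arbitrary, this is precisely the desired minimizing property.

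The only delicate point, flagged in the remark following Theorem \ref{T-7-8}, is the need to keep the auxiliary base point $\psi$ off the simple curve $\sigma$; otherwise the hypotheses of Theorem \ref{T-7-8} are violated. I do not expect this to be a serious obstacle, because $\cH$ is infinite dimensional and $\sigma([0,1])$ is one dimensional, so such a $\psi$ exists arbitrarily close to $\phi_0$ in any reasonable norm. The quantitative closeness statement needed to conclude is supplied by Lemma \ref{L-5-7}, which is the genuinely new analytic input beyond what was proved in Theorem \ref{T-7-8}.
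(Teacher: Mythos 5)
Your proposal is correct and follows essentially the same route as the paper: approximate $\phi_0$ by an off-curve point $\psi$ in $C^4$, apply Theorem~\ref{T-7-8} to $\psi$ and the (simple) curve, then use Lemma~\ref{L-5-7} to let $\psi\to\phi_0$. The only difference is cosmetic — you make explicit the intermediate triangle-inequality step $d(\phi_0,\phi_1)\le d(\phi_0,\psi)+d(\psi,\phi_1)$ and the reduction to simple curves, both of which the paper leaves implicit.
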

\begin{proof} We just need to prove that the result holds for any smooth simple curve $C: \phi(s): [0, 1]\rightarrow
\cH$. Take $\psi\rightarrow \phi(0)$ in $C^4$  such that $\psi$ is not on the curve $C$.  By Theorem \ref{T-7-8}, we get that
\[
d(\psi, \phi(1))\leq d(\psi, \phi(0))+d_C(\phi(0), \phi(1)).
\]
Now let $\psi\rightarrow \phi(0)$ and by Lemma \ref{L-5-7}, we get
\[
d(\phi(0), \phi(1))\leq d_C(\phi(0), \phi(1)). 
\]
\end{proof}
\begin{theo}The space $(\cH, d)$ is a  metric space. Moreover, the
distance function is at least $C^1$.
\end{theo}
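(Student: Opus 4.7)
I would verify the four metric axioms in turn and then attack the $C^1$ claim via the first variation formula used in Theorem \ref{T-7-8}. \emph{Symmetry} is immediate from reparametrizing a weak $C^0$ solution by $t\mapsto 1-t$: this sends a solution of \eqref{E-1-1} to another with the boundary data swapped and leaves the length integral $\int_0^1\sqrt{\int_M\dot\Phi^2(1+\triangle\Phi)\,dg}\,dt$ invariant. \emph{Non-negativity} is built into the definition. \emph{Non-degeneracy} follows from Lemma \ref{L-7-6}: after normalizing so $\phi_0=0$ and $\phi_1=\phi$ with $\int_M\phi\,dg=0$, the lemma gives
\[
d(0,\phi)\geq \max\Bigl(\sqrt{\textstyle\int_{\phi>0}\phi^2(1+\triangle\phi)\,dg},\ \sqrt{\textstyle\int_{\phi<0}\phi^2\,dg}\Bigr),
\]
which vanishes only for $\phi\equiv 0$. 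For the \emph{triangle inequality}, given $\phi_0,\phi_1,\phi_2$ I would take the $\epsilon$-smooth approximate geodesics of Lemma \ref{L-7-7} connecting $\phi_0$ to $\phi_1$ and $\phi_1$ to $\phi_2$, concatenate them, and smooth the corner at $\phi_1$ to produce a smooth simple curve from $\phi_0$ to $\phi_2$ of length $d_\epsilon(\phi_0,\phi_1)+d_\epsilon(\phi_1,\phi_2)+o(1)$; Corollary \ref{c-5-9} bounds $d(\phi_0,\phi_2)$ above by this length, and passing $\epsilon\to 0$ gives the inequality.

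For $C^1$ regularity, fix $\psi\in\cH$ and a smooth curve $s\mapsto\phi(s)\in\cH$ with $\phi(s)\ne\psi$. Using Lemma \ref{L-7-7} produce the two-parameter family $\Phi(t,s,\epsilon)$ solving $Q(D^2\Phi)=\epsilon$ from $\psi$ to $\phi(s)$, and write $L(s,\epsilon)=d_\epsilon(\psi,\phi(s))$. The computation carried out in the proof of Theorem \ref{T-7-8} yields
\[
\frac{dL(s,\epsilon)}{ds}=\frac{1}{\sqrt{E(1,s,\epsilon)}}\int_M\Phi_t(1,s,\epsilon)\,\phi'(s)\,(1+\triangle\phi(s))\,dg+O(\epsilon),
\]
with $E(1,s,\epsilon)$ uniformly bounded above by Lemma \ref{L-7-7} and bounded away from zero by Lemma \ref{L-7-6}. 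Sending $\epsilon\to 0$ the weak $C^2$ convergence from Theorem \ref{T-1-4}, combined with the uniqueness of the weak $C^0$ solution (Theorem \ref{T-5-2}), identifies the limit as an intrinsic object attached to the weak geodesic, giving a well-defined derivative $\tfrac{d}{ds}d(\psi,\phi(s))$ expressible in terms of the boundary trace of $\Phi_t$ of the limiting geodesic. Continuity of this expression in $s$, and in the base point $\psi$ (by the same stability argument applied to the endpoint data), yields $C^1$ regularity of $d$ off the diagonal; continuity of $d$ up to the diagonal is Lemma \ref{L-5-7}.

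\emph{Main obstacle.} The delicate step is justifying the limit $\epsilon\to 0$ in the first variation: Theorem \ref{T-1-4} only provides weak $C^2$ bounds, so the boundary trace $\Phi_t(1,s,\epsilon)$ is controlled only in a weak-$*$ sense and pointwise convergence up to $t=1$ is not directly available. To pin down a unique limit one must combine Theorem \ref{T-5-2} with the energy-conservation estimate $|\partial_t E|\leq C\epsilon$ from Lemma \ref{L-7-7}, so that $E(1,s,\epsilon)$ stabilizes and the pairing against the fixed smooth test function $\phi'(s)(1+\triangle\phi(s))$ converges. The additional continuity in the endpoints needed for $C^1$ (as opposed to merely Gateaux) regularity then follows by running the same estimates with perturbed boundary data, again invoking Theorem \ref{T-5-2} for $C^0$ control and the a priori estimates of Theorem \ref{T-1-4} for weak $C^2$ compactness.
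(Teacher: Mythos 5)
Your proposal lands on essentially the same argument as the paper for the substantive claim, the $C^1$ regularity: both you and the authors extract the derivative of $d$ from the first-variation computation in Theorem \ref{T-7-8} and pass $\epsilon\to 0$, and you are right that the delicate point is controlling the boundary trace $\Phi_t(1,s,\epsilon)$ in the limit — a point the paper glosses over with ``It easily follows'' — so your discussion of the ``main obstacle'' (using the energy-conservation bound $|\partial_t E|\le C\epsilon$ together with the uniqueness of Theorem \ref{T-5-2}) is a genuine improvement in explicitness. Your treatment of non-degeneracy via Lemma \ref{L-7-6} and symmetry via $t\mapsto 1-t$ matches what the paper leaves implicit.

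The one place where you diverge, and needlessly so, is the triangle inequality. Theorem \ref{T-7-8} already states it verbatim (``In particular, we have the following triangle inequality $d(\psi,\phi(1))\le d(\psi,\phi(0))+d(\phi(0),\phi(1))$''), so there is nothing left to prove. Your detour — concatenating two $\epsilon$-approximate geodesics, smoothing the corner, and then invoking Corollary \ref{c-5-9} — is not wrong, but it is circular in a mild sense (Corollary \ref{c-5-9} is itself deduced from Theorem \ref{T-7-8}) and it introduces technical debt you don't pay off: you must check that the smoothed concatenation is a \emph{simple} curve in $\cH$ (Corollary \ref{c-5-9} is stated only for simple curves) and that the corner smoothing really costs only $o(1)$ in length uniformly as $\epsilon\to 0$. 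Neither is hard, but both are steps the direct citation of Theorem \ref{T-7-8} avoids entirely. Replacing your concatenation argument with that direct citation brings your proof into line with the paper's, which begins simply by asserting ``The only thing we need to show is the differentiability of the distance function.''
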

\begin{proof}The only thing we need to show is the
differentiability of the distance function. Follow from the proof
of Theorem \ref{T-7-8}, for any $\epsilon>0$ we have
\[
\left|\frac{dL(s, \epsilon)}{ds}-\frac{1}{\sqrt{E(1, s,
\epsilon)}}\int_M\Phi_t\Phi_s(1+\triangle \Phi)dg\right|\leq
C\epsilon.
\]
It easily follows  from above that
\[
\lim_{s\rightarrow s_0}\frac{d(\psi, \phi(s))-d(\psi,
\phi(s_0))}{s-s_0}=\frac{1}{\sqrt{E(1, s_0,
\epsilon)}}\int_M\Phi_t(1, s_0)\Phi_s(1, s_0)(1+\triangle
\phi(s_0))dg.
\]
\end{proof}
\subsection{The curvature of $\cH$}
Donaldson \cite{Donaldson2007} has shown that the space $\cH$ has
non-positive sectional curvature formally with the natural metric.
However, we can only demonstrate that the geodesic equation has a
weak $C^2$ solution. To overcome this difficulty, we show that the
space $\cH$ has non-positive curvature in the Alexandrov's sense
by following Calabi-Chen \cite{CC2002}.
\begin{theo}\label{T-7-11}The space $(\cH, d)$ is a non-positive curved space on any
Riemannian manifold $(X, g)$ in the following sense. Let $A, B, C$
be three points in $\cH$. For any $\lambda\in [0, 1]$, let $P$ be
the point on the geodesic path connecting $B$ and $C$ such that
$d(B, P)=\lambda d(B, C)$ and $d(P, C)=(1-\lambda)d(B, C)$. Then
the following inequality holds:
\[
d^2(A, P)\leq (1-\lambda)d^2(A, B)+\lambda d^2(A,
C)-\lambda(1-\lambda)d^2(B, C).
\]
\end{theo}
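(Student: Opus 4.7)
Plan. The argument adapts Calabi--Chen \cite{CC2002}: establish the Alexandrov inequality along the smooth $\epsilon$-approximate geodesics produced by Theorem \ref{T-3-1}, exploit Donaldson's non-positive sectional curvature (Theorem A), and pass to the limit $\epsilon \to 0$.

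First, build a two-parameter family. Fix $\epsilon > 0$. By Theorem \ref{T-3-1} let $\Psi(\cdot, \lambda, \epsilon)$ be the smooth $\epsilon$-geodesic from $B$ to $C$, and set $P^\epsilon_\lambda := \Psi(\cdot, \lambda, \epsilon)$. For each $\lambda \in [0,1]$ let $\Phi(\cdot, t, \lambda, \epsilon)$ be the smooth $\epsilon$-geodesic on $X \times [0,1]_t$ from $A$ to $P^\epsilon_\lambda$. Applying Lemma \ref{L-7-7} twice yields uniform $C^2$ bounds in $(t, \lambda)$. Define
$$E(t, \lambda, \epsilon) := \int_X \Phi_t^2 (1 + \triangle \Phi)\, dg, \qquad L(\lambda, \epsilon) := \int_0^1 \sqrt{E(t, \lambda, \epsilon)}\, dt.$$
Lemma \ref{L-7-7} gives $|\partial_t E| \leq C\epsilon$, so $L^2(\lambda, \epsilon) = E(t, \lambda, \epsilon) + O(\epsilon)$ for every $t \in [0,1]$; in particular $L^2(0, \epsilon) = d^2_\epsilon(A, B)$ and $L^2(1, \epsilon) = d^2_\epsilon(A, C)$. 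The core estimate to establish is
$$\frac{d^2 L^2}{d\lambda^2}(\lambda, \epsilon) \geq 2\, d^2_\epsilon(B, C) - C\epsilon. \qquad (\ast)$$

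Next, derive $(\ast)$ by second variation. Setting $V := \Phi_\lambda$, $T := \Phi_t$ and using the Levi--Civita connection $D$ from Section 2, a direct computation with integration by parts gives
$$\tfrac{1}{2}\frac{d^2 E}{d\lambda^2} = \int_0^1 \|D_t V\|^2\, dt - \int_0^1 \langle R_{V,T} T, V\rangle\, dt - \int_0^1 \langle D_\lambda V, D_t T\rangle\, dt + \langle D_\lambda V, T\rangle\big|_{t=0}^{t=1}.$$
Theorem A makes the curvature term nonpositive, so $-\int \langle R_{V,T}T,V\rangle dt \geq 0$. From $Q(D^2 \Phi) = \epsilon$ one has $D_t T = \epsilon/(1 + \triangle \Phi)$, and similarly $D_\lambda V|_{t=1} = D_\lambda \Psi_\lambda = \epsilon/(1 + \triangle \Psi)$; the hypothesis $V|_{t=0} = 0$ forces $D_\lambda V|_{t=0} = 0$. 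Combined with the uniform estimates, the mixed and boundary contributions are thus $O(\epsilon)$, so $\frac{d^2 E}{d\lambda^2} \geq 2\int_0^1 \|D_t V\|^2 dt - C\epsilon$. In a parallel orthonormal frame along $\Phi(\cdot, \cdot, \lambda, \epsilon)$, scalar Cauchy--Schwarz applied coordinatewise gives $\int_0^1 \|D_t V\|^2 dt \geq \|V(1) - V(0)\|^2 = \|\Psi_\lambda\|^2_{P^\epsilon_\lambda}$, and Lemma \ref{L-7-7} (near-constancy of energy along $\Psi$) identifies $\|\Psi_\lambda\|^2 = d^2_\epsilon(B, C) + O(\epsilon)$. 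Combining with $L^2 = E + O(\epsilon)$ yields $(\ast)$.

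Finally, comparison and limit. The function $g(\lambda) := (1-\lambda) d^2_\epsilon(A, B) + \lambda d^2_\epsilon(A, C) - \lambda(1-\lambda) d^2_\epsilon(B, C)$ has $g'' = 2 d^2_\epsilon(B, C)$ and matches $L^2$ at $\lambda = 0, 1$, so $(\ast)$ makes $L^2 - g - C\epsilon \lambda(1-\lambda)$ convex with nonpositive boundary values, giving
$$L^2(\lambda, \epsilon) \leq (1-\lambda) d^2_\epsilon(A, B) + \lambda d^2_\epsilon(A, C) - \lambda(1-\lambda) d^2_\epsilon(B, C) + C\epsilon.$$
Theorem \ref{T-5-2} and the definition of the weak geodesic distance imply $d_\epsilon \to d$ for each pair of endpoints and $L(\lambda, \epsilon) \to d(A, P_\lambda)$ as $\epsilon \to 0$. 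Sending $\epsilon \to 0$ yields the stated Alexandrov inequality.

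Main obstacle. The chief technical difficulty lies in the second-variation step: one must rigorously carry out the identity in the infinite-dimensional Donaldson framework along the $\epsilon$-perturbed (hence non-geodesic) base curves $\Phi$, correctly identifying each $O(\epsilon)$ source coming from $D_t T = \epsilon/(1+\triangle\Phi)$ and from $D_\lambda \Psi_\lambda = \epsilon/(1+\triangle\Psi)$, and implementing the Cauchy--Schwarz bound under the varying fibrewise metric $1+\triangle\Phi$ via a parallel orthonormal frame. Passing control of these $\epsilon$-errors through two $\lambda$-derivatives of $L^2$ (rather than $E$) requires the $t$-almost-constancy of $E$ from Lemma \ref{L-7-7}.
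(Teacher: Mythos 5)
Your proposal is correct in substance and parallels the paper's Calabi--Chen style argument, but the algebraic organization differs in a way worth noting, and one step is phrased loosely. The paper works throughout with the integrated energy $E(s)=\int_0^1\int_M \Phi_t^2(1+\triangle\Phi)\,dg\,dt$ of the base family, never with $L^2$; it integrates the first variation by parts in $t$ \emph{before} taking $\partial_s$, so the second variation collapses to boundary terms $\langle D_YY,X\rangle_{t=1}+\langle D_XY,Y\rangle_{t=1}$ plus an explicit $\epsilon$-error, and the key estimate $\langle D_XY,Y\rangle_{t=1}\geq\|Y(1)\|^2$ is then supplied by a separate lemma (Lemma \ref{L-7-12}) showing $|Y|$ is convex in $t$ --- a statement whose proof already uses both the non-positive curvature and the positivity of the extra term $\langle D_YD_XX,Y\rangle=\epsilon\int_M|\nabla\Phi_s|^2/(1+\triangle\Phi)\,dg\geq 0$ coming from $D_XX=\epsilon/(1+\triangle\Phi)$. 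You instead keep the full second-variation formula with the integral $\int_0^1\|D_tV\|^2\,dt$, the curvature term, and the mixed $\langle D_\lambda V,D_tT\rangle$ term all visible, bound the mixed term by $O(\epsilon)$ using the one-sided bound $\Phi_{\lambda\lambda}\leq C$ from Lemma \ref{L-7-7}, kill the curvature term by Theorem A, and finish with the Cauchy--Schwarz estimate $\int_0^1\|D_tV\|^2\,dt\geq\|V(1)\|^2$ (which only needs metric compatibility of $D$, not a parallel frame). The two organizations are equivalent; the paper's is arguably cleaner because it isolates the Jacobi-field growth as a self-contained lemma and never meets the mixed term. One genuine, if small, slip: you announce the core estimate for $\frac{d^2L^2}{d\lambda^2}$ but derive it for $\frac{d^2E}{d\lambda^2}$, and pointwise $L^2=E+O(\epsilon)$ does not give control of second derivatives; the fix is simply to run the convexity-and-comparison step for $E(\lambda)$ directly (exactly as the paper does) and only convert energies to squared distances after sending $\epsilon\to 0$, where Lemma \ref{L-7-7}(3) (near-constancy of the energy element) gives $E(\lambda,\epsilon)\to d^2(A,P_\lambda)$ and likewise for the endpoints.
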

To prove this theorem, we need the following lemma, which in
essence says that the Jacobi vector field along any geodesic grows
super-linearly.
\begin{lemma}\label{L-7-12}Let $\Phi(t, s, \epsilon)$ be the two parameter
families of approximation of geodesics as in Lemma \ref{L-7-7}.
Let $Y=\Phi_s$ be the deformation vector fields and $X=\Phi_t$ be
the tangent vector fields along the approximating geodesic. Then
we have
\[
\langle D _XD _XY, Y\rangle\geq 0.
\]
Note $D$ is the covariant derivative defined in Section 2.
Moreover, if we assume that $Y(0, s, \epsilon)=0$, we have at
$t=1$
\[
\langle Y, D_XY\rangle\geq \langle Y, Y\rangle
\]
\end{lemma}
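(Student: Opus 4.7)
The plan is to reduce the lemma to an energy computation and an elementary convexity argument. Introduce the scalar function
\[
E(t)=\|Y(t)\|_{\Phi(t)}^2=\int_M Y^2(1+\triangle\Phi)\,dg,
\]
and use the fact that the formal Levi-Civita connection $D$ is compatible with the metric, which yields the two identities $E'(t)=2\langle D_XY,Y\rangle$ and $E''(t)=2\langle D_X^2Y,Y\rangle+2\|D_XY\|^2$. Thus the first assertion of the lemma, $\langle D_X^2Y,Y\rangle\ge 0$, is equivalent to the differential inequality $E''(t)\ge 2\|D_XY\|^2$.

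To establish this inequality I would compute $E''$ directly. Differentiating and using the explicit form $D_XY=Y_t-(1+\triangle\Phi)^{-1}\nabla\Phi_t\cdot\nabla Y$, one arrives at
\[
E''(t)=\int_M\!\Bigl[2Y_t^2(1+\triangle\Phi)+2YY_{tt}(1+\triangle\Phi)+4YY_t\triangle\Phi_t+Y^2\triangle\Phi_{tt}\Bigr]dg.
\]
The term $Y_{tt}(1+\triangle\Phi)=\Phi_{stt}(1+\triangle\Phi)$ is eliminated by differentiating $Q(D^2\Phi)=\epsilon$ once in $s$, giving $\Phi_{stt}(1+\triangle\Phi)=2\nabla\Phi_t\cdot\nabla Y_t-\Phi_{tt}\triangle Y$. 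A careful integration by parts collapses $4Y\nabla\Phi_t\cdot\nabla Y_t+4YY_t\triangle\Phi_t$ to $-4\int Y_t\nabla Y\cdot\nabla\Phi_t\,dg$, and a second integration by parts collapses $-2Y\Phi_{tt}\triangle Y+Y^2\triangle\Phi_{tt}$ to $2\Phi_{tt}|\nabla Y|^2$. Substituting $\Phi_{tt}=(|\nabla\Phi_t|^2+\epsilon)/(1+\triangle\Phi)$ from the equation and comparing with the expansion of $2\|D_XY\|^2$ term by term, one obtains
\[
E''(t)-2\|D_XY\|^2=\int_M\frac{2\bigl(|\nabla\Phi_t|^2|\nabla Y|^2-(\nabla\Phi_t\cdot\nabla Y)^2\bigr)}{1+\triangle\Phi}\,dg+\int_M\frac{2\epsilon|\nabla Y|^2}{1+\triangle\Phi}\,dg.
\]
Both integrals are non-negative — the first by Cauchy-Schwarz (and in fact equals $-2K_{\Phi_t,Y}$, matching Donaldson's curvature formula), the second because $\epsilon>0$ — so $\langle D_X^2Y,Y\rangle\ge 0$. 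This completes the first part. The main obstacle here is purely bookkeeping: getting the integration-by-parts right so that the residual term takes the clean Cauchy-Schwarz shape.

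For the second assertion I would feed $E''\ge 2\|D_XY\|^2$ into Cauchy-Schwarz $\|D_XY\|^2\,\|Y\|^2\ge\langle D_XY,Y\rangle^2$, which reads $\|D_XY\|^2\,E\ge(E'/2)^2$. Multiplying, $2EE''\ge(E')^2$, which is exactly the inequality $(\sqrt{E})''\ge 0$, i.e.\ $\|Y\|$ is a convex function of $t$ (wherever $E>0$, with the convex extension across any zeros by continuity). Combined with the hypothesis $\|Y(0)\|=\sqrt{E(0)}=0$, convexity of $\|Y\|$ on $[0,1]$ gives $\|Y(0)\|\ge\|Y(1)\|+(\|Y\|)'(1)(0-1)$, i.e.\ $(\|Y\|)'(1)\ge\|Y(1)\|$. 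Translating back via $E'(1)=2\|Y(1)\|(\|Y\|)'(1)=2\langle D_XY,Y\rangle(1)$ yields
\[
\langle D_XY,Y\rangle(1)=\|Y(1)\|\,(\|Y\|)'(1)\ge\|Y(1)\|^2=\langle Y,Y\rangle(1),
\]
which is the desired inequality. The only delicate point is the possibility that $\|Y\|$ vanishes somewhere in $(0,1]$, but such degeneracies are isolated (or $Y$ is identically zero and the claim is trivial), so the convexity conclusion is unaffected.
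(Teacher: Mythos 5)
Your proof is correct and reaches the same key inequality $\tfrac12 E'' \geq \|D_X Y\|^2$ and the same convexity-of-$\sqrt E$ argument as the paper, but you derive the key inequality differently. The paper invokes the abstract Jacobi/commutator structure, writing $\langle D_X D_X Y, Y\rangle = \langle D_Y D_X X, Y\rangle - K(X,Y)$, then computing $D_XX = \epsilon/(1+\triangle\Phi)$ from the perturbed geodesic equation and citing Donaldson's formula $K(X,Y)\le 0$ as a black box. You instead expand $E''(t)$ by brute force, differentiate the equation $Q(D^2\Phi)=\epsilon$ once in $s$ to eliminate $\Phi_{stt}$, and integrate by parts until only the two manifestly non-negative integrands remain; the fact that your first residual term coincides with $-2K(X,Y)$ is then a by-product rather than an input. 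Your route has the small advantage of being self-contained — the non-negativity is visible as a pointwise Cauchy--Schwarz inequality rather than resting on the external curvature computation — while the paper's route is shorter and makes the geometric mechanism (sign of curvature plus sign of $\langle D_Y D_X X, Y\rangle$) explicit. The bookkeeping in your integrations by parts checks out, and the treatment of possible zeros of $\|Y\|$ at the end is handled at the same level of rigor as the paper (the paper also passes silently over this point).
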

\begin{proof}By definition, the length of $Y$ is given by
\[
|Y|^2=\langle Y, Y\rangle=\int_M\Phi_s^2(1+\triangle \Phi)dg.
\]
It follows that
\[
\frac{1}{2}\frac{\p}{\p t}|Y|^2=\langle D_XY, Y\rangle=\langle
D_YX, Y\rangle.
\]
Let $K(X, Y)$ be the sectional curvature of the space $\cH$ at
point $\Phi(t, s, \epsilon).$ By the calculation of Donaldson
\cite{Donaldson2007}, we know that
\[
K(X, Y)\leq 0.
\]
We should emphasize that the calculation in \cite{Donaldson2007}
is algebraic and since $\Phi(t, s, \epsilon)\in \cH$, so we can
use in our setting. Therefore, we have
\begin{eqnarray*}
\frac{1}{2}\frac{\p^2}{\p t^2}|Y|^2&=&\langle D_XY,
D_YX\rangle+\langle D_XD_YX, Y\rangle\\
&=&|D_XY|^2+\langle D_YD_XX, Y\rangle-K(X, Y)\\
&\geq&|D_XY|^2+\langle D_YD_XX, Y\rangle.
\end{eqnarray*}
By definition, it is easy to see that
\[
D_XX=\Phi_{tt}-\frac{1}{1+\triangle \Phi}\langle \nabla \Phi_t,
\nabla \Phi_t\rangle=\frac{\epsilon}{1+\triangle \Phi}.
\]
Also we can get
\begin{eqnarray*}
D_YD_XX=D_Y\frac{\epsilon}{1+\triangle \Phi}&=&\frac{\p}{\p
s}\left(\frac{\epsilon}{1+\triangle
\Phi}\right)-\frac{1}{1+\triangle \Phi}\nabla \Phi_s\cdot \nabla
\left(\frac{\epsilon}{1+\triangle \Phi}\right)\\
&=&-\frac{\epsilon\triangle
\Phi_s}{(1+\triangle\Phi)^2}-\frac{1}{1+\triangle \Phi}\nabla
\Phi_s\cdot \nabla \left(\frac{\epsilon}{1+\triangle \Phi}\right).
\end{eqnarray*}
It follows that
\begin{eqnarray*}
\langle D_YD_XX,
Y\rangle&=&\int_M\Phi_s\left\{-\frac{\epsilon\triangle
\Phi_s}{(1+\triangle\Phi)^2}-\frac{1}{1+\triangle \Phi}\nabla
\Phi_s\cdot \nabla \left(\frac{\epsilon}{1+\triangle
\Phi}\right)\right\}(1+\triangle \Phi)dg\\
&=&\epsilon\int_M\left\{-\triangle
\Phi_s\frac{\Phi_s}{1+\triangle\Phi}-\Phi_s\nabla \Phi_s\nabla
\left(\frac{1}{1+\triangle\Phi}\right)\right\}dg\\
&=&\epsilon\int_M\frac{|\nabla
\Phi_s|^2}{1+\triangle\Phi}dg=\epsilon
\left\langle\frac{\nabla\Phi_s}{1+\triangle\Phi},
\frac{\nabla\Phi_s}{1+\triangle\Phi}\right\rangle\geq 0.
\end{eqnarray*}
In particular, we have
\[
\langle D_XD_XY, Y\rangle=\langle D_YD_XX, Y\rangle-K(X, Y)\geq 0.
\]
It follows that \[\frac{1}{2}\frac{\p^2}{\p t^2}|Y|^2\geq|D_XY|^2.
\]
Note that
\[
\frac{1}{2}\frac{\p^2}{\p t^2}|Y|^2=|Y|\frac{\p ^2}{\p
t^2}|Y|+\left(\frac{\p }{\p t}|Y|\right)^2,
\]
and it is easy to see that \[|D_XY|^2\geq \left(\frac{\p }{\p
t}|Y|\right)^2,\] we get that
\[
\frac{\p^2 }{\p t^2} |Y|\geq 0.
\]
Namely, $|Y|$ is a convex function of $t$. If $Y(0)=0$, it follows
that
\[
\frac{\p}{\p t}|Y(t)|_{t=1}\geq |Y(1)|,
\]
namely at $t=1$, \[ \langle D_XY, Y\rangle\geq \langle Y,
Y\rangle.
\]
\end{proof}
Now we are in the position to prove Theorem \ref{T-7-11}.
\begin{proof} For any $A, B, C\in \cH$, take $\phi_0(s)\equiv A$,
$\phi_1(s)$ to be an approximation of geodesic connecting $B, C$,
then apply Lemma \ref{L-7-7}, we can get a two parameter families
$\Phi(t, s, \epsilon)\in \cH$. Denote $P(s)$ to be the point
$\Phi(1, s, \epsilon)$. Let $E(s)$ be the energy of curve $\Phi(t,
s, \epsilon)$ connection $A$ and $P(s)$. When $\epsilon
\rightarrow 0,$ it is easy to see that $E(s)$ is the square of the
geodesic distance from $A$ to $P(s)$. We have
\[
E(s)=\int_0^1\int_M\Phi_t^2(1+\triangle \Phi)dt.
\]
As in Theorem \ref{T-7-8}, it is easy to get that
\[
\frac{1}{2}\frac{dE}{ds}=\int_M\Phi_t\Phi_s(1+\triangle\Phi)dg|_{t=1}-\epsilon\int_0^1\int_M\Phi_sdg.
\]
Use the notation in Lemma \ref{L-7-12}, we get
\[
\frac{1}{2}\frac{dE}{ds}=\langle X,
Y\rangle_{t=1}-\epsilon\int_0^1\int_M\Phi_sdg.
\]
Then it follows that
\begin{eqnarray*}\frac{1}{2}\frac{d^2E}{ds^2}&=&\frac{d}{ds}\langle X, Y\rangle_{t=1}-\epsilon
\int_0^1\int_M\Phi_{ss}dgdt\\
&=&\langle D_YX, Y\rangle_{t=1}+\langle X, D_YY\rangle_{t=1}
-\epsilon
\int_0^1\int_M\Phi_{ss}dgdt\\
&\geq&\langle Y, Y\rangle_{t=1}+\langle X,
D_YY\rangle_{t=1}-C\epsilon,
\end{eqnarray*}
where we use Lemma \ref{L-7-12}. Note at $t=1$,
\[
D_YY=\frac{\epsilon}{1+\triangle \Phi}
\]
We have
\[
\langle X, D_YY\rangle_{t=1}=\epsilon \int_M\Phi_tdg.
\]
Also we have
\[
\langle Y, Y\rangle=\int_M\Phi_s^2(1+\triangle \Phi)dg\geq
E(\Phi(1, \cdot, \epsilon))-C\epsilon,
\]
where $E(\Phi(1, \cdot, \epsilon))$ is the energy of the path
$\Phi(1, s, \epsilon)$. We get that
\[
\frac{1}{2}\frac{d^2E}{ds^2}\geq E(\Phi(1, \cdot,
\epsilon))-C\epsilon.
\]
It follows that
\[
E(s)\leq (1-s)E(0)+sE(1)-s(1-s)E(\Phi(1, \cdot,
\epsilon))-C\epsilon.
\]
Now fix $s$, let $\epsilon \rightarrow 0$. Each energy element
approaches the square of the length of the path. Thus we get
\[
|AP(s)|^2\leq (1-s)|AB|^2+s|AC|^2-s(1-s)|BC|^2.
\]
\end{proof}

\noindent Xiuxiong Chen\\
xxchen@math.wisc.edu\\
Department of Mathematics\\
University of Wisconsin-Madison\\

\noindent Weiyong He\\
whe@uoregon.edu\\
Department of Mathematics\\
University of Oregon

\end{document}